\documentclass[11pt,english]{amsart}
\usepackage[T1]{fontenc}
\usepackage[latin9]{inputenc}
\usepackage{babel}
\usepackage{verbatim}
\usepackage{amsbsy}
\usepackage{amsthm}
\usepackage{amssymb}
\usepackage{graphicx}
\usepackage[letterpaper]{geometry}
\usepackage[pdfusetitle,
 bookmarks=true,bookmarksnumbered=false,bookmarksopen=false,
 breaklinks=false,pdfborder={0 0 1},backref=false,colorlinks=false]
 {hyperref}
\hypersetup{
 colorlinks=true,linkcolor=teal,citecolor=purple,linktocpage=true}

\makeatletter
\usepackage[inline]{enumitem}
\theoremstyle{plain}
\newtheorem{thm}{\protect\theoremname}[section]
\theoremstyle{remark}
\newtheorem{rem}[thm]{\protect\remarkname}
\theoremstyle{plain}
\newtheorem{cor}[thm]{\protect\corollaryname}
\theoremstyle{definition}
\newtheorem{defn}[thm]{\protect\definitionname}
\theoremstyle{plain}
\newtheorem{question}[thm]{\protect\questionname}
\theoremstyle{plain}
\newtheorem{lem}[thm]{\protect\lemmaname}
\theoremstyle{definition}
\newtheorem{example}[thm]{\protect\examplename}
\theoremstyle{plain}
\newtheorem{prop}[thm]{\protect\propositionname}

\@ifundefined{date}{}{\date{}}
\usepackage{amsmath, tikz-cd, amssymb, placeins}
\usepackage{fnpct}
\usepackage{listings}
\numberwithin{figure}{section}
\numberwithin{equation}{section}
\DeclareRobustCommand*\cal{\@fontswitch\relax\mathcal}

\usetikzlibrary{calc}
\usetikzlibrary{decorations.pathmorphing}

\tikzset{curve/.style={settings={#1},to path={(\tikztostart)
    .. controls ($(\tikztostart)!\pv{pos}!(\tikztotarget)!\pv{height}!270:(\tikztotarget)$)
    and ($(\tikztostart)!1-\pv{pos}!(\tikztotarget)!\pv{height}!270:(\tikztotarget)$)
    .. (\tikztotarget)\tikztonodes}},
    settings/.code={\tikzset{quiver/.cd,#1}
        \def\pv##1{\pgfkeysvalueof{/tikz/quiver/##1}}},
    quiver/.cd,pos/.initial=0.35,height/.initial=0}

\tikzset{tail reversed/.code={\pgfsetarrowsstart{tikzcd to}}}
\tikzset{2tail/.code={\pgfsetarrowsstart{Implies[reversed]}}}
\tikzset{2tail reversed/.code={\pgfsetarrowsstart{Implies}}}
\tikzset{no body/.style={/tikz/dash pattern=on 0 off 1mm}}

\makeatother

\providecommand{\corollaryname}{Corollary}
\providecommand{\definitionname}{Definition}
\providecommand{\examplename}{Example}
\providecommand{\lemmaname}{Lemma}
\providecommand{\propositionname}{Proposition}
\providecommand{\questionname}{Question}
\providecommand{\remarkname}{Remark}
\providecommand{\theoremname}{Theorem}

\begin{document}
\title{Basepoints in Khovanov homology and nonorientable surfaces}
\author{Gheehyun Nahm}
\thanks{The author was partially supported by the ILJU Academy and Culture
Foundation, the Simons collaboration \emph{New structures in low-dimensional
topology}, and a Princeton Centennial Fellowship.}
\address{Department of Mathematics, Princeton University, Princeton, New Jersey
08544, USA}
\email{gn4470@math.princeton.edu}
\begin{abstract}
We enhance the Khovanov TQFT using basepoint actions, over the field
with two elements. Our enhanced Khovanov TQFT behaves similarly to
gauge/Floer theoretic invariants of the double branched cover with
opposite orientation: they both are invariant, in a certain sense,
under taking the connected sum with the standard $\mathbb{RP}^{2}$
with Euler number $-2$, and they both vanish after taking the connected
sum with the standard $\mathbb{RP}^{2}$ with Euler number $2$. This
invariance property answers a version of a question posed by Lipshitz
and Sarkar. Furthermore, our construction establishes, as a special
case, functoriality for the pointed Khovanov homology defined by Baldwin,
Levine, and Sarkar.
\end{abstract}

\maketitle
\tableofcontents{}

\section{\label{sec:Introduction}Introduction}

The Khovanov TQFT \cite{MR1740682,Ja,MWW} is a powerful combinatorial
invariant that assigns to a link $L$ in $S^{3}$ a bigraded group
$Kh(L)$, and assigns to a link cobordism $\Sigma$ in $I\times S^{3}$
a homomorphism $Kh(\Sigma):Kh(L)\to Kh(L')$ that is invariant under
isotoping $\Sigma$ rel.\ $\partial$. Throughout this paper, we
work over the field $\mathbb{F}:=\mathbb{Z}/2\mathbb{Z}$; thus $Kh(L)$
is a bigraded $\mathbb{F}$-vector space, and in particular a surface
$\Sigma$ in $D^{4}$ induces a map $Kh(\Sigma):\mathbb{F}\to Kh(\partial\Sigma)$.

The Khovanov TQFT has been successfully used to study questions about
\emph{orientable} surfaces in certain $4$-manifolds \cite{Ra,swannthesis,MR3189434,MR4076631,MR4541332,MR4562563,MR4726569,hayden2023seifertsurfaces4ball,guth2023doubleddiskssatellitesurfaces,ren2024khovanovv3,teng2025endkhovanovhomologyexotic}.
In contrast, applications of the Khovanov TQFT to \emph{nonorientable}
surfaces are comparatively sparse; notable exceptions include \cite{ballinger2020concordance,MR4504654,hayden2023seifertsurfaces4ball}.
A major difficulty is that the Khovanov link cobordism maps often
vanish for nonorientable surfaces.

In this paper, we construct, using \emph{basepoint actions}, an enhancement
of the Khovanov TQFT (Theorem~\ref{thm:enhanced-khovanov}) that
circumvents this vanishing phenomenon (Theorem~\ref{thm:main-thm}).
Note that our enhancement is also purely combinatorial.

\begin{thm}[Corollary~\ref{cor:Let--be}]
\label{thm:main-thm}Let $K$ be a knot in $S^{3}$ and let $\Sigma_{1},\Sigma_{2}$
be two properly embedded orientable surfaces in $D^{4}$ with boundary
$K$. If $Kh(\Sigma_{1})\neq Kh(\Sigma_{2})$, then for any $N\ge0$,
$\Sigma_{1}\#N\mathbb{RP}^{2}$ and $\Sigma_{2}\#N\mathbb{RP}^{2}$
are distinguished by the enhanced Khovanov link cobordism map, where
$\mathbb{RP}^{2}$ denotes the standard $\mathbb{RP}^{2}$ in $S^{4}$
with Euler number $-2$.
\end{thm}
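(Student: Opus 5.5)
The plan is to deduce the statement from two properties of the enhanced Khovanov TQFT of Theorem~\ref{thm:enhanced-khovanov}: how it compares with the ordinary map, and how it behaves under $\#\mathbb{RP}^{2}$.

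\textbf{Recovering $Kh$ for orientable surfaces.} First I would check that for orientable $\Sigma$, the enhanced map $\widetilde{Kh}(\Sigma)$ recovers $Kh(\Sigma)$. The enhanced theory should assign to a link $L$ a complex over $\mathbb{F}[X]$, with $X$ the basepoint action, or equivalently an enhanced group with $Kh(L)$ as a quotient or associated graded piece. For an orientable (or basepoint-compatible) cobordism, the enhanced map should reduce to $Kh(\Sigma)$ after setting $X=0$ or projecting. Hence $Kh(\Sigma_{1})\neq Kh(\Sigma_{2})$ implies $\widetilde{Kh}(\Sigma_{1})\neq\widetilde{Kh}(\Sigma_{2})$.

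\textbf{Invariance under $\#\mathbb{RP}^{2}$ with Euler number $-2$.} This is the key input, the invariance property announced in the abstract. I would realize $\Sigma\#\mathbb{RP}^{2}$ as $\Sigma$ followed, in a small ball near a point of $K$ or a collar, by a local movie: a Reidemeister~I twist and a nonorientable saddle, then capping, or equivalently a standard Möbius band cobordism composed with its inverse. Since the enhanced theory is functorial, $\widetilde{Kh}(\Sigma\#\mathbb{RP}^{2})=\Phi\circ\widetilde{Kh}(\Sigma)$. Here $\Phi$ is the map induced on $\widetilde{Kh}(K)$ by the closed-up cobordism $(I\times K)\#\mathbb{RP}^{2}$. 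The main computation is to show that $\Phi$ is multiplication by a unit, or by a power of the basepoint variable that acts injectively. I would compute $\Phi$ locally on a one-crossing unknot diagram. There the ordinary Khovanov map vanishes, since the saddle composite is $m\circ\Delta=0$ mod 2. The basepoint correction should contribute the term $X\cdot$(identity) (or the identity itself). The computation is then propagated to arbitrary $K$ by locality, i.e.\ tensoring with the identity on the rest of the diagram.

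\textbf{Conclusion and main obstacle.} Iterating gives $\widetilde{Kh}(\Sigma_{i}\#N\mathbb{RP}^{2})=\Phi^{N}\circ\widetilde{Kh}(\Sigma_{i})$. Injectivity of $\Phi$ then preserves the inequality for all $N\ge0$. The hardest step is verifying that $\Phi$ is injective on $\widetilde{Kh}(K)$ (for instance that $X$ is a non-zero-divisor, or that $\Phi$ is a unit) uniformly for every knot, not just the unknot. I would first try to show $\Phi$ equals the identity up to homotopy, by an explicit chain homotopy through the Reidemeister~I and saddle maps. Failing that, I would fall back on $\Phi=X$ and a freeness statement for the enhanced complex over $\mathbb{F}[X]$. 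Orientation matters: the Euler number $+2$ version should give $\Phi=0$, consistent with the vanishing statement in the abstract, so sign conventions must be tracked carefully.
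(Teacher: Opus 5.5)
There is a genuine gap. The step you yourself call the heart of the argument is left as a guess, and both outcomes you offer for it are impossible, so the injectivity strategy built on them cannot be completed.

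\textbf{The decoration is missing.} The enhanced map is defined on \emph{decorated} cobordisms, and you never say how $\Sigma_i\#N\mathbb{RP}^{2}$ is decorated. This is essential. With the empty decoration, every elementary movie goes to $C_{\mathsf{BN}}(\cdot)\otimes\mathrm{Id}_{R_x\otimes\Xi_x}$, so the enhanced map is just $CKh(\Sigma_i\#N\mathbb{RP}^{2})\otimes\mathrm{Id}\simeq 0$ for $N\ge1$. The paper decorates by $w_1$, the sum of the core $\mathbb{RP}^{1}$'s (the Poincar\'e dual of the first Stiefel--Whitney class). Because this class is intrinsic, any isotopy rel.\ $\partial$ preserves it, and that is what turns ``distinguished by the enhanced map'' into non-isotopy. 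The nontrivial term then comes from the slide of the core circle across the Reidemeister~I crossing, which contributes $H\otimes\xi_x$.

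\textbf{Both of your candidates for $\Phi$ fail.} Theorem~\ref{thm:enhanced-khovanov}~(\ref{enu:144}) holds for all decorated cobordisms. So under the forgetful functor, $\Phi$ must become $CKh((I\times K)\#\mathbb{RP}^{2})\simeq0$, which rules out $\Phi\simeq\mathrm{Id}$. The basepoint action satisfies $X^{2}=0$ and has bidegree $(0,-2)$. It is therefore never a non-zero-divisor on a nonzero module, and nothing here is free over $\mathbb{F}[X]$. The correct local answer (Lemma~\ref{lem:model-rp2-computation}) is that $\Phi$ is multiplication by $\xi_x$. This is the bidegree $(1,2)$ variable of the preferred free resolution $C_{\mathsf{BN}}(T)\otimes R_x\otimes\Xi_x$, i.e.\ the Koszul-dual periodicity class. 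It dies under the forgetful functor because $\xi_x\cdot 1=0$ in $\Xi_x$.

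\textbf{Injectivity is false and not needed.} Take $HKh=H(DKh\otimes^{L}_{\mathbb{F}[x]/(x^{2})}\mathbb{F})$, so $HKh(K)=Kh(K)\otimes\Xi_x$. On this, multiplication by $\xi_x$ kills $Kh(K)\otimes 1$, so it is not injective. The paper instead uses the tensor-product form of the maps. An orientable $\Sigma$ with zero decoration has $\overline{D}Kh(\Sigma)=CKh(\Sigma)\otimes\mathrm{Id}_{R_x\otimes\Xi_x}$. By locality, $\overline{D}Kh(\Sigma\#N\mathbb{RP}^{2},w_1)=CKh(\Sigma)\otimes\mathrm{Id}_{R_x}\otimes\xi_x^{N}$, hence $HKh(\Sigma_i\#N\mathbb{RP}^{2},w_1)=Kh(\Sigma_i)\otimes\xi_x^{N}$. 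Then $(Kh(\Sigma_1)-Kh(\Sigma_2))\otimes\xi_x^{N}\neq0$, simply because a tensor product of nonzero linear maps is nonzero and $\xi_x^{N}$ sends $\xi_x^{-N}$ to $1$.

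Your injectivity intuition survives in only one form: for $\xi_x$ rather than $X$, and only after applying $\mathrm{Hom}(\mathbb{F},-)$ in the derived category. There the relevant $\mathrm{Ext}$ algebra is the polynomial ring $\mathbb{F}[\xi_x]$, and Yoneda multiplication by $\xi_x^{N}$ is injective.
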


\begin{rem}
For all $N\ge1$, the original Khovanov link cobordism map vanishes
for $\Sigma_{i}\#N\mathbb{RP}^{2}$.
\end{rem}

Hayden and Sundberg \cite{MR4726569} proved that the Khovanov link
cobordism maps distinguish certain exotic orientable surfaces in $D^{4}$.
Theorem~\ref{thm:main-thm}, combined with their results, yields
Corollary~\ref{cor:examples}, which we prove in Section~\ref{sec:Standard-'s-with}.
\begin{cor}
\label{cor:examples}All of Hayden and Sundberg's exotic pairs of
surfaces \cite[Theorem 1.1]{MR4726569} remain nonisotopic rel.~$\partial$
after taking the connected sum with arbitrarily many copies of the
standard $\mathbb{RP}^{2}$ with Euler number~$2$.\footnote{There is a difference in Euler number between Theorem~\ref{thm:main-thm}
and Corollary~\ref{cor:examples} because our convention for Khovanov
homology differs from that of Hayden and Sundberg \cite{MR4726569};
see the proof of Corollary~\ref{cor:examples} in Section~\ref{sec:Standard-'s-with}
(compare \cite[Conventions]{nahm2025khovanov}).}
\end{cor}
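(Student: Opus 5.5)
The plan is to reduce Corollary~\ref{cor:examples} to Theorem~\ref{thm:main-thm} through a mirroring argument. Hayden and Sundberg \cite[Theorem 1.1]{MR4726569} produce pairs of properly embedded orientable surfaces $\Sigma_{1},\Sigma_{2}\subset D^{4}$ with the same boundary knot $K$. These surfaces are exotic, meaning topologically but not smoothly isotopic rel.~$\partial$, and they are distinguished by their Khovanov cobordism maps in Hayden--Sundberg's conventions. Our convention for $Kh$ differs from theirs by mirroring. So I first record a dictionary: the Hayden--Sundberg map for a surface $\Sigma\subset D^{4}$ with boundary $K$ agrees, up to the canonical identifications, with our map $Kh(\overline{\Sigma})$. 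Here $\overline{\Sigma}\subset\overline{D^{4}}\cong D^{4}$ is the mirror surface, with boundary the mirror knot $\overline{K}$.

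I will do this translation before reducing mod $2$. Hayden--Sundberg's distinguishing argument may be over $\mathbb{Z}$. I would check that their computation, which evaluates the maps on a specific element or compares them on a class detected mod $2$, still separates the maps over $\mathbb{F}$. I expect this to hold because their examples are distinguished by whether a certain generator is hit, and that property is visible over $\mathbb{F}$. This gives $Kh(\overline{\Sigma_{1}})\neq Kh(\overline{\Sigma_{2}})$ in our convention, with $\overline{\Sigma_{i}}$ orientable and bounding the knot $\overline{K}$.

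Next I apply Theorem~\ref{thm:main-thm} to $\overline{\Sigma_{1}}$ and $\overline{\Sigma_{2}}$. For every $N\ge0$, the surfaces $\overline{\Sigma_{1}}\#N\mathbb{RP}^{2}$ and $\overline{\Sigma_{2}}\#N\mathbb{RP}^{2}$ have different enhanced Khovanov maps, where $\mathbb{RP}^{2}$ has Euler number $-2$. The enhanced map is an invariant of isotopy rel.~$\partial$ by Theorem~\ref{thm:enhanced-khovanov}, so these surfaces are not isotopic rel.~$\partial$.

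Finally I mirror back. Mirroring is an orientation-reversing diffeomorphism of $D^{4}$. It sends the Euler number $-2$ standard $\mathbb{RP}^{2}$ to the Euler number $+2$ one, sends connected sums to connected sums, and sends isotopies rel.~$\partial$ to isotopies rel.~$\partial$. Hence $\Sigma_{1}\#N\mathbb{RP}^{2}_{+2}$ and $\Sigma_{2}\#N\mathbb{RP}^{2}_{+2}$ are not isotopic rel.~$\partial$.

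The main obstacle is the first step: matching conventions precisely. This means confirming that the Hayden--Sundberg distinguishing statement transfers to our convention after mirroring, including the mod $2$ reduction. It also requires confirming that the connected-sum location and the normal Euler number sign flip exactly as claimed.
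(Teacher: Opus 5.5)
Your proposal is correct and follows essentially the same route as the paper. The paper likewise translates Hayden--Sundberg's result into its own convention by mirroring, phrased via the mirrors $m(\Sigma),m(\Sigma'):K\to\emptyset$ rather than your $\overline{\Sigma}$ bounding $\overline{K}$ (an equivalent viewpoint); it then reduces mod $2$, applies Corollary~\ref{cor:Let--be}, and mirrors back to flip the Euler number. The mod $2$ step you leave as an expectation is settled in the paper by the specific fact from \cite[Section~3]{MR4726569} that there is $\phi\in Kh(K;\mathbb{Z})$ with $Kh(m(\Sigma);\mathbb{Z})(\phi)=\pm1$ and $Kh(m(\Sigma');\mathbb{Z})(\phi)=0$, so the mod $2$ reductions give $1\neq 0$ over $\mathbb{F}$.
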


Theorem~\ref{thm:main-thm} also answers a version of a question
that Lipshitz and Sarkar posed in the paper where they gave the first
gauge theory-free proof that pairs of exotic nonorientable surfaces
exist \cite[Theorem~1.2 and Question~6]{MR4504654}. Their strategy
was to start with a specific exotic pair of slice disks $\Sigma_{1},\Sigma_{2}$
such that $Kh(\Sigma_{1})\neq Kh(\Sigma_{2})$ (proved by Hayden and
Sundberg) and to append a nonorientable link cobordism $C:K\to K'$
such that $Kh(C)\circ Kh(\Sigma_{1})\neq Kh(C)\circ Kh(\Sigma_{2})$.
This led them to ask whether this can be done in general, i.e.\ whether
such nonorientable $C$ exists for any pair of disks $\Sigma_{1},\Sigma_{2}$
such that $Kh(\Sigma_{1})\neq Kh(\Sigma_{2})$, and whether $C$ can
be chosen to have crosscap number at least $3$. By Theorem~\ref{thm:main-thm},
for our enhanced TQFT, we can take $C$ to be the connected sum of
the identity cobordism with $N$ copies of the standard $\mathbb{RP}^{2}$
with Euler number $-2$ for any $N\ge1$.

The rest of the introduction is organized as follows. In Subsections~\ref{subsec:An-enhancement-of}~and~\ref{subsec:An-example},
we describe a special case of our enhanced Khovanov TQFT, survey related
results, and give an example where it sees more information than the
original Khovanov TQFT. In Subsection~\ref{subsec:Motivation}, we
compare our enhancement to the original Khovanov TQFT and gauge theoretic
invariants of the double branched cover.

\subsection{\label{subsec:An-enhancement-of}An enhancement of the Khovanov TQFT}

Basepoints on link diagrams give rise to extra structure on the Khovanov
chain complex. Fix $n\ge0$.
\begin{defn}[Generic $n$-pointed links]
View $S^{3}$ as $\mathbb{R}^{3}\cup\{\infty\}$. A link $L\subset S^{3}$
is \emph{generic} if $L$ avoids $\infty\in S^{3}$ and if the projection
$\mathbb{R}^{3}\to\mathbb{R}^{2}$, $(x,y,z)\mapsto(x,y)$, gives
rise to a link diagram of $L$. A \emph{generic $n$-pointed link
}$(L,\vec{p})$ is a generic link $L$ together with an $n$-tuple
$\vec{p}=(p_{1},\cdots,p_{n})$ of points on $L$, away from the crossings.
These points $p_{i}$ are called \emph{basepoints}.
\end{defn}

If $(L,\vec{p})$ is a generic $n$-pointed link, then $\vec{p}$
induces \cite{MR2034399,MR3190305} an $R_{n}$-module structure on
$CKh(L)$, where $R_{n}:=\mathbb{F}[X_{1},\cdots,X_{n}]/(X_{1}^{2},\cdots,X_{n}^{2})$.
This has been studied in various contexts by, for instance, Hedden-Ni
\cite{MR3190305}, Baldwin-Levine-Sarkar \cite{MR3604486}, and Lipshitz-Sarkar
\cite{MR4521052}. Hedden-Ni considered $Kh(L)$ as an $R_{n}$-module,
Baldwin-Levine-Sarkar considered the chain complex $CKh(L)\otimes_{R_{n}}\bigotimes_{i=1}^{n}(R_{n}\xrightarrow{X_{i}}R_{n})$
whose homology they call \emph{pointed Khovanov homology}, and Lipshitz-Sarkar
worked in full generality, by viewing $Kh(L)$ as an $A_{\infty}$-module
over $R_{n}$. (See Appendix~\ref{sec:Comparison-with-previous}
for further discussion.) 

Moreover, each of them showed that the isomorphism type of their invariant
depends only on the isotopy class of the pointed link $(L,\vec{p})$.
To prove this invariance, they defined maps for a minimally required
class of \emph{decorated link cobordisms} $(\Sigma,\vec{A}):(L,\vec{p})\to(L',\vec{p'})$,
which are pairs of a link cobordism $\Sigma:L\to L'$ in $I\times S^{3}$
and an $n$-tuple $\vec{A}=(A_{1},\cdots,A_{n})$ of embedded arcs
$A_{i}\subset\Sigma$ such that $\partial A_{i}=\{p_{i},p_{i}'\}$.
The decorated link cobordisms that they consider are those arising
from performing a Reidemeister move away from the basepoints or sliding
a basepoint across a crossing (we call these \emph{Reidemeister movies}
and \emph{slide movies}, respectively; see Definition~\ref{def:elementary-movies}).

Our contribution is establishing functoriality for these pointed link
invariants (Theorem~\ref{thm:enhanced-khovanov}). Specifically,
we define maps for arbitrary decorated link cobordisms that recover
the existing maps for Reidemeister and slide movies, and prove that
our cobordism maps depend only on the isotopy rel.\ $\partial$ class
of the cobordism.

To make this precise, let $\mathsf{Link}$ be the category of generic
links $L\subset S^{3}$ and isotopy rel.\ $\partial$ classes of
link cobordisms in $I\times S^{3}$. Then, the Khovanov chain-level
TQFT is a functor $CKh:\mathsf{Link}\to K^{b}(\mathsf{Mod}_{\mathbb{F}})$
to the bounded homotopy category of chain complexes of $\mathbb{F}$-modules.
The enhanced TQFT is $DKh:n\mathsf{Link}\to D^{b}(\mathsf{Mod}_{R_{n}})$
where $n\mathsf{Link}$ is the category of generic $n$-pointed links
and isotopy rel.\ $\partial$ classes of decorated link cobordisms,
and $D^{b}(\mathsf{Mod}_{R_{n}})$ is the bounded derived category
of $R_{n}$-modules.

Theorem~\ref{thm:enhanced-khovanov} is our main functoriality statement;
we prove it at the end of Section~\ref{sec:Invariance}. In Section~\ref{sec:remarks}
we discuss the necessity of the derived category and explain an observation
that led us to Theorems~\ref{thm:main-thm}~and~\ref{thm:enhanced-khovanov}.
\begin{thm}[The enhanced Khovanov TQFT]
\label{thm:enhanced-khovanov}There is a functor $DKh:n\mathsf{Link}\to D^{b}(\mathsf{Mod}_{R_{n}})$
such that the following hold.
\begin{enumerate}
\item \label{enu:141}On the object level, $DKh(L,\vec{p})$ is $CKh(L)$
equipped with the $R_{n}$-action induced by $\vec{p}$.
\item \label{enu:144}The decorated link cobordism map $DKh(\Sigma,\vec{A})$
recovers the original map $CKh(\Sigma)$ via the forgetful functor
$D^{b}(\mathsf{Mod}_{R_{n}})\to D^{b}(\mathsf{Mod}_{\mathbb{F}})\cong K^{b}(\mathsf{Mod}_{\mathbb{F}})$.
\item \label{enu:142}$DKh(\Sigma,\vec{A})$ only depends on the isotopy
rel.\ $\partial$ class of $\Sigma$ and the mod $2$ homology classes
$[A_{i},\partial A_{i}]\in H_{1}(\Sigma,\{p_{i},p_{i}'\};\mathbb{F})$.
\item \label{enu:For-the-decorated}If $(\Sigma,\vec{A})$ is a Reidemeister
or slide movie, then $DKh(\Sigma,\vec{A})$ recovers the maps defined
by Hedden-Ni \cite{MR3190305}, Baldwin-Levine-Sarkar \cite{MR3604486},
and Lipshitz-Sarkar \cite{MR4521052}.
\end{enumerate}
\end{thm}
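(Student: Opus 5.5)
The plan is to build $DKh$ from a movie presentation of decorated cobordisms and then check Carter--Saito-type movie moves, augmented by moves that involve the arcs. First I decompose any decorated link cobordism $(\Sigma,\vec{A})$ into elementary decorated movies. These are Reidemeister movies away from the basepoints, slide movies (a basepoint crossing a strand of a crossing), Morse moves (births, deaths, saddles) supported away from the basepoints, and a final kind of move that moves a basepoint along its own component past a point where the arc $A_i$ could wind. After a generic isotopy, $A_i$ is transverse to the height function. So each arc $A_i$ traces a path of basepoints $p_i(t)$ on the levels $L_t$, except at finitely many times where $A_i$ is tangent to a level. There a basepoint "jumps" across a saddle or is created or annihilated in pairs, and I convert these events into slides through a saddle.

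For each elementary movie I assign a morphism in $D^b(\mathsf{Mod}_{R_n})$. For Reidemeister and slide movies I use the Hedden--Ni, Baldwin--Levine--Sarkar and Lipshitz--Sarkar maps, which proves item~(4). For Morse moves away from basepoints, the usual Khovanov maps are already $R_n$-linear, since the $X_i$ act by multiplication at a point disjoint from the saddle.

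The key new ingredient is the map attached to moving a basepoint $p_i$ to a point $q$ on the same component. The actions $X_p$ and $X_q$ are not equal on $CKh$. They are homotopic via the standard homotopy $H$, where $X_p+X_q=dH+Hd$ comes from the crossings between $p$ and $q$. However, $H$ is not $R_n$-linear, and $H^2$ need not vanish. So I would realize the identification as a zig-zag through a bimodule/mapping-cone resolution, i.e.\ an $A_\infty$ or derived map; this is exactly where $D^b$ rather than $K^b$ is needed. Concretely, I would replace $CKh(L)$ by its $R_n$-projective resolution. The point move then becomes the quasi-isomorphism induced by viewing $CKh$ as a module over $\mathbb{F}[X_p,X_q]/(X_p^2,X_q^2)$ up to homotopy. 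Item~(2) follows because under the forgetful functor every map I use is the ordinary Khovanov map: the point moves forget to the identity.

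Invariance is the main step. I would check the following. (a) Carter--Saito movie moves with basepoints far away follow from Khovanov functoriality [MWW/Ja], together with $R_n$-linearity when the maps are genuine chain maps, plus a lemma that a homotopy between $R_n$-linear chain maps which is itself a Khovanov map is $R_n$-linear. (b) Moves involving basepoints: commuting a basepoint slide with a distant move; two slide movies composing to a point move; and the isotopy of an arc across a saddle or a cap. (c) Isotopy of the arcs rel endpoints, and more generally change of $A_i$ within its mod $2$ relative homology class, which gives item~(3). For (c), changing $A_i$ by a loop $\gamma\subset\Sigma$ corresponds to pushing a basepoint once around a closed curve. I must show the resulting automorphism of $DKh$ depends only on $[\gamma]\in H_1(\Sigma;\mathbb{F})$. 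For loops that run around a component of a level link this reduces to the identity (the "monodromy" of moving $p$ fully around a component is trivial in $D^b$ over $\mathbb{F}$, presumably via the Lipshitz--Sarkar $A_\infty$ computation). For loops through saddles I would use the fact that the composition decomposes into Morse-supported pieces.

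I expect this monodromy and homological-dependence check to be the main obstacle. The maps are only defined up to derived homotopy, so coherence must be verified at the level of higher homotopies, not just chain homotopies. I would first try to reduce every basepoint move to a single canonical local model: a basepoint on a trivial arc next to one crossing, or next to one saddle. On that model, $\mathrm{Hom}_{D^b}$ is small enough to compute directly, and uniqueness follows by showing the relevant self-map space is one-dimensional in the correct bigrading.
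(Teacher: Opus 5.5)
Your outline (elementary decorated movies, then movie moves, arc moves and a homology check) has the right overall shape. However, it omits the ingredients that actually carry the invariance proof, and it partly misplaces the difficulty.

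The slide map is not the problem. For a single crossing, Lemma~\ref{lem:homotopy-identities} gives $H_cx=x'H_c$ and $H_c^2=0$. So on the fixed model $\overline{C}=C\otimes_{\mathbb F}R_{\boldsymbol X}\otimes_{\mathbb F}\Xi_{\boldsymbol X}$, the map $\mathrm{Id}+H\otimes\mathrm{Id}\otimes\xi_y$ is a strict $R_{\boldsymbol X}$-linear chain map with a strict inverse. Every elementary movie is then a strict chain map between preferred resolutions. Since $D^b$ is a $1$-category, you only ever need $R_{\boldsymbol X}$-linear chain homotopies; zig-zags, $A_\infty$ maps and higher coherences are unnecessary.

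The genuinely missing idea is \emph{locality plus rigidity}. Your instinct to reduce to local models with one-dimensional self-map spaces is right. But it cannot be carried out in $D^b(\mathsf{Mod}_{R_n})$ of link complexes: that category has no gluing, and its bidegree-$(0,0)$ endomorphisms are not rigid. You need two things.
\begin{itemize}
\item A tangle-level theory compatible with planar composition and with the $R$-actions, namely $\overline{D}_{\mathsf{BN}}\colon\mathsf{Tang}_{\boldsymbol e,\boldsymbol X}\to K^-_{R_{\boldsymbol X}}(\mathsf{BN}_{\boldsymbol e})$.
\item An $R$-linear version of Bar-Natan's simplicity lemma. For a pairing $T$, $\mathrm{End}(C_{\mathsf{BN}}(T))$ is concentrated in $q\le 0$. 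A bidegree-$(0,0)$ endomorphism of $C_{\mathsf{BN}}(T)\otimes R_{\boldsymbol X}\otimes\Xi_{\boldsymbol X}$ is determined by its $\xi^0$-part, because only $1\in\Xi_{\boldsymbol X}$ is killed by every $\xi_x$. Simplicity then persists under adding boundary crossings and under isotopies (Proposition~\ref{prop:dkh-simple}).
\end{itemize}
With these, every arc move other than far commutation and crossing a Morse locus lies over a Reidemeister movie, or over $I\times T$ with $T$ a crossingless $2$-ended or one-crossing $4$-ended tangle. This includes the one hard case, Reidemeister~III past a slide, and the homology moves $\mathrm{K}_{)(}$ and $\mathrm{K}_\circ$. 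Both sides are then homotopy equivalences between simple objects, hence homotopic.

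There are three further gaps.

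(i) Arcs cannot in general be made monotone in time: there may be no time-increasing path in the Reeb graph of $\Sigma$ from the component of $p_i$ to that of $p_i'$. Intermediate levels then carry several points of one color, and ``converting these events into slides through a saddle'' is not a construction. The fix is to allow a finite set of basepoints per color, with $x$ acting by their sum. Then cap, cup and swap movies are the identity, since adjacent same-colored points give $p+q=0$. This also makes immersed decorations, hence mod~$2$ homology classes, natural. But local pieces now share colors, so gluing local checks needs the color-collapsing equivalence $\overline{\sigma}$ of Proposition~\ref{prop:sigma-cheq}. That is a nontrivial algebraic input, proved via the self-inverse change of basis $\psi$.

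(ii) You never treat the sweep-around move. It is required because morphisms are isotopy classes in $I\times S^3$. In general it cannot be made basepoint-free: the swept strand passes over the whole diagram of $T$, including its basepoints, so the sweep contains slide movies. One must rerun the Morrison--Walker--Wedrich external-grading argument, using that the grading-preserving part of each slide map is the identity.

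(iii) Your claim that pushing a basepoint once around a level component is trivial is unjustified, and item~(3) does not require it. Such a loop is in general nonzero in $H_1(\Sigma,\{p_i,p_i'\};\mathbb F)$ (e.g.\ for $\Sigma=I\times L$). What is needed is invariance under homotopy of the arcs, which is a local matter, plus the two local homology moves.

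Likewise, your proposed lemma that a Khovanov homotopy between $R_n$-linear maps is $R_n$-linear is false. For example, $H_c$ is a cobordism-built homotopy between $p_1$ and $p_2$, yet $H_cp_1=p_2H_c\neq p_1H_c$ in general. What is true is this: first isotope the arcs off the support of a movie move, which is possible because those surfaces are unions of disks. Then the local Bar-Natan homotopy is the identity outside the support and commutes with the basepoint actions.
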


\begin{rem}
In Section~\ref{sec:Pointed-links-and} we construct $DKh$ for a
more general class of decorated link cobordisms $(\Sigma,\vec{A})$,
where, in particular, we allow $A_{i}$ to be any immersed $1$-manifold.
Later, we specialize this general construction and define a functor
(Definition~\ref{def:hkh}), which we call $HKh$, from the category
of generic links $L$ and isotopy rel.\ $\partial$ classes of link
cobordisms $\Sigma:L\to L'$ decorated by a class $w\in H_{1}(\Sigma;\mathbb{F})$.
On objects we have
\[
HKh(L)=Kh(L)\otimes_{\mathbb{F}}\mathbb{F}[\xi,\xi^{-1}]/\xi\mathbb{F}[\xi]
\]
where $\xi$ is a formal variable with bidegree $(1,2)$. If $\Sigma$
is a link cobordism, then 
\[
HKh(\Sigma\#N\mathbb{RP}^{2},w)=Kh(\Sigma)\otimes_{\mathbb{F}}\xi^{N}
\]
where $w$ is the sum of the homology classes of the core $\mathbb{RP}^{1}$s
of the $N$ $\mathbb{RP}^{2}$ summands, and $\xi^{N}:\mathbb{F}[\xi,\xi^{-1}]/\xi\mathbb{F}[\xi]\to\mathbb{F}[\xi,\xi^{-1}]/\xi\mathbb{F}[\xi]$
denotes multiplication by $\xi^{N}$.
\end{rem}

\subsection{\label{subsec:An-example}The standard $\mathbb{RP}^{2}$ with Euler
number $-2$}

\begin{figure}[h]
\begin{centering}
\includegraphics[scale=2.2]{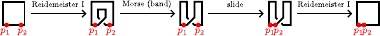}
\par\end{centering}
\caption{\label{fig:rp2-schematic}A movie for a decorated cobordism that corresponds
to the standard $\mathbb{RP}^{2}$ with Euler number $-2$.}
\end{figure}

Let us give a simple, representative example of how $DKh(\Sigma,\vec{A})$
contains strictly more information than $CKh(\Sigma)$. Consider the
decorated cobordism $(\Sigma,(A_{1},A_{2})):(U,(p_{1},p_{2}))\to(U,(p_{1},p_{2}))$
of Figure~\ref{fig:rp2-schematic}, where $U\subset S^{3}$ is the
unknot. Note that $\Sigma$ is obtained from the standard $\mathbb{RP}^{2}$
with Euler number $-2$ by removing two disks. The Khovanov chain
complex $CKh(U)$ is homotopy equivalent to $\mathbb{F}\oplus\mathbb{F}x$
with trivial differential, and $CKh(\Sigma):\mathbb{F}\oplus\mathbb{F}x\to\mathbb{F}\oplus\mathbb{F}x$
is identically zero.

However, it turns out that $DKh(\Sigma,(A_{1},A_{2}))\in\mathrm{Hom}_{D^{b}(\mathsf{Mod}_{R_{2}})}(CKh(U),CKh(U))$
is nonzero. Since this map lives in the bounded derived category $D^{b}(\mathsf{Mod}_{R_{2}})$
of $R_{2}=\mathbb{F}[X_{1},X_{2}]/(X_{1}^{2},X_{2}^{2})$-modules,
it may not come from a chain map $CKh(U)\to CKh(U)$ (indeed, it does
not), but if $F^{\bullet}$ is a free $R_{2}$-resolution of $CKh(U)$,
then $DKh(\Sigma,(A_{1},A_{2}))$ comes from a chain map $F^{\bullet}\to F^{\bullet}$.

Hence, let us find such an $F^{\bullet}$. As $R_{2}$-chain complexes,
$CKh(U)$ is $R_{2}/(X_{1}+X_{2})$ with trivial differential, and
so the following is a free $R_{2}$-resolution of $CKh(U)$: 
\[
\cdots\xrightarrow{X_{1}+X_{2}}h^{-2}q^{-4}R_{2}\xrightarrow{X_{1}+X_{2}}h^{-1}q^{-2}R_{2}\xrightarrow{X_{1}+X_{2}}R_{2}.
\]
It turns out that the map $DKh(\Sigma,(A_{1},A_{2}))$ is the following
dashed map (our convention is that chain maps need not preserve the
homological grading; see Subsection~\ref{subsec:Conventions}):
\[\begin{tikzcd}[ampersand replacement=\&]
	{\cdots } \& {h^{-2}q^{-4}R_2 } \& {h^{-1}q^{-2}R_2 } \& {R_2} \& {\simeq CKh(U)} \\
	{\cdots } \& {h^{-2}q^{-4}R_2 } \& {h^{-1}q^{-2}R_2 } \& {R_2 } \& {\simeq CKh(U)}
	\arrow["{X_1 +X_2 }", from=1-1, to=1-2]
	\arrow["1"{description}, dashed, from=1-1, to=2-2]
	\arrow["{X_1 +X_2 }", from=1-2, to=1-3]
	\arrow["1"{description}, dashed, from=1-2, to=2-3]
	\arrow["{X_1 +X_2 }", from=1-3, to=1-4]
	\arrow["1"{description}, dashed, from=1-3, to=2-4]
	\arrow["{DKh(\Sigma,(A_{1},A_{2}))}", dashed, from=1-5, to=2-5]
	\arrow["{X_1 +X_2 }", from=2-1, to=2-2]
	\arrow["{X_1 +X_2 }", from=2-2, to=2-3]
	\arrow["{X_1 +X_2 }", from=2-3, to=2-4]
\end{tikzcd}\]

This map is nonzero in $\mathrm{Hom}_{D^{b}(\mathsf{Mod}_{R_{2}})}(R_{2}/(X_{1}+X_{2}),R_{2}/(X_{1}+X_{2}))$.
One way to see this is by tensoring with $\mathbb{F}[X]/X^{2}$ where
$X_{1},X_{2}\in R_{2}$ act as $X$, and then taking homology; this
gives the following.
\[\begin{tikzcd}[ampersand replacement=\&,column sep=tiny]
	{\cdots } \& {h^{-2}q^{-4} \mathbb F[X]/X^2 } \& {h^{-1}q^{-2}\mathbb F[X]/X^2 } \& {\mathbb F[X]/X^2 } \& {\cong H( CKh(U) {\otimes}_{R_{2}}^L\mathbb{F}[X]/X^{2})} \\
	{\cdots } \& {h^{-2}q^{-4} \mathbb F[X]/X^2 } \& {h^{-1}q^{-2}\mathbb F[X]/X^2 } \& {\mathbb F[X]/X^2 } \& {\cong H( CKh(U) {\otimes}_{R_{2}}^L \mathbb{F}[X]/X^{2})}
	\arrow["1"{description}, dashed, from=1-2, to=2-3]
	\arrow["1"{description}, dashed, from=1-3, to=2-4]
	\arrow["{H(DKh(\Sigma,(A_{1},A_{2})) {\otimes}_{R_{2}}^L \mathbb{F}[X]/X^{2})}", dashed, from=1-5, to=2-5]
\end{tikzcd}\]
\begin{rem}[Standard $\mathbb{RP}^{2}$ with Euler number $2$]
If instead $\Sigma$ came from the unknotted $\mathbb{RP}^{2}$ with
Euler number $2$, i.e.\ if the crossings of Figure~\ref{fig:rp2-schematic}
were the opposite crossings, then the induced map $DKh(\Sigma,(A_{1},A_{2}))$
is zero.
\end{rem}

\subsection{\label{subsec:Motivation}Motivation}

Khovanov homology is closely related to gauge and Floer theoretic
invariants of the orientation reverse of the double branched cover
of $S^{3}$ along $L$, as highlighted by the various spectral sequences
\cite{MR2141852,MR2764887,MR3394316,daemi2015abeliangaugetheoryknots}
that relate these. In gauge theory and Floer theory, there exist interesting
invariants for \emph{closed} $4$-manifolds, such as the Donaldson
and Seiberg-Witten invariants \cite{MR892034,DONALDSON1990257,MR1293681,MR1306869,MR1306021}
as well as the Ozsv\'{a}th-Szab\'{o} \emph{mixed invariant} \cite{MR2222356,MR4337438},
which is a Heegaard Floer analogue of the Seiberg-Witten invariant
(compare \cite{Kronheimer_Mrowka_2007}). Inspired by this mixed invariant,
Lipshitz and Sarkar \cite{MR4504654} defined \emph{mixed invariants}
in Khovanov homology for certain nonorientable link cobordisms.

One would hope \cite[Question~7]{MR4504654} that the Khovanov mixed
invariant behaves similarly to the Seiberg-Witten invariant and the
mixed invariant of the orientation reverse of the double branched
cover. However, the Khovanov link cobordism maps behave fundamentally
differently from gauge and Floer theoretic cobordism maps of the orientation
reverse of the double branched cover, even in the simplest case. Gauge
and Floer theoretic invariants are invariant, in a certain sense,
under taking the connected sum with $\overline{\mathbb{CP}^{2}}$,
and they vanish after taking the connected sum with $\mathbb{CP}^{2}$.
Taking the connected sum of a $4$-manifold with $\overline{\mathbb{CP}^{2}}$
(resp.\ $\mathbb{CP}^{2}$) corresponds to taking the connected sum
of a link cobordism $\Sigma$ with the standard $\mathbb{RP}^{2}$
with normal Euler number $-2$ (resp.\ $2$).\footnote{Let $X$ be the orientation reverse of the double branched cover of
$I\times S^{3}$ along $\Sigma$. Then, the orientation reverse of
the double branched cover of $I\times S^{3}$ along the connected
sum $\Sigma\#\mathbb{RP}^{2}$ of $\Sigma$ and the standard $\mathbb{RP}^{2}$
with Euler number $-2$ (resp.\ $2$) is $X\#\overline{\mathbb{CP}^{2}}$
(resp.\ $X\#\mathbb{CP}^{2}$).} However, the Khovanov link cobordism map, and in fact consequently
the Khovanov mixed invariant, for $\Sigma\#\mathbb{RP}^{2}$ is identically
zero for both Euler numbers $\pm2$.

Our enhanced Khovanov TQFT behaves similarly to gauge and Floer theoretic
invariants of the orientation reverse of the double branched cover:
both are invariant, in a certain sense, under taking the connected
sum with the standard $\mathbb{RP}^{2}$ with Euler number $-2$,
and they vanish after taking the connected sum with the standard $\mathbb{RP}^{2}$
with Euler number $2$.

We hope that our enhancement of the Khovanov TQFT can be used to define
a Khovanov mixed invariant that behaves more similarly to gauge and
Floer theoretic invariants; Question~\ref{que:mixed-inv} is a motivating
open question. Note that gauge theoretic invariants of the double
branched cover have been used to distinguish certain exotic closed
surfaces in $S^{4}$ \cite{MR903734,MR970078}.
\begin{question}
\label{que:mixed-inv}Is it possible to define a mixed invariant from
the Khovanov TQFT that distinguishes exotic closed surfaces in $S^{4}$?
\end{question}

\subsection{Organization}

In Section~\ref{sec:Preliminaries} we review Bar-Natan's tangle
invariant \cite{BN1} and the basepoint sliding homotopy. In Section~\ref{sec:Pointed-links-and},
we define the enhanced Khovanov TQFT, and we show that it is well-defined
in Section~\ref{sec:Invariance}. In Section~\ref{sec:Standard-'s-with},
we carry out a model computation for the standard $\mathbb{RP}^{2}$
with Euler number $-2$ and prove Theorem~\ref{thm:main-thm} and
Corollary~\ref{cor:examples}. In Section~\ref{sec:remarks} we
discuss the necessity of working in the derived category and explain
an observation that led us to our main theorems. In Appendix~\ref{sec:collapsing-colors-proof}
we prove an algebraic statement that we use in Section~\ref{sec:Invariance}.
In Appendix~\ref{sec:Comparison-with-previous} we prove Theorem~\ref{thm:enhanced-khovanov}~(\ref{enu:For-the-decorated}).

\subsection{\label{subsec:Conventions}Conventions}

The Khovanov chain complex is $(h,q)$-bigraded. The differential
has bigrading $(1,0)$, and the basepoint actions have bigrading $(0,-2)$.
In this context, we say homology (resp.\ chain complex) to mean cohomology
(resp.\ cochain complex).

If $M$ is an $R$-module and $r\in R$, then we denote multiplication
by $r$ also as $r:M\to M$.

All chain complexes $C^{\bullet}$ are bounded above, i.e.\ $C^{n}=0$
for $n\gg0$.

If $C,D$ are $R$-chain complexes and $f,g:C\to D$ are $R$-chain
maps, then $f\sim g$ means that they are $R$-chain homotopic.

Chain maps need not be homogeneous with respect to the homological
grading. Hence, the morphisms of e.g.\ $K^{b}(\mathsf{Mod}_{\mathbb{F}})$
and $D^{b}(\mathsf{Mod}_{R})$ need not be homogeneous. In other words,
e.g.\ if $\mathrm{Hom}_{D^{b}(\mathsf{Mod}_{R})}^{0}(C,D)$ denotes
the $R$-module of morphisms that preserve the homological grading,
then $\mathrm{Hom}_{D^{b}(\mathsf{Mod}_{R})}(C,D)=\bigoplus_{n\in\mathbb{Z}}\mathrm{Hom}_{D^{b}(\mathsf{Mod}_{R})}^{0}(C,h^{n}D)$
where $h^{n}$ denotes a shift in homological grading by $n$.

\subsection*{Acknowledgements}

We thank Robert Lipshitz and Peter Ozsv\'{a}th for their continuous
support and helpful discussions. We thank William Ballinger, Ines
Borchers Arias, Kunal Chawla, Glen Lim, Yuta Nakayama, Jacob Rasmussen,
Qiuyu Ren, Sucharit Sarkar, Evan Scott, Joshua Wang, and Hongjian
Yang for helpful discussions. We thank Robert Lipshitz and Sucharit
Sarkar for discussions about the sweep-around move. The author used
ChatGPT and Gemini for wording suggestions and proofreading.

\section{\label{sec:Preliminaries}Preliminaries}

In this section, we set up notations for Bar-Natan's tangle invariant
\cite{BN1} and review the basepoint sliding homotopy. For simplicity,
we only consider oriented tangles in $D^{3}=D^{2}\times[-1,1]$. Tangle
cobordisms are in $I\times D^{3}$; we call the $I$ direction the
\emph{time} direction. Let $\boldsymbol{e}\subset D^{2}\times\{0\}\subset D^{2}\times[-1,1]=D^{3}$
be an oriented $0$-manifold.

\subsection{\label{subsec:Bar-Natan's-tangle-invariant}Bar-Natan's tangle invariant}
\begin{defn}[The category $\mathsf{Tang}_{\boldsymbol{e}}$]
An oriented tangle $T\subset D^{3}$ with endpoints $\boldsymbol{e}$
is \emph{generic} if the projection $D^{2}\times[-1,1]\to D^{2}$
gives rise to a tangle diagram of $T$. The category $\mathsf{Tang}_{\boldsymbol{e}}$
has objects generic oriented tangles with endpoints $\boldsymbol{e}$
and morphisms isotopy rel.\ $\partial$ classes of tangle cobordisms
$\Sigma:T\to T'$ in $I\times D^{3}$. Tangle cobordisms need not
be orientable, and need not be compatible with the orientations of
$T,T'$.
\end{defn}

Bar-Natan's tangle invariant is a functor $C_{\mathsf{BN}}:\mathsf{Tang}_{\boldsymbol{e}}\to K^{b}(\mathsf{BN}_{\boldsymbol{e}})$;
let us explain our conventions for $\mathsf{BN}_{\boldsymbol{e}}$.
Let $\mathcal{C}ob_{\bullet/l}^{3}(\boldsymbol{e})$ be Bar-Natan's
preadditive category \cite{BN1} of dotted planar cobordisms modulo
certain relations. The $\mathrm{Hom}$ groups of $\mathcal{C}ob_{\bullet/l}^{3}(\boldsymbol{e})$
are quantum $\mathbb{Z}$-graded and the quantum grading-shifted objects
are formally added to $\mathcal{C}ob_{\bullet/l}^{3}(\boldsymbol{e})$.
Let $\mathcal{C}ob_{\bullet/l}^{3}(\boldsymbol{e})\otimes_{\mathbb{Z}}\mathbb{F}$
be the quantum $\mathbb{Z}$-graded $\mathbb{F}$-linear category
whose objects are the same as $\mathcal{C}ob_{\bullet/l}^{3}(\boldsymbol{e})$
and 
\[
\mathrm{Hom}_{\mathcal{C}ob_{\bullet/l}^{3}(\boldsymbol{e})\otimes_{\mathbb{Z}}\mathbb{F}}(X,Y):=\mathrm{Hom}_{\mathcal{C}ob_{\bullet/l}^{3}(\boldsymbol{e})}(X,Y)\otimes_{\mathbb{Z}}\mathbb{F}.
\]

\begin{defn}
The\emph{ Bar-Natan category} $\mathsf{BN}_{\boldsymbol{e}}$ is the\emph{
additive enlargement }of $\mathcal{C}ob_{\bullet/l}^{3}(\boldsymbol{e})\otimes_{\mathbb{Z}}\mathbb{F}$.\footnote{Bar-Natan denotes the additive enlargement of a preadditive category
$\mathsf{A}$ as $\mathrm{Mat}(\mathsf{A})$.}
\end{defn}

Our convention for the additive enlargement is closer to \cite[Section~3k]{MR2441780}
than to \cite{BN1}: if $\mathsf{A}$ is a quantum $\mathbb{Z}$-graded
$\mathbb{F}$-linear category, then the objects of its additive enlargement
$\Sigma\mathsf{A}$ are triples $X=(I,\{X_{i}\},\{V_{i}\})$ written
as $X=\bigoplus_{i\in I}X_{i}\otimes_{\mathbb{F}}V_{i}$ where $I$
is a finite set, $\{X_{i}\}_{i\in I}$ is a family of objects of $\mathsf{A}$,
and $\{V_{i}\}_{i\in I}$ is a family of quantum $\mathbb{Z}$-graded
vector spaces. The morphism spaces are 
\[
\mathrm{Hom}_{\Sigma\mathsf{A}}\left(\bigoplus_{i\in I}X_{i}\otimes_{\mathbb{F}}V_{i},\bigoplus_{j\in J}Y_{j}\otimes_{\mathbb{F}}W_{j}\right):=\bigoplus_{i\in I,j\in J}\mathrm{Hom}_{\mathsf{A}}(X_{i},Y_{j})\otimes_{\mathbb{F}}\mathrm{Hom}_{\mathbb{F}}(V_{i},W_{j}).
\]
Note that the morphism spaces are quantum $\mathbb{Z}$-graded. Composition
is defined by combining the composition of maps in $\mathsf{A}$ and
maps between vector spaces.

Recall \cite[Section~5]{BN1} that an oriented planar arc diagram
$D$ with input endpoints $\boldsymbol{e}_{1},\cdots,\boldsymbol{e}_{d}$
and output endpoints $\boldsymbol{e}$ induces functors 
\[
\mathsf{Tang}_{\boldsymbol{e}_{1}}\times\cdots\times\mathsf{Tang}_{\boldsymbol{e}_{d}}\to\mathsf{Tang}_{\boldsymbol{e}}\ \mathrm{and}\ K^{b}(\mathsf{BN}_{\boldsymbol{e}_{1}})\times\cdots\times K^{b}(\mathsf{BN}_{\boldsymbol{e}_{d}})\to K^{b}(\mathsf{BN}_{\boldsymbol{e}}).
\]
These functors are associative and commute with $C_{\mathsf{BN}}:\mathsf{Tang}_{\boldsymbol{f}}\to K^{b}(\mathsf{BN}_{\boldsymbol{f}})$.
For simplicity of notation, for chain complexes $C_{i}\in K^{b}(\mathsf{BN}_{\boldsymbol{e}_{i}})$,
we denote the image of $(C_{1},\cdots,C_{d})$ under the second functor
as $C_{1}\otimes\cdots\otimes C_{d}$, suppressing $D$ from the notation.

\subsection{\label{subsec:The-basepoint-action}The basepoint action and the
basepoint sliding homotopy}

Let $T\in\mathsf{Tang}_{\boldsymbol{e}}$, let $C$ be the set of
crossings of $T$, and for $\varepsilon\in\{0,1\}^{C}$ let $T_{\varepsilon}\in\mathsf{BN}_{\boldsymbol{e}}$
be the $\varepsilon$-resolution. Recall that the chain complex $C_{\mathsf{BN}}(T)\in K^{b}(\mathsf{BN}_{\boldsymbol{e}})$,
ignoring the differential, is the direct sum of the $T_{\varepsilon}$'s
(with various bigrading shifts).
\begin{defn}[Basepoint action and basepoint sliding homotopy \cite{MR2034399,MR3190305,MR3604486,MR4521052}]
Let $p\in T$ be a point away from the crossings. For each $\varepsilon$-resolution
$T_{\varepsilon}\in\mathsf{BN}_{\boldsymbol{e}}$ of $T$, let $f_{\varepsilon}\in\mathrm{Hom}_{\mathsf{BN}_{\boldsymbol{e}}}(T_{\varepsilon},T_{\varepsilon})$
be the identity cobordism $\Sigma_{\varepsilon}:=[0,1]\times T_{\varepsilon}$
together with a dot at $(0.5,p)\in\Sigma_{\varepsilon}$. The \emph{basepoint
action} $p:C_{\mathsf{BN}}(T)\to C_{\mathsf{BN}}(T)$, which we also
denote as $p$ by abuse of notation, is the sum of the $f_{\varepsilon}$'s.

Let $c\in C$ be a crossing of $T$. For each pair $\varepsilon_{0},\varepsilon_{1}\in\{0,1\}^{C}$
such that they agree on $C\setminus\{c\}$ and $\varepsilon_{0}(c)=0$,
$\varepsilon_{1}(c)=1$, let $g_{\varepsilon_{1},\varepsilon_{0}}\in\mathrm{Hom}_{\mathsf{BN}_{\boldsymbol{e}}}(T_{\varepsilon_{1}},T_{\varepsilon_{0}})$
be the saddle cobordism corresponding to $c$. The \emph{basepoint
sliding homotopy} $H_{c}:C_{\mathsf{BN}}(T)\to C_{\mathsf{BN}}(T)$
is the sum of the $g_{\varepsilon_{1},\varepsilon_{0}}$'s.
\end{defn}

Note that $p:C_{\mathsf{BN}}(T)\to C_{\mathsf{BN}}(T)$ has bigrading
$(0,-2)$, and $H_{c}$ has bigrading $(-1,-2)$. Lemma~\ref{lem:homotopy-identities}
explains the name\emph{ basepoint sliding homotopy}.

\begin{figure}[h]
\begin{centering}
\includegraphics[scale=5]{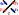}
\par\end{centering}
\caption{\label{fig:crossing}A crossing $c$ and points $p_{1},p_{2},q_{1},q_{2}$}
\end{figure}

\begin{lem}[{\cite[Lemma~2.3]{MR3190305}, \cite[Lemma~2.3]{MR3604486}, \cite[Theorem~4.2]{MR4521052}}]
\label{lem:homotopy-identities}Let $T\in\mathsf{Tang}_{\boldsymbol{e}}$,
let $c$ be a crossing of $T$, let points $p_{1},p_{2},q_{1},q_{2}$
be as in Figure~\ref{fig:crossing}, and let $r\in T$ be a point
away from the crossings. Then the following identities of maps $C_{\mathsf{BN}}(T)\to C_{\mathsf{BN}}(T)$
hold on the chain level (i.e.\ not merely up to homotopy):
\begin{gather*}
\partial H_{c}+H_{c}\partial=p_{1}+p_{2}=q_{1}+q_{2},\ H_{c}r=rH_{c},\ H_{c}^{2}=0,\\
H_{c}p_{1}=p_{2}H_{c},\ H_{c}p_{2}=p_{1}H_{c},\ H_{c}q_{1}=q_{2}H_{c},\ H_{c}q_{2}=q_{1}H_{c}.
\end{gather*}
\end{lem}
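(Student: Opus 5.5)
The plan is to reduce everything to a local computation at the single crossing $c$. Write $C=\{c\}\sqcup C'$. Then $C_{\mathsf{BN}}(T)$ is the mapping cone of the saddle $s$ from the $0$-resolution to the $1$-resolution at $c$, tensored (via the planar arc diagram) with the cube over $C'$. The differential splits as $\partial=\partial_{c}+\partial'$. Here $\partial_{c}$ is the sum of the saddles $T_{\varepsilon_{0}}\to T_{\varepsilon_{1}}$ across $c$, and $\partial'$ is the sum of the saddles across the other crossings. The operator $H_{c}$ is the sum of the reverse saddles $T_{\varepsilon_{1}}\to T_{\varepsilon_{0}}$. All cobordisms involved are supported either in a small ball $B$ around $c$ or outside it. Morphisms supported in disjoint regions commute on the nose in $\mathcal{C}ob^{3}_{\bullet/l}$: this is a planar-algebra (isotopy) statement, so no relations are needed. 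Signs disappear because we work over $\mathbb{F}$.

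First, the commutation identities. $H_{c}$ is supported in $B$ and maps $\varepsilon(c)=1$ summands to $\varepsilon(c)=0$ summands. It therefore commutes with $\partial'$ and with $r$ for any $r$ outside $B$; moving $r$ along $T$ away from $B$ does not change the dot class, since dots slide along identity sheets. Hence $H_{c}r=rH_{c}$. Also $H_{c}^{2}=0$, because $H_{c}$ lowers $\varepsilon(c)$ from $1$ to $0$ and cannot be applied twice.

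Next, the identities $H_{c}p_{1}=p_{2}H_{c}$ and the like. These hold because the reverse saddle is an isotopy class of surfaces in which the sheet through $p_{1}$ in the $1$-resolution becomes, after the saddle, the sheet containing $p_{2}$. This requires the labeling in Figure~\ref{fig:crossing}: $p_{1},p_{2}$ are diagonal points and lie on the same arc in one resolution and on different arcs in the other. So one checks on the two resolutions of the crossing which endpoints are joined, and observes that a dot may be slid along the connected component of the saddle surface containing it.

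Finally, the main identity. We have $\partial H_{c}+H_{c}\partial=\partial_{c}H_{c}+H_{c}\partial_{c}+(\partial'H_{c}+H_{c}\partial')$, and the last bracket vanishes mod $2$ by the commutation above. On a $0$-summand, $\partial_{c}H_{c}=0$ and $H_{c}\partial_{c}$ is saddle followed by reverse saddle. On a $1$-summand, the roles are swapped. Each composite is the identity cobordism on the local two-arc resolution with a tube (neck) attached. Bar-Natan's neck-cutting relation over $\mathbb{F}$ replaces the neck by the sum of the two surfaces with a dot on one side or the other. These are exactly the dots at $p_{1}$ and $p_{2}$, or equally at $q_{1}$ and $q_{2}$, since $\{p_{1},q_{1}\}$ and $\{p_{2},q_{2}\}$ lie on the two local sheets after cutting. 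Thus $\partial H_{c}+H_{c}\partial=p_{1}+p_{2}=q_{1}+q_{2}$.

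I expect the main obstacle to be the bookkeeping of which points lie on which sheet after neck-cutting, in each of the two resolutions. The consistency check is that the answer $p_{1}+p_{2}$ is resolution-independent. That requires $p_{1},p_{2}$ to sit on different sheets of the cut surface in both resolutions, which the figure's configuration (points on opposite strands of the crossing) guarantees.
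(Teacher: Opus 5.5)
Your argument is correct and is essentially the standard proof the paper relies on. The paper gives no proof of its own; it cites Hedden--Ni, Baldwin--Levine--Sarkar and Lipshitz--Sarkar, whose argument is this local check: far-commutativity discards the other crossings, and neck-cutting on the saddle/reverse-saddle composite at $c$ produces $p_1+p_2$.

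Correct your configuration bookkeeping, though. $p_1,p_2$ lie on the \emph{same} strand on opposite sides of $c$ (diagonal), and diagonal points lie on \emph{different} local arcs in \emph{both} resolutions. Points on opposite strands would share an arc in one resolution, where $p_1+p_2$ would vanish and the main identity would fail. Also, the identities $H_cp_1=p_2H_c$, etc., need no labeling at all: the local saddle is a single connected disk through all four points, so $H_cp_i$, $p_jH_c$, $H_cq_k$ and $q_lH_c$ all coincide.
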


\section{\label{sec:Pointed-links-and}Pointed tangles and decorated cobordisms}

We consider more general kinds of decorations on the link cobordism
$\Sigma$ than in Section~\ref{sec:Introduction}; compare \cite{MR3905679}.
As a warmup, we first define the enhanced Khovanov TQFT $DKh$ for
links in Subsection~\ref{subsec:dkh-links}; this will serve as good
motivation for our definition of the enhanced Khovanov TQFT $\overline{D}_{\mathsf{BN}}$
for tangles in Subsection~\ref{subsec:dkh-tangles}. More precisely,
in this section we define them on the object level and for \emph{elementary
movies} (Definition~\ref{def:elementary-movies}); we prove well-definedness
in Section~\ref{sec:Invariance}.

First, let us set the stage. Let $\boldsymbol{X}$ be a finite set,
and let $R_{\boldsymbol{X}}:=\mathbb{F}[\boldsymbol{X}]/(\{x^{2}\}_{x\in\boldsymbol{X}})$:
i.e.\ if $\boldsymbol{X}=\{x_{1},\cdots,x_{n}\}$, then $R_{\boldsymbol{X}}:=\mathbb{F}[x_{1},\cdots,x_{n}]/(x_{1}^{2},\cdots,x_{n}^{2})$.
\begin{defn}[The categories $\mathsf{Tang}_{\boldsymbol{e},\boldsymbol{X}}$ and
$\mathsf{Link}_{\boldsymbol{X}}$]
Let $\boldsymbol{e}\subset D^{2}\times\{0\}\subset D^{2}\times[-1,1]=D^{3}$
be an oriented $0$-manifold. A \emph{generic $\boldsymbol{X}$-pointed
tangle with endpoints $\boldsymbol{e}$} is a tuple $(T,(\boldsymbol{p}_{x})_{x\in\boldsymbol{X}})$
of a generic oriented tangle $T\subset D^{3}$ with endpoints $\boldsymbol{e}$
and a pairwise disjoint collection $(\boldsymbol{p}_{x})_{x\in\boldsymbol{X}}$
of finite sets of points $\boldsymbol{p}_{x}\subset T$ away from
the crossings. Note that $\boldsymbol{p}_{x}$ may be empty. We call
elements $p\in\boldsymbol{p}_{x}$ \emph{basepoints}, and say that
\emph{$p$ has color $x$}. We denote $\vec{\boldsymbol{p}}:=(\boldsymbol{p}_{x})_{x\in\boldsymbol{X}}$
for simplicity.

An\emph{ $\boldsymbol{X}$-decorated tangle cobordism} from $(T,\vec{\boldsymbol{p}})$
to $(T',\vec{\boldsymbol{p}}')$ is a tuple $(\Sigma,(A_{x})_{x\in\boldsymbol{X}})$
of a tangle cobordism $\Sigma:T\to T'$ in $I\times D^{3}$ and a
collection $(A_{x})_{x\in\boldsymbol{X}}$ of properly immersed one-manifolds
$\varphi_{x}:A_{x}\to\Sigma$ that restrict to a bijection $\varphi_{x}|_{\partial A_{x}}:\partial A_{x}\to\boldsymbol{p}_{x}\sqcup\boldsymbol{p}_{x}'$.
Note that $A_{x}$ may be empty. We denote $\vec{A}:=(A_{x})_{x\in\boldsymbol{X}}$
for simplicity.

The category $\mathsf{Tang}_{\boldsymbol{e},\boldsymbol{X}}$ has
objects generic $\boldsymbol{X}$-pointed tangles with endpoints $\boldsymbol{e}$,
and morphisms $\boldsymbol{X}$-decorated tangle cobordisms where
we identify $(\Sigma,\vec{A})$ and $(\Sigma',\vec{A'})$ if $\Sigma$
and $\Sigma'$ are isotopic rel.\ $\partial$ in $I\times D^{3}$
and for all $x\in\boldsymbol{X}$ the mod $2$ homology classes $[A_{x},\partial A_{x}]$
and $[A_{x}',\partial A_{x}']$ are equal. The category $\mathsf{Link}_{\boldsymbol{X}}$
has objects generic $\boldsymbol{X}$-pointed tangles with no endpoints,
and morphisms $\boldsymbol{X}$-decorated tangle cobordisms where
we identify $(\Sigma,\vec{A})$ and $(\Sigma',\vec{A'})$ if $\Sigma$
and $\Sigma'$ are isotopic rel.\ $\partial$ in $I\times S^{3}$
and $[A_{x},\partial A_{x}]=[A_{x}',\partial A_{x}']$ for all $x\in\boldsymbol{X}$.

We often identify a morphism $(\Sigma,\vec{A})$ of $\mathsf{Tang}_{\boldsymbol{e},\boldsymbol{X}}$
or $\mathsf{Link}_{\boldsymbol{X}}$ with $(\Sigma,\vec{w})$ where
$\vec{w}=(w_{x})_{x\in\boldsymbol{X}}$ and $w_{x}=[A_{x},\partial A_{x}]\in H_{1}(\Sigma,\boldsymbol{p}_{x}\sqcup\boldsymbol{p}_{x}';\mathbb{F})$.
\end{defn}

Let $(\Sigma,\vec{A})$ be an $\boldsymbol{X}$-decorated tangle cobordism.
Recall that by applying a small isotopy to $\Sigma$ rel.\ $\partial$,
we can arrange $\Sigma$ to be a composition of\emph{ elementary undecorated
movies} (we add the adjective \emph{undecorated} to avoid confusion),
which consist of (i) cobordisms that come from isotopies of generic
tangles (we call these \emph{planar isotopies}), (ii) undecorated
Reidemeister movies, and (iii) undecorated Morse movies. Now, by applying
a small isotopy rel.\ $\partial$ to $\vec{A}$, we can arrange $(\Sigma,\vec{A})$
such that it is a composition of \emph{elementary movies}, defined
below.

\begin{figure}[h]
\begin{centering}
\includegraphics[scale=2.5]{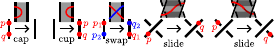}
\par\end{centering}
\caption{\label{fig:elementary-moves}Schematics of elementary movies. The
pairs of basepoints $(p,q)$, $(p_{1},q_{1})$, and $(p_{2},q_{2})$
are each the same color, but $p_{1}$ and $p_{2}$ need not have the
same color. The cap, cup, and swap movies are supported away from
the crossings.}
\end{figure}

We say that an object $Z$ is \emph{vertical} in $[a,b]\times D$
for some $D\subset D^{3}$ if $Z\cap([a,b]\times D)=[a,b]\times Y$
for some $Y$.
\begin{defn}[Elementary movies]
\label{def:elementary-movies}A \emph{Reidemeister} (resp.\ \emph{Morse})
\emph{movie} is a decorated tangle cobordism $(\Sigma,\vec{A})$ such
that $\Sigma$ is an undecorated Reidemeister (resp.\ Morse) movie,
and $\vec{A}$ is vertical and is disjoint from the region where the
undecorated movie happens.

A \emph{cap} (resp.\ \emph{cup}, \emph{swap}, \emph{slide}) movie
is a decorated tangle cobordism $(\Sigma,\vec{A})$ such that $\Sigma$
is vertical, $(\Sigma,\vec{A})$ agrees with the cap (resp.\ cup,
swap, slide) movie of Figure~\ref{fig:elementary-moves} in some
region, and $\vec{A}$ is vertical outside that region.

An \emph{elementary movie} is an $\boldsymbol{X}$-decorated tangle
cobordism $(\Sigma,\vec{A})$ that is one of the following:
\begin{enumerate*}
\item \label{enu:movie1}a tangle cobordism that comes from an isotopy of
generic $\boldsymbol{X}$-pointed tangles; we call these \emph{planar
isotopies},
\item \label{enu:movie2}a Reidemeister movie,
\item \label{enu:movie3}a Morse movie,
\item \label{enu:movie4}a cap movie,
\item \label{enu:movie5}a cup movie,
\item \label{enu:movie6}a swap movie, and
\item \label{enu:movie7}a slide movie.
\end{enumerate*}
\end{defn}

\subsection{\label{subsec:dkh-links}The enhanced Khovanov TQFT for links}

Let us define $DKh:\mathsf{Link}_{\boldsymbol{X}}\to D^{b}(\mathsf{Mod}_{R_{\boldsymbol{X}}})$.
\begin{defn}[$DKh$ on objects]
\label{def:dkh-objects}Let $(L,\vec{\boldsymbol{p}})$ be a generic
$\boldsymbol{X}$-pointed link. The $R_{\boldsymbol{X}}$-chain complex
$DKh(L,\vec{\boldsymbol{p}})$ is the chain complex $CKh(L)$ equipped
with the $R_{\boldsymbol{X}}$-module structure such that $x\in\boldsymbol{X}$
acts as $\sum_{p\in\boldsymbol{p}_{x}}p$.
\end{defn}

Hence, we are left to define maps $DKh(L,\vec{\boldsymbol{p}})\to DKh(L',\vec{\boldsymbol{p}}')$
in $D^{b}(\mathsf{Mod}_{R_{\boldsymbol{X}}})$ for the seven elementary
movies $(\Sigma,\vec{A})$. For (\ref{enu:movie2}) and (\ref{enu:movie3}),
the decoration $\vec{A}$ is vertical, and for (\ref{enu:movie4})-(\ref{enu:movie6}),
$\Sigma$ is vertical and $DKh(L,\vec{\boldsymbol{p}})=DKh(L',\vec{\boldsymbol{p}}')$.
Hence for (\ref{enu:movie1})-(\ref{enu:movie6}), the usual Khovanov
chain map $CKh(\Sigma):CKh(L)\to CKh(L')$ is $R_{\boldsymbol{X}}$-linear;
define the map $DKh(\Sigma,\vec{A})$ to be this map. Note that for
(\ref{enu:movie4})-(\ref{enu:movie6}), these maps are the identity
map.

Unlike (\ref{enu:movie1})-(\ref{enu:movie6}), the maps for slide
movies will only be defined as morphisms in $D^{b}(\mathsf{Mod}_{R_{\boldsymbol{X}}})$.
For $R_{\boldsymbol{X}}$-chain complexes $C,C'$, to define a morphism
$C\to C'$ in $D^{b}(\mathsf{Mod}_{R_{\boldsymbol{X}}})$, we work
with \emph{preferred free $R_{\boldsymbol{X}}$-resolutions} $q_{C}:\overline{C}\to C$
and $q_{C'}:\overline{C}'\to C'$, and we define an $R_{\boldsymbol{X}}$-chain
map $\overline{C}\to\overline{C}'$. Let us first define our\emph{
preferred free $R_{\boldsymbol{X}}$-resolution }of a general $R_{\boldsymbol{X}}$-chain
complex $C$.
\begin{example}[{Preferred free resolutions over $\mathbb{F}[x]/(x^{2})$}]
\label{exa:preferred-resolution}Let $C$ be a bounded above $\mathbb{F}$-chain
complex, and let $f:C\to C$ be a chain map such that $f^{2}=0$.
The map $f$ equips $C$ with an $\mathbb{F}[x]/(x^{2})$-module structure
where $x$ acts as $f$; denote as $C_{f}$ this $\mathbb{F}[x]/(x^{2})$-chain
complex. Then, our preferred $\mathbb{F}[x]/(x^{2})$-free resolution
of $C_{f}$ is 
\[\begin{tikzcd}[ampersand replacement=\&]
	\cdots \& {\xi ^{-2}C} \& {\xi ^{-1}C} \& C \&\& \\
	\cdots \& {q^{-2} \xi ^{-2}C} \& { q^{-2} \xi ^{-1} C} \& {q^{-2} C} \&\& \textcolor{rgb,255:red,92;green,92;blue,214}{{C_f}}
	\arrow["f"{description}, from=1-1, to=1-2]
	\arrow["1"{description}, from=1-1, to=2-2]
	\arrow["f"{description}, from=1-2, to=1-3]
	\arrow["1"{description}, from=1-2, to=2-3]
	\arrow["f"{description}, from=1-3, to=1-4]
	\arrow["1"{description}, from=1-3, to=2-4]
	\arrow[draw=none, from=1-4, to=2-4]
	\arrow["1"{description}, color={rgb,255:red,92;green,92;blue,214}, dashed, from=1-4, to=2-6]
	\arrow["f"{description}, from=2-1, to=2-2]
	\arrow["f"{description}, from=2-2, to=2-3]
	\arrow["f"{description}, from=2-3, to=2-4]
	\arrow["f"{description}, color={rgb,255:red,92;green,92;blue,214}, dashed, from=2-4, to=2-6]
\end{tikzcd}\]where $\xi$ is a formal variable with bidegree $(1,2)$, and $x$
acts by sending the first row to the second row via the identity.
The $\mathbb{F}[x]/(x^{2})$-linear quasi-isomorphism to $C_{f}$
is in blue.
\end{example}

We generalize Example\ \ref{exa:preferred-resolution} to chain complexes
over $R_{\boldsymbol{X}}$.
\begin{defn}
Let $\Xi_{x}:=\mathbb{F}[\xi_{x},\xi_{x}^{-1}]/\xi_{x}\mathbb{F}[\xi_{x}]$
where $\xi_{x}$ has bidegree $(1,2)$, and let $F_{x}$ be the free
$\mathbb{F}[x]/(x^{2})$-chain complex $(\mathbb{F}[x]/(x^{2}))\otimes_{\mathbb{F}}\Xi_{x}$
with differential $\partial(r\otimes\xi):=(xr)\otimes(\xi_{x}\xi)$,
i.e.\ 
\[
F_{x}=\cdots\xrightarrow{x}\xi_{x}^{-2}\mathbb{F}[x]/(x^{2})\xrightarrow{x}\xi_{x}^{-1}\mathbb{F}[x]/(x^{2})\xrightarrow{x}\mathbb{F}[x]/(x^{2}).
\]

Let $\Xi_{\boldsymbol{X}}:=\bigotimes_{x\in\boldsymbol{X}}\Xi_{x}$
and $F_{\boldsymbol{X}}:=\bigotimes_{x\in\boldsymbol{X}}F_{x}$ where
the tensor products are taken over $\mathbb{F}$. I.e.\ $F_{\boldsymbol{X}}=R_{\boldsymbol{X}}\otimes_{\mathbb{F}}\Xi_{\boldsymbol{X}}$
with differential $\partial(r\otimes\xi)=\sum_{x\in\boldsymbol{X}}(xr)\otimes(\xi_{x}\xi)$.
Note that $F_{\boldsymbol{X}}$ is a free $R_{\boldsymbol{X}}$-resolution
of $\mathbb{F}=R_{\boldsymbol{X}}/(\{x\}_{x\in\boldsymbol{X}})$.
\end{defn}

\begin{defn}[Preferred free $R_{\boldsymbol{X}}$-resolutions]
\label{def:preferred}The \emph{preferred free $R_{\boldsymbol{X}}$-resolution}
$\overline{C}$ of a bounded above $R_{\boldsymbol{X}}$-chain complex
$C$ is $\overline{C}:=C\otimes_{\mathbb{F}}R_{\boldsymbol{X}}\otimes_{\mathbb{F}}\Xi_{\boldsymbol{X}}$
where $R_{\boldsymbol{X}}$ acts on the middle $R_{\boldsymbol{X}}$,
and the differential is given by 
\[
\partial(c\otimes r\otimes\xi)=(\partial c)\otimes r\otimes\xi+\sum_{x\in\boldsymbol{X}}((xc)\otimes r+c\otimes(xr))\otimes(\xi_{x}\xi).
\]
The\emph{ preferred $R_{\boldsymbol{X}}$-quasi-isomorphism} (see
Lemma \ref{lem:quasi-iso}) $q_{C}:\overline{C}=C\otimes_{\mathbb{F}}R_{\boldsymbol{X}}\otimes_{\mathbb{F}}\Xi_{\boldsymbol{X}}\to C$
is given by $c\otimes r\otimes1\mapsto rc$ and $c\otimes r\otimes\xi\mapsto0$
for all monomials $\xi\neq1\in\Xi_{\boldsymbol{X}}$.

Define the functor $\mathsf{Free}:D^{b}(\mathsf{Mod}_{R_{\boldsymbol{X}}})\to D^{-}(\mathsf{Mod}_{R_{\boldsymbol{X}}})$
as $C\mapsto\overline{C}$ on objects; on morphisms, map $f:C\to D$
to $q_{D}^{-1}\circ f\circ q_{C}:\overline{C}\to\overline{D}$.
\end{defn}

\begin{rem}[Change of basis]
\label{rem:change-basis-1}The $R_{\boldsymbol{X}}$-chain complex
$\overline{C}$ is $R_{\boldsymbol{X}}$-isomorphic to the following
chain complex. Consider the usual $R_{\boldsymbol{X}}\otimes_{\mathbb{F}}R_{\boldsymbol{X}}$-chain
complex $C\otimes_{\mathbb{F}}F_{\boldsymbol{X}}$ with differential
$\partial_{C}\otimes\mathrm{Id}_{F_{\boldsymbol{X}}}+\mathrm{Id}_{C}\otimes\partial_{F_{\boldsymbol{X}}}$.
Now, consider the $\mathbb{F}$-algebra map $\Delta:R_{\boldsymbol{X}}\to R_{\boldsymbol{X}}\otimes_{\mathbb{F}}R_{\boldsymbol{X}}$
given by $x\mapsto1\otimes x+x\otimes1$ for $x\in\boldsymbol{X}$.
This makes $C\otimes_{\mathbb{F}}F_{\boldsymbol{X}}$ into an $R_{\boldsymbol{X}}$-chain
complex. We show that this chain complex is $R_{\boldsymbol{X}}$-isomorphic
to $\overline{C}$ in Lemma~\ref{lem:change-basis}, in a more general
setting.

The $R_{\boldsymbol{X}}$-quasi-isomorphism $C\otimes_{\mathbb{F}}F_{\boldsymbol{X}}\to C$
is as follows. View $\mathbb{F}$ as an $R_{\boldsymbol{X}}$-module
by identifying $R_{\boldsymbol{X}}/(\{x\}_{x\in\boldsymbol{X}})=\mathbb{F}$,
and consider the $R_{\boldsymbol{X}}$-quasi-isomorphism $F_{\boldsymbol{X}}\to\mathbb{F}$
given by $0$ on homological grading $\neq0$ and the quotient map
$R_{\boldsymbol{X}}\to\mathbb{F}$ on homological grading $0$. This
induces an $R_{\boldsymbol{X}}\otimes_{\mathbb{F}}R_{\boldsymbol{X}}$-quasi-isomorphism
$C\otimes_{\mathbb{F}}F_{\boldsymbol{X}}\to C\otimes_{\mathbb{F}}\mathbb{F}$.
Now, the $R_{\boldsymbol{X}}$-action on $C\otimes_{\mathbb{F}}\mathbb{F}=C$,
which is induced by $\Delta$ and the $R_{\boldsymbol{X}}\otimes_{\mathbb{F}}R_{\boldsymbol{X}}$-action
on $C\otimes_{\mathbb{F}}\mathbb{F}$, agrees with the $R_{\boldsymbol{X}}$-action
on $C$. Hence, the quasi-isomorphism $C\otimes_{\mathbb{F}}F_{\boldsymbol{X}}\to C\otimes_{\mathbb{F}}\mathbb{F}$
is $R_{\boldsymbol{X}}$-linear.
\end{rem}

\begin{lem}
\label{lem:quasi-iso}The above map $q_{C}:\overline{C}\to C$ is
an $R_{\boldsymbol{X}}$-quasi-isomorphism.
\end{lem}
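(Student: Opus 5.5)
We must show three things: that $q_C$ is a chain map, that it is $R_{\boldsymbol{X}}$-linear, and that it induces an isomorphism on homology. The first two are direct checks from the definitions, and the third follows from a filtration argument that reduces to the acyclicity of a Koszul-type complex.

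\textbf{Linearity and the chain map property.} Linearity is immediate, since $R_{\boldsymbol{X}}$ acts on the middle factor and $q_C(c\otimes r\otimes 1)=rc$. For the chain map property, let $\xi=1$. Then
\[
\partial(c\otimes r\otimes1)=(\partial c)\otimes r\otimes 1+\sum_x\big((xc)\otimes r+c\otimes xr\big)\otimes\xi_x .
\]
Applying $q_C$ kills every term with $\xi_x\neq 1$, leaving $r\partial c=\partial(rc)$, as required. Now let $\xi\neq 1$ be a monomial. The component of $\partial(c\otimes r\otimes\xi)$ in $\Xi$-degree $1$ vanishes: multiplying $\xi$ by $\xi_x$ in $\Xi_{\boldsymbol{X}}$ never produces $1$, since exponents are $\le 0$ and multiplication by $\xi_x$ either raises an exponent toward $0$ or kills the term. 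The only way to reach $1$ from $\xi=\xi_x^{-1}$ is $\xi_x\cdot\xi_x^{-1}=1$, which gives the term $\big((xc)\otimes r+c\otimes xr\big)\otimes 1$. Its image under $q_C$ is $r\,xc+xr\,c=2xrc=0$ over $\mathbb{F}$. So $q_C\partial=\partial q_C$ holds in all cases.

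\textbf{Quasi-isomorphism.} I would use the change of basis in Remark~\ref{rem:change-basis-1}, which identifies $\overline{C}\cong C\otimes_{\mathbb{F}}F_{\boldsymbol{X}}$ and $q_C$ with $\mathrm{Id}_C\otimes\epsilon$, where $\epsilon:F_{\boldsymbol{X}}\to\mathbb{F}$ is the augmentation. Since Lemma~\ref{lem:change-basis} appears later, I would either check that isomorphism directly via $c\otimes r\otimes\xi\mapsto \sum(\cdots)$ using $\Delta$, or avoid it as follows. Filter $\overline{C}$ by the homological degree of $C$. The associated graded is $C\otimes(R_{\boldsymbol{X}}\otimes\Xi_{\boldsymbol{X}})$ with differential
\[
\sum_x \big(c\otimes xr + (xc)\otimes r\big)\xi_x .
\]
The $xc$ term preserves $C$-degree, because the basepoint action has bidegree $(0,-2)$, so it survives in the associated graded. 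To handle it, I would filter instead by the number of $C$-factors acted on. Alternatively, and more cleanly, I would apply the change of basis $c\otimes r\mapsto$ (use $x\otimes1+1\otimes x$) explicitly: the $\mathbb{F}$-linear automorphism $c\otimes r\mapsto \sum_{S\subseteq \mathrm{supp}}\dots$ of $C\otimes R_{\boldsymbol{X}}$ given by $c\otimes \prod_{x\in S}x\mapsto \prod_{x\in S}(x\otimes1+1\otimes x)(c\otimes1)$ intertwines the two structures. After this, the complex is $C\otimes F_{\boldsymbol{X}}$ with the tensor product differential.

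\textbf{Conclusion via the Künneth formula.} Over the field $\mathbb{F}$, the Künneth formula gives $H(C\otimes F_{\boldsymbol{X}})\cong H(C)\otimes H(F_{\boldsymbol{X}})$. We have $H(F_{\boldsymbol{X}})=\mathbb{F}$, because each $F_x$ resolves $\mathbb{F}$ (its homology is $\ker x/\operatorname{im}x=0$ in negative degrees and $\mathbb{F}$ in degree $0$), and the tensor product over $\mathbb{F}$ of these resolutions resolves $\mathbb{F}$. Boundedness above of $C$ ensures the double complex converges. Finally, $q_C$ corresponds to $\mathrm{Id}\otimes\epsilon$, so it is a quasi-isomorphism.

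The main obstacle is the change-of-basis step: checking that the explicit automorphism is well defined with $x^2=0$ (which holds in characteristic $2$ since $(x\otimes1+1\otimes x)^2=0$) and that it carries $q_C$ to $\mathrm{Id}\otimes\epsilon$.
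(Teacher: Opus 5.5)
Your proof is correct, but the quasi-isomorphism step follows a different route from the paper.

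\textbf{The paper's route.} The paper never untwists the differential. For a single color $x$, it filters $M=\mathrm{Cone}(q_C)$ so that every associated graded piece is $\mathrm{Cone}(C\xrightarrow{\mathrm{Id}}C)$, and concludes that $M$ is acyclic. It then handles $|\boldsymbol{X}|>1$ by induction: it identifies $\overline{C}_{\boldsymbol{X}}$ with the preferred $R_x$-resolution of $\overline{C}_{\boldsymbol{Y}}$, so that $q_{C,\boldsymbol{X}}=q_{C,\boldsymbol{Y}}\circ q_{C,\boldsymbol{Y},x}$.

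\textbf{Your route.} Your automorphism $c\otimes\prod_{x\in S}x\mapsto\prod_{x\in S}(x\otimes1+1\otimes x)(c\otimes1)$ is exactly the map $\psi$ of Lemma~\ref{lem:change-basis}. That map carries $\overline{C}$ to the untwisted complex $C\otimes_{\mathbb{F}}F_{\boldsymbol{X}}$. Your identification of $q_C$ is also right: on $c\otimes x_{\boldsymbol{Z}}\otimes1$ the composite $q_C\circ\psi$ gives $\sum_{\boldsymbol{Y}\subset\boldsymbol{Z}}x_{\boldsymbol{Y}}x_{\boldsymbol{Z}\setminus\boldsymbol{Y}}c=2^{|\boldsymbol{Z}|}x_{\boldsymbol{Z}}c$, so $q_C\circ\psi=\mathrm{Id}_C\otimes\epsilon$. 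Your argument is thus the viewpoint of Remark~\ref{rem:change-basis-1} turned into a proof. It is not circular, since Lemma~\ref{lem:change-basis} does not use the present lemma.

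\textbf{What each buys.} Your approach handles all colors at once and explains conceptually why $\overline{C}$ resolves $C$. However, it rests entirely on the change-of-basis verification, which you flag but do not carry out; the paper does it in Appendix~\ref{sec:collapsing-colors-proof}. The relevant checks are that $\psi$ intertwines $\partial+\phi$ with $\partial$ and that it is invertible (it is an involution). Well-definedness is not an issue, since $\psi$ is defined on a basis. The paper's filtration argument, by contrast, is self-contained at this point and avoids Künneth.

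\textbf{Minor corrections.}
\begin{enumerate}
\item Drop the abandoned filtration by $C$-degree and the vague \emph{number of $C$-factors acted on}; they play no role.
\item In $\Xi_x$ one has $\xi_x\cdot 1=0$, so the $\xi_x$-terms of $\partial(c\otimes r\otimes 1)$ vanish outright rather than being killed by $q_C$. Your conclusion is unaffected.
\item Künneth over $\mathbb{F}$ for the direct-sum tensor product needs no boundedness hypothesis.
\end{enumerate}
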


\begin{proof}
It is clear that $q_{C}$ is $R_{\boldsymbol{X}}$-linear. To show
that it is a quasi-isomorphism, we induct on $|\boldsymbol{X}|$.
The base case is $\boldsymbol{X}=\{x\}$. Consider the filtration
of $\overline{C}=C\otimes_{\mathbb{F}}R_{x}\otimes_{\mathbb{F}}\Xi_{x}$
given by 
\[
F^{-n}\overline{C}:=\bigoplus_{0\le i\le n}C\otimes\mathbb{F}\otimes\xi_{x}^{-i}\mathbb{F}\oplus\bigoplus_{0\le i\le n-1}C\otimes x\mathbb{F}\otimes\xi_{x}^{-i}\mathbb{F}.
\]
Let $M:=\mathrm{Cone}(\overline{C}\xrightarrow{q_{C}}C)$, and consider
the filtration $0:=F^{1}M\subset F^{0}M\subset F^{-1}M\subset\cdots$
given by $F^{-n}M:=(F^{-n}\overline{C})\oplus C$ for $n\ge0$. For
each $n\ge0$, the chain complex $F^{-n}M/F^{-n+1}M$ is $\mathbb{F}$-isomorphic
to $\mathrm{Cone}(C\xrightarrow{\mathrm{Id}}C)$, and hence has trivial
homology. Thus $M$ also has trivial homology since for each $h\in\mathbb{Z}$,
there exists some $n=n(h)$ such that the homological grading $h$
summand of $M$ is contained in $F^{n}M$.

For the induction step, let $\boldsymbol{X}=\boldsymbol{Y}\sqcup\{x\}$,
and let $q_{C,\boldsymbol{Y}}:\overline{C}_{\boldsymbol{Y}}\to C$
(resp.\ $q_{C,\boldsymbol{X}}:\overline{C}_{\boldsymbol{X}}\to C$)
be the preferred free $R_{\boldsymbol{Y}}$- (resp.\ $R_{\boldsymbol{X}}$-)resolution
of $C$ viewed as an $R_{\boldsymbol{Y}}$- (resp.\ $R_{\boldsymbol{X}}$-)chain
complex. Then $\overline{C}_{\boldsymbol{Y}}$ is an $R_{x}$-chain
complex; let $q_{C,\boldsymbol{Y},x}:\overline{C}_{\boldsymbol{Y},x}\to\overline{C}_{\boldsymbol{Y}}$
be its preferred $R_{x}$-resolution. Then, the identifications $R_{\boldsymbol{X}}\cong R_{\boldsymbol{Y}}\otimes_{\mathbb{F}}R_{x}$
and $\Xi_{\boldsymbol{X}}\cong\Xi_{\boldsymbol{Y}}\otimes_{\mathbb{F}}\Xi_{x}$
induce an $R_{\boldsymbol{X}}$-isomorphism between $\overline{C}_{\boldsymbol{Y},x}=(C\otimes R_{\boldsymbol{Y}}\otimes\Xi_{\boldsymbol{Y}})\otimes R_{x}\otimes\Xi_{x}$
and $\overline{C}_{\boldsymbol{X}}=C\otimes R_{\boldsymbol{X}}\otimes\Xi_{\boldsymbol{X}}$
which is an isomorphism of chain complexes. Moreover, under this isomorphism,
$q_{C,\boldsymbol{Y}}\circ q_{C,\boldsymbol{Y},x}$ agrees with $q_{C,\boldsymbol{X}}$.
Hence $q_{C,\boldsymbol{X}}$ is a quasi-isomorphism.
\end{proof}
\begin{rem}[The image of $q_{C}^{-1}$ in $K^{-}(\mathsf{Mod}_{\mathbb{F}})$]
\label{rem:f-linear-inverse}The image of $q_{C}^{-1}\in\mathrm{Hom}_{D^{-}(\mathsf{Mod}_{R_{\boldsymbol{X}}})}(C,\overline{C})$
under the forgetful functor $D^{-}(\mathsf{Mod}_{R_{\boldsymbol{X}}})\to K^{-}(\mathsf{Mod}_{\mathbb{F}})$
is the $\mathbb{F}$-linear chain homotopy inverse of $q_{C}:\overline{C}\to C$
(unique up to $\mathbb{F}$-chain homotopy). Explicitly, let $f_{C}:C\to\overline{C}:c\mapsto c\otimes1\otimes1$.
Then we have $q_{C}\circ f_{C}=\mathrm{Id}_{C}$, and so $f_{C}$
is the image of $q_{C}^{-1}$ in $K^{-}(\mathsf{Mod}_{\mathbb{F}})$.
\end{rem}

\begin{defn}[$\overline{D}Kh$]
Denote as $\overline{D}Kh(L,\vec{\boldsymbol{p}})$ our preferred
free $R_{\boldsymbol{X}}$-resolution $DKh(L,\vec{\boldsymbol{p}})\otimes_{\mathbb{F}}R_{\boldsymbol{X}}\otimes_{\mathbb{F}}\Xi_{\boldsymbol{X}}$
of $DKh(L,\vec{\boldsymbol{p}})$.
\end{defn}

Let $(\Sigma,\vec{A}):(L,\vec{\boldsymbol{p}})\to(L,\vec{\boldsymbol{p}}')$
be a slide movie. Let the basepoint that slides have color $y\in\boldsymbol{X}$,
and let $H$ be the basepoint sliding homotopy. Then, define 
\[
\overline{D}Kh(\Sigma,\vec{A}):\overline{D}Kh(L,\vec{\boldsymbol{p}})\to\overline{D}Kh(L,\vec{\boldsymbol{p}}'):c\otimes r\otimes\xi\mapsto c\otimes r\otimes\xi+(Hc)\otimes r\otimes(\xi_{y}\xi).
\]
To avoid confusion, for $x\in\boldsymbol{X}$, denote as $x$ (resp.\ $x'$)
its action on $CKh(L)$ (resp.\ $CKh(L')$). That $\overline{D}Kh(\Sigma,\vec{A})$
is $R_{\boldsymbol{X}}$-linear is clear from the definition; that
it is a chain map follows from Lemma~\ref{lem:homotopy-identities}
and more precisely the following identities of maps $CKh(L)\to CKh(L)$:
$y+y'=H\partial+\partial H$, for all $x$ we have $Hx=x'H$, and
for all $x\neq y$ we have $x=x'$.

Define $DKh(\Sigma,\vec{A})\in\mathrm{Hom}_{D^{b}(\mathsf{Mod}_{R_{\boldsymbol{X}}})}(DKh(L,\vec{\boldsymbol{p}}),DKh(L,\vec{\boldsymbol{p}}'))$
as the following composition:
\[
DKh(\Sigma,\vec{A}):DKh(L,\vec{\boldsymbol{p}})\xrightarrow{q_{DKh(L,\vec{\boldsymbol{p}})}^{-1}}\overline{D}Kh(L,\vec{\boldsymbol{p}})\xrightarrow{\overline{D}Kh(\Sigma,\vec{A})}\overline{D}Kh(L,\vec{\boldsymbol{p}}')\xrightarrow{q_{DKh(L,\vec{\boldsymbol{p}}')}}DKh(L,\vec{\boldsymbol{p}}').
\]
Note that although $\overline{D}Kh(L,\vec{\boldsymbol{p}})$ and $\overline{D}Kh(L,\vec{\boldsymbol{p}}')$
only belong to $D^{-}(\mathsf{Mod}_{R_{\boldsymbol{X}}})$, since
$D^{b}(\mathsf{Mod}_{R_{\boldsymbol{X}}})$ is a full subcategory
of $D^{-}(\mathsf{Mod}_{R_{\boldsymbol{X}}})$, the above composition
lies in $D^{b}(\mathsf{Mod}_{R_{\boldsymbol{X}}})$.

Finally, for completeness, let us define $\overline{D}Kh(\Sigma,\vec{A}):\overline{D}Kh(L,\vec{\boldsymbol{p}})\to\overline{D}Kh(L,\vec{\boldsymbol{p}}')$
also for the elementary movies (\ref{enu:movie1})-(\ref{enu:movie6}):
for these elementary movies, define $\overline{D}Kh(\Sigma,\vec{A}):=CKh(\Sigma)\otimes\mathrm{Id}_{R_{\boldsymbol{X}}\otimes_{\mathbb{F}}\Xi_{\boldsymbol{X}}}$.

\subsection{\label{subsec:dkh-tangles}The enhanced Khovanov TQFT for tangles}

In this section, we define the enhanced Khovanov TQFT for tangles.
Instead of trying to ``derive'' the Bar-Natan category $\mathsf{BN}_{\boldsymbol{e}}$,
we will exclusively work with chain complexes that resemble the preferred
free resolutions from Subsection~\ref{subsec:dkh-links}. Indeed,
the target category is $K_{R_{\boldsymbol{X}}}^{-}(\mathsf{BN}_{\boldsymbol{e}})$
which we now define; the functor will be called $\overline{D}_{\mathsf{BN}}:\mathsf{Tang}_{\boldsymbol{e},\boldsymbol{X}}\to K_{R_{\boldsymbol{X}}}^{-}(\mathsf{BN}_{\boldsymbol{e}})$.

\begin{defn}
Let $\mathsf{A}$ be an additive $\mathbb{F}$-linear category, and
let $R$ be a ring. Define the \emph{homotopy category of bounded
above $R$-chain complexes} $K_{R}^{-}(\mathsf{A})$ as follows.

An object of $K_{R}^{-}(\mathsf{A})$ is a pair $(C,\rho_{C})$ of
a bounded above chain complex $C\in Ch^{-}(\mathsf{A})$ and a ring
homomorphism $\rho_{C}:R\to\mathrm{Hom}_{Ch^{-}(\mathsf{A})}(C,C)$,
called the \emph{$R$-action} on $C$. Given two such pairs $(C,\rho_{C})$
and $(D,\rho_{D})$, a map $f:C\to D$ is \emph{$R$-linear }if for
all $r\in R$, $f\circ\rho_{C}(r)=\rho_{D}(r)\circ f$. The morphism
space $\mathrm{Hom}_{K_{R}^{-}(\mathsf{A})}((C,\rho_{C}),(D,\rho_{D}))$
is the $R$-module of $R$-linear chain maps modulo $R$-linear chain
homotopies. For simplicity, we often write $C$ instead of $(C,\rho_{C})$.
\end{defn}

\begin{defn}
\label{def:preferred-resolution}Given an additive $\mathbb{F}$-linear
category $\mathsf{A}$ and a chain complex $C\in Ch^{-}(\mathsf{A})$
together with an $R_{\boldsymbol{X}}$-action $\rho_{C}:R_{\boldsymbol{X}}\to\mathrm{Hom}_{Ch^{-}(\mathsf{A})}(C,C)$,
define its \emph{preferred free $R_{\boldsymbol{X}}$-resolution}
$(\overline{C},\rho_{\overline{C}})\in K_{R_{\boldsymbol{X}}}^{-}(\mathsf{A})$
as $\overline{C}:=C\otimes_{\mathbb{F}}(R_{\boldsymbol{X}}\otimes_{\mathbb{F}}\Xi_{\boldsymbol{X}})$
with differential 
\[
\partial_{\overline{C}}:=\partial_{C}\otimes\mathrm{Id}_{R_{\boldsymbol{X}}}\otimes\mathrm{Id}_{\Xi_{\boldsymbol{X}}}+\sum_{x\in\boldsymbol{X}}(\rho_{C}(x)\otimes\mathrm{Id}_{R_{\boldsymbol{X}}}+\mathrm{Id}_{C}\otimes x)\otimes\xi_{x},
\]
where $x:R_{\boldsymbol{X}}\to R_{\boldsymbol{X}}$ (resp.\ $\xi_{x}:\Xi_{\boldsymbol{X}}\to\Xi_{\boldsymbol{X}}$)
means multiplication by $x$ (resp.\ $\xi_{x}$) (e.g.\ $\partial_{F_{\boldsymbol{X}}}=\sum_{x\in\boldsymbol{X}}x\otimes\xi_{x}$)
and the $R_{\boldsymbol{X}}$-action $\rho_{\overline{C}}$ is given
by acting on $R_{\boldsymbol{X}}$, i.e.\ $\rho_{\overline{C}}(r):={\rm Id}_{C}\otimes r\otimes{\rm Id}_{\Xi_{{\bf X}}}$.
\end{defn}

\begin{defn}[The tangle TQFT $\overline{D}_{\mathsf{BN}}$]
\label{def:dbarbn}Let $(T,\vec{\boldsymbol{p}})$ be a generic $\boldsymbol{X}$-pointed
$\boldsymbol{e}$-tangle. Consider the $R_{\boldsymbol{X}}$-action
on $C_{\mathsf{BN}}(T)$ given by $\rho(x):=\sum_{p\in\boldsymbol{p}_{x}}p$;
let $\overline{D}_{\mathsf{BN}}(T,\vec{\boldsymbol{p}}):=C_{\mathsf{BN}}(T)\otimes_{\mathbb{F}}(R_{\boldsymbol{X}}\otimes_{\mathbb{F}}\Xi_{\boldsymbol{X}})$
be its preferred free $R_{\boldsymbol{X}}$-resolution.

If $(\Sigma,\vec{A}):(T,\vec{\boldsymbol{p}})\to(T',\vec{\boldsymbol{p}}')$
is one of the elementary movies (\ref{enu:movie1})-(\ref{enu:movie6}),
let 
\[
\overline{D}_{\mathsf{BN}}(\Sigma,\vec{A}):=C_{\mathsf{BN}}(\Sigma)\otimes\mathrm{Id}_{R_{\boldsymbol{X}}\otimes\Xi_{\boldsymbol{X}}}:\overline{D}_{\mathsf{BN}}(T,\vec{\boldsymbol{p}})\to\overline{D}_{\mathsf{BN}}(T',\vec{\boldsymbol{p}}').
\]
For (\ref{enu:movie7}), we have $T=T'$; let the relevant basepoints
have color $y$, and let $H:C_{\mathsf{BN}}(T)\to C_{\mathsf{BN}}(T)$
be the corresponding basepoint sliding homotopy. Let 
\[
\overline{D}_{\mathsf{BN}}(\Sigma,\vec{A}):=\mathrm{Id}_{C_{\mathsf{BN}}(T)}\otimes\mathrm{Id}_{R_{\boldsymbol{X}}\otimes\Xi_{\boldsymbol{X}}}+H\otimes\mathrm{Id}_{R_{\boldsymbol{X}}}\otimes\xi_{y}:\overline{D}_{\mathsf{BN}}(T,\vec{\boldsymbol{p}})\to\overline{D}_{\mathsf{BN}}(T,\vec{\boldsymbol{p}}').
\]
\end{defn}

\begin{lem}[Recovers $DKh$]
The functor $\overline{D}_{\mathsf{BN}}$ recovers the functor $DKh$
from Subsection~\ref{subsec:dkh-links}. More precisely, let $\mathsf{Free}$
be the functor from Definition~\ref{def:preferred}, and let ${\cal F}:K_{R_{\boldsymbol{X}}}^{-}(\mathsf{BN}_{\boldsymbol{e}})\to K^{-}(\mathsf{Mod}_{R_{\boldsymbol{X}}})$
be the functor induced by the usual TQFT ${\cal F}$ from \cite[Section~7]{BN1}.
The following commutes: 
\[\begin{tikzcd}[ampersand replacement=\&]
	{\mathsf{Tang}_{\emptyset,\boldsymbol{X}}} \& {K_{R_{\boldsymbol{X}}}^{-}(\mathsf{BN}_{\emptyset })} \& {K^{-}(\mathsf{Mod}_{R_{\boldsymbol{X}}})} \& {D^{-}(\mathsf{Mod}_{R_{\boldsymbol{X}}})} \\
	{\mathsf{Link}_{\boldsymbol{X}}} \&\&\& {D^b (\mathsf{Mod}_{R_{\boldsymbol{X}}})}
	\arrow["{\overline{D}_{\mathsf{BN}}}", from=1-1, to=1-2]
	\arrow[from=1-1, to=2-1]
	\arrow["{\cal F}", from=1-2, to=1-3]
	\arrow[from=1-3, to=1-4]
	\arrow["DKh", from=2-1, to=2-4]
	\arrow["{\mathsf{Free}}"', from=2-4, to=1-4]
\end{tikzcd}\]
\end{lem}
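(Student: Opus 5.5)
The plan is to check commutativity on objects and on each of the seven elementary movies. Every morphism in $\mathsf{Tang}_{\emptyset,\boldsymbol{X}}$ is a composite of elementary movies, and both routes around the diagram are compatible with composition. For the top route this is because $\overline{D}_{\mathsf{BN}}$ is defined movie by movie (functoriality is proved later in Section~\ref{sec:Invariance}; here we only need agreement on generators). For the bottom route it is because $DKh$ is defined on elementary movies and $\mathsf{Free}$ is a functor. Since $\mathsf{Free}$ sends $f$ to $q_{D}^{-1}\circ f\circ q_{C}$, it suffices to show the following: for each elementary movie $(\Sigma,\vec{A})$, the image of ${\cal F}(\overline{D}_{\mathsf{BN}}(\Sigma,\vec{A}))$ in $D^{-}(\mathsf{Mod}_{R_{\boldsymbol{X}}})$ equals $q^{-1}\circ DKh(\Sigma,\vec{A})\circ q$.

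\textbf{Objects.} For a generic $\boldsymbol{X}$-pointed link with empty endpoint set, ${\cal F}(C_{\mathsf{BN}}(L))=CKh(L)$. The functor ${\cal F}$ is additive and $\mathbb{F}$-linear and sends a dotted identity cobordism to the usual basepoint action. Hence it carries $\rho(x)=\sum_{p\in\boldsymbol{p}_{x}}p$ to the $R_{\boldsymbol{X}}$-action of Definition~\ref{def:dkh-objects}. Because ${\cal F}$ commutes with $-\otimes_{\mathbb{F}}(R_{\boldsymbol{X}}\otimes\Xi_{\boldsymbol{X}})$ (it is applied termwise, and $\Xi_{\boldsymbol{X}}$ is finite-dimensional in each bidegree), we get
\[
{\cal F}(\overline{D}_{\mathsf{BN}}(L,\vec{\boldsymbol{p}}))=\overline{D}Kh(L,\vec{\boldsymbol{p}})=\mathsf{Free}(DKh(L,\vec{\boldsymbol{p}})).
\]
One point needs a little care: ${\cal F}$ on the homotopy category is well defined for $R$-linear homotopies, since it preserves composition and sends $R$-linear maps to $R$-linear maps.

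\textbf{Morphisms.} For movies (\ref{enu:movie1})--(\ref{enu:movie6}), ${\cal F}$ sends $C_{\mathsf{BN}}(\Sigma)\otimes\mathrm{Id}$ to $CKh(\Sigma)\otimes\mathrm{Id}$, which is $\overline{D}Kh(\Sigma,\vec{A})$. For a slide movie, ${\cal F}(H)$ is the basepoint sliding homotopy on $CKh$, because it is a sum of saddle maps. So ${\cal F}(\overline{D}_{\mathsf{BN}}(\Sigma,\vec{A}))=\overline{D}Kh(\Sigma,\vec{A})$ on the nose in every case. It then remains to show that in $D^{-}$, for every elementary movie,
\[
\overline{D}Kh(\Sigma,\vec{A})=q^{-1}\circ DKh(\Sigma,\vec{A})\circ q.
\]
For slide movies this is immediate from the definition $DKh=q\circ\overline{D}Kh\circ q^{-1}$, using that $q$ is invertible in the derived category (Lemma~\ref{lem:quasi-iso}). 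For movies (\ref{enu:movie1})--(\ref{enu:movie6}), $DKh(\Sigma,\vec{A})=CKh(\Sigma)$ is an $R_{\boldsymbol{X}}$-linear chain map. One checks directly that $q_{C'}\circ(CKh(\Sigma)\otimes\mathrm{Id})=CKh(\Sigma)\circ q_{C}$ on the chain level: both sides send $c\otimes r\otimes1$ to $r\,CKh(\Sigma)(c)$ by $R_{\boldsymbol{X}}$-linearity, and both kill the terms with $\xi\neq1$. This gives the required identity.

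The main obstacle is bookkeeping rather than mathematics. We must make sure that ${\cal F}$ genuinely intertwines the $R_{\boldsymbol{X}}$-actions and the $\xi_{x}$-terms of the differential from Definition~\ref{def:preferred-resolution}. We must also ensure that the vertical arrow $\mathsf{Tang}_{\emptyset,\boldsymbol{X}}\to\mathsf{Link}_{\boldsymbol{X}}$ is compatible with how $DKh$ was defined on elementary movies, so that the comparison is made on the same decompositions.
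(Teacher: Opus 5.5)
Your proposal is correct and takes the paper's approach; the paper only says the lemma is ``clear from the definitions,'' and you have spelled out what that means, namely agreement on objects and on each elementary movie. The only check with any content is the chain-level identity $q_{C'}\circ(CKh(\Sigma)\otimes\mathrm{Id})=CKh(\Sigma)\circ q_{C}$ for movies (1)--(6), which you verify correctly using $R_{\boldsymbol{X}}$-linearity of $CKh(\Sigma)$. For slide movies, $\mathsf{Free}(DKh(\Sigma,\vec{A}))=\overline{D}Kh(\Sigma,\vec{A})$ holds by construction.
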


\begin{proof}
Clear from the definitions.
\end{proof}
The following lemma in particular shows that the isomorphism type
of $\overline{D}_{\mathsf{BN}}(T,\vec{\boldsymbol{p}})$ is invariant
under isotopies of $(T,\vec{\boldsymbol{p}})$; compare \cite[Proposition~2.9]{MR3604486},
\cite[Theorem~4.2~(1)]{MR4521052}.
\begin{lem}
\label{lem:homotopy-equivalence}Let $(\Sigma,\vec{A}):(T,\vec{\boldsymbol{p}})\to(T',\vec{\boldsymbol{p}'})$
be such that $\Sigma$ is induced by an isotopy $T\to T'$. Then $\overline{D}_{\mathsf{BN}}(\Sigma,\vec{A})$
has bidegree $(0,0)$, and it has a bidegree $(0,0)$ $R_{\boldsymbol{X}}$-homotopy
inverse.
\end{lem}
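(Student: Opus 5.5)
The plan is to reduce to elementary movies and treat each type separately. An isotopy $T\to T'$ of generic $\boldsymbol{X}$-pointed tangles, together with decorations $\vec{A}$, can be arranged, after a small isotopy rel.\ $\partial$, to be a composition of elementary movies of types (\ref{enu:movie1}), (\ref{enu:movie2}), (\ref{enu:movie4}), (\ref{enu:movie5}), (\ref{enu:movie6}) and (\ref{enu:movie7}). No Morse movies occur, because $\Sigma$ is a product cobordism. Bidegree $(0,0)$ maps with bidegree $(0,0)$ $R_{\boldsymbol{X}}$-homotopy inverses are closed under composition. So it suffices to check the claim for a single elementary movie of each of these types.

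\emph{Planar isotopies, cap, cup and swap movies.} A planar isotopy gives an isomorphism of $C_{\mathsf{BN}}$ that intertwines the basepoint actions, and tensoring it with $\mathrm{Id}$ gives an $R_{\boldsymbol{X}}$-isomorphism of the preferred resolutions. For cap, cup and swap movies the map is the identity.

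\emph{Reidemeister movies.} By Bar-Natan's locality, $C_{\mathsf{BN}}(T)=C_{\mathsf{BN}}(T_{\mathrm{loc}})\otimes C_{\mathsf{BN}}(T_{\mathrm{rest}})$ via a planar arc diagram, where the Reidemeister move happens inside $T_{\mathrm{loc}}$ and all basepoints lie in $T_{\mathrm{rest}}$. Bar-Natan's Reidemeister map has the form $f\otimes\mathrm{Id}$, where $f$ is a bidegree $(0,0)$ homotopy equivalence. It has a bidegree $(0,0)$ homotopy inverse $g\otimes\mathrm{Id}$ and homotopies $h\otimes\mathrm{Id}$, $h'\otimes\mathrm{Id}$.

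Each basepoint action has the form $\mathrm{Id}\otimes p$. Hence $f\otimes\mathrm{Id}$, $g\otimes\mathrm{Id}$ and the homotopies all commute on the nose with $\rho(x)$ for every $x$. Since these maps commute with both $\partial_C$ and $\rho_C(x)$, tensoring them with $\mathrm{Id}_{R_{\boldsymbol{X}}\otimes\Xi_{\boldsymbol{X}}}$ produces $R_{\boldsymbol{X}}$-linear chain maps between the preferred free resolutions of Definition~\ref{def:preferred-resolution}. The homotopy identities persist: the extra terms $(\rho_C(x)\otimes\mathrm{Id}+\mathrm{Id}\otimes x)\otimes\xi_x$ in the differential commute with the tensored homotopies. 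Grading is preserved because $\rho$ and $\xi_x$ do not enter these maps.

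\emph{Slide movies.} Here $\overline{D}_{\mathsf{BN}}(\Sigma,\vec{A})=\mathrm{Id}+H\otimes\mathrm{Id}\otimes\xi_y$. Since $H$ has bidegree $(-1,-2)$ and $\xi_y$ has bidegree $(1,2)$, this map has bidegree $(0,0)$. It is a chain map by Lemma~\ref{lem:homotopy-identities}, exactly as in the link case. For the inverse, take the reverse slide movie $(T,\vec{\boldsymbol{p}}')\to(T,\vec{\boldsymbol{p}})$. Its basepoint sliding homotopy is the same map $H$, since $H_c$ depends only on the crossing $c$. So its map is again $\mathrm{Id}+H\otimes\mathrm{Id}\otimes\xi_y$. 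Over $\mathbb{F}$ the composite in either order is
\[
(\mathrm{Id}+H\xi_y)^2=\mathrm{Id}+2H\xi_y+H^2\xi_y^2=\mathrm{Id},
\]
using $H^2=0$ from Lemma~\ref{lem:homotopy-identities}. Thus the slide map is even an $R_{\boldsymbol{X}}$-isomorphism.

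The only point I expect to need care is the Reidemeister case. One must make sure that Bar-Natan's homotopy inverses and homotopies can be chosen local, so that they commute strictly with the basepoint dots rather than only up to homotopy. I would argue this directly from the planar algebra structure, by placing all basepoints outside the disk where the move occurs. This is allowed because the Reidemeister movie of Definition~\ref{def:elementary-movies} keeps $\vec{A}$ disjoint from the region of the move.
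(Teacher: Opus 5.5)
Your proposal is correct and follows essentially the same route as the paper: reduce to elementary movies, note that cap, cup and swap maps are the identity, tensor Bar-Natan's Reidemeister homotopy inverse with $\mathrm{Id}_{R_{\boldsymbol{X}}\otimes\Xi_{\boldsymbol{X}}}$, and use the reverse slide movie together with $H^{2}=0$ to get a strict chain-level inverse. Your extra remark is a detail the paper leaves implicit when it writes ``hence'' in the Reidemeister case: the Reidemeister homotopies can be taken local, away from the basepoints, so they commute strictly with each $\rho_C(x)$ and therefore with the $\xi_x$-terms of the differential.
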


\begin{proof}
It is sufficient to show the lemma for the cases where $(\Sigma,\vec{A})$
is a Reidemeister, cap, cup, swap, or slide movie. First, the cup,
cap, and swap maps are the identity. If $(\Sigma,\vec{A})$ is a Reidemeister
movie, let $\Sigma'$ be the inverse undecorated Reidemeister movie.
Then $C_{\mathsf{BN}}(\Sigma)$ and $C_{\mathsf{BN}}(\Sigma')$ are
bidegree $(0,0)$ homotopy inverses. Recall that $\overline{D}_{\mathsf{BN}}(\Sigma):=C_{\mathsf{BN}}(\Sigma)\otimes\mathrm{Id}_{R_{\boldsymbol{X}}\otimes\Xi_{\boldsymbol{X}}}$
and $\overline{D}_{\mathsf{BN}}(\Sigma'):=C_{\mathsf{BN}}(\Sigma')\otimes\mathrm{Id}_{R_{\boldsymbol{X}}\otimes\Xi_{\boldsymbol{X}}}$;
hence $\overline{D}_{\mathsf{BN}}(\Sigma)$ and $\overline{D}_{\mathsf{BN}}(\Sigma')$
are bidegree $(0,0)$ $R_{\boldsymbol{X}}$-homotopy inverses.

If $(\Sigma,\vec{A})$ is a slide movie, then let $(\Sigma',\vec{A'}):(T',\vec{\boldsymbol{p}'})\to(T,\vec{\boldsymbol{p}})$
be the inverse slide movie; $\overline{D}_{\mathsf{BN}}(\Sigma,\vec{A})$
and $\overline{D}_{\mathsf{BN}}(\Sigma',\vec{A'})$ are strict inverses
on the chain level. (Here, we use Lemma~\ref{lem:homotopy-identities}:
$H^{2}=0$ if $H$ is the basepoint sliding homotopy.)
\end{proof}

\section{\label{sec:Invariance}Invariance}

In this section, we show that the functors $\overline{D}_{\mathsf{BN}}$
and $DKh$ from Section~\ref{sec:Pointed-links-and} are well-defined.
Let us first recall the steps that go into showing that $C_{\mathsf{BN}}(\Sigma)$
and $CKh$ are well-defined \cite{Ja,BN1,MR2171235,MWW} (compare
\cite{MR4584595}).

\begin{enumerate}
[label=(Step~\arabic*)]
\item \label{enu:step1}Define $C_{\mathsf{BN}}(\Sigma)$ for elementary
undecorated movies, and hence for a sequence of elementary undecorated
movies.
\end{enumerate}
Let us introduce the following definition for \ref{enu:step2}.
\begin{defn}[Generic undecorated tangle cobordisms]
\label{def:tang-gen}An undecorated tangle cobordism $\Sigma\subset I\times D^{3}=I\times D^{2}\times[-1,1]$
is \emph{generic} if the composition $f:\Sigma\to I\times D^{2}\times[-1,1]\to I\times D^{2}$
is generic \cite[Section~1.2]{CS} and the height function $\pi:I\times D^{2}\to I$
is also generic \cite[Definition~1.2]{CS}. The \emph{Reidemeister
loci }are points on $\Sigma$ that correspond to critical points,
branch points, and triple points of $\pi\circ f_{2}$ (recall the
definition of $f_{2}$ from \cite[Section~1.2]{CS}), and the \emph{Morse
loci} are points on $\Sigma$ that correspond to critical points of
$\pi\circ f$.

Let $\mathsf{Tang}_{\boldsymbol{e}}^{\mathrm{gen}}$ be the category
whose objects are undecorated tangles with endpoints $\boldsymbol{e}$
and morphisms are generic undecorated tangle cobordisms, where two
such cobordisms are identified if they are isotopic rel.~$\partial$
in $I\times D^{3}$ through generic undecorated tangle cobordisms.
\end{defn}

\begin{enumerate}
[label=(Step~\arabic*)]\addtocounter{enumi}{1}
\item \label{enu:step2}For sequences $\Sigma_{1}$ and $\Sigma_{2}$ of
elementary undecorated movies, show that $C_{\mathsf{BN}}(\Sigma_{1})\sim C_{\mathsf{BN}}(\Sigma_{2})$
if the undecorated tangle cobordisms that correspond to $\Sigma_{1}$
and $\Sigma_{2}$ are isotopic rel.~$\partial$ in $I\times D^{3}$
through generic undecorated tangle cobordisms. Hence, $C_{\mathsf{BN}}:\mathsf{Tang}_{\boldsymbol{e}}^{\mathrm{gen}}\to K^{b}(\mathsf{BN}_{\boldsymbol{e}})$
is well-defined.
\item \label{enu:step3}For generic undecorated tangle cobordisms $\Sigma_{1}$
and $\Sigma_{2}$, show that $C_{\mathsf{BN}}(\Sigma_{1})\sim C_{\mathsf{BN}}(\Sigma_{2})$
if $\Sigma_{1}$ and $\Sigma_{2}$ are isotopic rel.~$\partial$
in $I\times D^{3}$.
\item \label{enu:step4}For $\boldsymbol{e}=\emptyset$, show that $C_{\mathsf{BN}}(\Sigma_{1})\sim C_{\mathsf{BN}}(\Sigma_{2})$
if $\Sigma_{1}$ and $\Sigma_{2}$ are isotopic rel.~$\partial$
in $I\times S^{3}$.
\end{enumerate}

For \ref{enu:step2}, one checks that the maps defined in \ref{enu:step1}
are invariant under (1) replacing a pair of planar isotopies with
their composition, and (2) commuting a planar isotopy and a Reidemeister
or Morse movie, both of which are clear from the definitions. For
\ref{enu:step3}, one uses \cite{CS} and checks that $C_{\mathsf{BN}}$
is invariant under (1) swapping the order of distant Reidemeister
or Morse movies that are adjacent in time, which we call \emph{far
commutation}, and (2) movie moves. Finally for \ref{enu:step4}, one
checks that $C_{\mathsf{BN}}$ is invariant under certain global moves
\cite[Formula~(3-1)]{MWW}; we call them the \emph{undecorated sweep-around
moves}.

We will follow these four steps for $\overline{D}_{\mathsf{BN}}$
and $\boldsymbol{X}$-decorated tangle cobordisms. We have done \ref{enu:step1}
in Subsection~\ref{subsec:dkh-tangles}; for \ref{enu:step2} we
introduce the following definition.
\begin{defn}[Generic $\boldsymbol{X}$-decorated tangle cobordisms]
Let $\Sigma$ be a generic undecorated tangle cobordism. Let $Cr\subset\Sigma$
be the subset that corresponds to the crossings. Away from the Reidemeister
and Morse loci, $Cr$ is a properly embedded $1$-manifold. For each
$x\in\boldsymbol{X}$, let $A_{x}$ be a $1$-manifold, let $A:=\bigsqcup_{x\in\boldsymbol{X}}A_{x}$,
and let $\varphi:A\to\Sigma$ be a smooth map such that $\varphi(\partial A)\subset\partial\Sigma$
and $\varphi$ is a proper embedding near $\partial A$. 

We say $(\Sigma,\vec{A})$ is a \emph{generic $\boldsymbol{X}$-decorated
tangle cobordism} if $\varphi$ satisfies the following conditions:
\begin{enumerate}
[label=(Cond~\Alph*)]
\item \label{enu:conda}$\varphi:A\to\Sigma$ is an immersion.
\item \label{enu:condb}Self-intersections are transverse, and there are
no triple self-intersections. Call self-intersections the \emph{swap
loci}.
\item The composition $A\xrightarrow{\varphi}I\times D^{3}\xrightarrow{\mathrm{proj}}I$
is Morse. Call the local maxima the \emph{cap loci} and the local
minima the \emph{cup loci}.
\item \label{enu:condd}$A$ intersects $Cr$ transversely. Call these the
\emph{slide loci}.
\end{enumerate}
\begin{enumerate}
[label=(Conds~E-J)]
\item \label{enu:conde-j}The Reidemeister, Morse, cap, cup, swap, and
slide loci are pairwise distinct and are supported in pairwise distinct
times.\footnote{In Subsection~\ref{subsec:Invariance-under-changing}, we will split
\ref{enu:conde-j} into six conditions \ref{enu:conde}-\ref{enu:condj}.}
\end{enumerate}
Define the category $\mathsf{Tang}_{\boldsymbol{e},\boldsymbol{X}}^{\mathrm{gen}}$
analogously to Definition~\ref{def:tang-gen}.
\end{defn}

Given this definition, \ref{enu:step2} is immediate just like the
undecorated case: we check that $\overline{D}_{\mathsf{BN}}$ is invariant
under replacing a pair of planar isotopies with their composition
and commuting a planar isotopy and any of the other elementary movies
(\ref{enu:movie2})-(\ref{enu:movie7}). Also note that any generic
$\boldsymbol{X}$-decorated tangle cobordism is isotopic rel.\ $\partial$
through generic $\boldsymbol{X}$-decorated tangle cobordisms to a
composition of elementary movies; hence $\overline{D}_{\mathsf{BN}}:\mathsf{Tang}_{\boldsymbol{e},\boldsymbol{X}}^{\mathrm{gen}}\to K_{R_{\boldsymbol{X}}}^{-}(\mathsf{BN}_{\boldsymbol{e}})$
is well-defined.

We carry out \ref{enu:step3} for $\overline{D}_{\mathsf{BN}}$ in
Subsections~\ref{subsec:Collapsing-colors}-\ref{subsec:The-general-case},
and \ref{enu:step4} for $\overline{D}_{\mathsf{BN}}$ in Subsection~\ref{subsec:The-sweep-around-move}.

\subsection{\label{subsec:Collapsing-colors}Collapsing colors}

In the undecorated case \cite{BN1}, \ref{enu:step3} and \ref{enu:step4}
are checked locally. In this subsection we prove Proposition~\ref{prop:check-movie-moves-local},
which lets us check the decorated movie moves locally. First, let
us observe that we can compose tangles and cobordisms (Proposition~\ref{prop:composing-tangles-cobordisms})
similarly to the undecorated case (\cite{BN1}, Subsection~\ref{subsec:Bar-Natan's-tangle-invariant}).
\begin{prop}[Composing tangles and cobordisms]
\label{prop:composing-tangles-cobordisms}Let $D$ be an oriented
planar arc diagram with input endpoints $\boldsymbol{e}_{1},\cdots,\boldsymbol{e}_{d}$
and output endpoints $\boldsymbol{e}$, let $\boldsymbol{X}_{1},\cdots,\boldsymbol{X}_{d}$
be finite sets, and let $\boldsymbol{X}=\boldsymbol{X}_{1}\sqcup\cdots\sqcup\boldsymbol{X}_{d}$.
Then, $D$ induces a functor 
\[
\mathsf{Tang}_{\boldsymbol{e}_{1},\boldsymbol{X}_{1}}^{\mathrm{gen}}\times\cdots\times\mathsf{Tang}_{\boldsymbol{e}_{d},\boldsymbol{X}_{d}}^{\mathrm{gen}}\to\mathsf{Tang}_{\boldsymbol{e},\boldsymbol{X}}^{\mathrm{gen}}.
\]

Given $R_{\boldsymbol{X}_{i}}$-chain complexes $(C_{i},\rho_{i})\in K_{R_{\boldsymbol{X}_{i}}}^{-}(\mathsf{BN}_{\boldsymbol{e}_{i}})$,
let $C:=C_{1}\otimes\cdots\otimes C_{d}\in Ch^{-}(\mathsf{BN}_{\boldsymbol{e}})$
and let $\rho:R_{\boldsymbol{X}}\to\mathrm{Hom}_{Ch^{-}(\mathsf{BN}_{\boldsymbol{e}})}(C,C)$
be such that $\rho(x_{i}):=\mathrm{Id}_{C_{1}}\otimes\cdots\otimes\rho_{i}(x_{i})\otimes\cdots\otimes\mathrm{Id}_{C_{d}}$
for $x_{i}\in\boldsymbol{X}_{i}$. This induces a functor
\[
K_{R_{\boldsymbol{X}_{1}}}^{-}(\mathsf{BN}_{\boldsymbol{e}_{1}})\times\cdots\times K_{R_{\boldsymbol{X}_{d}}}^{-}(\mathsf{BN}_{\boldsymbol{e}_{d}})\to K_{R_{\boldsymbol{X}}}^{-}(\mathsf{BN}_{\boldsymbol{e}}).
\]

These functors are associative and commute with $\overline{D}_{\mathsf{BN}}$.
\end{prop}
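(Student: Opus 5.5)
Three things need to be checked: the geometric functor, the algebraic functor, and compatibility of $\overline{D}_{\mathsf{BN}}$ with both.

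\emph{The geometric functor.} Given generic decorated cobordisms $(\Sigma_i,\vec{A}_i)$ in $I\times D^3$, place them in the disks of $D$ and take the product with $I$ in the time direction. Add the vertical arcs $I\times D$ to $\Sigma=\bigcup_i\Sigma_i$, and let $A_x:=(A_i)_x$ for $x\in\boldsymbol{X}_i$. Since $\boldsymbol{X}$ is a disjoint union, each color lives in exactly one slot. Genericity can fail, because loci from different slots may occur at the same time. So I would define the functor on $\mathsf{Tang}^{\mathrm{gen}}$ only after perturbing the time parameters of the factors slightly. Different perturbations are related through generic cobordisms, via isotopies that only reorder distant loci in time. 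This does not change the morphism of $\mathsf{Tang}_{\boldsymbol{e},\boldsymbol{X}}^{\mathrm{gen}}$, since these are taken up to isotopy through generic cobordisms and reordering distant events is such an isotopy. Associativity is clear geometrically.

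\emph{The algebraic functor.} On objects, the $\rho(x_i)$ commute pairwise: for different slots they act on different tensor factors, and within a slot they commute because $\rho_i$ is a ring homomorphism. They square to zero and are chain maps, because Bar-Natan's planar-algebra tensor product is functorial and the differential is $\sum\mathrm{Id}\otimes\cdots\otimes\partial_i\otimes\cdots\otimes\mathrm{Id}$. So $\rho$ extends to a ring map from $R_{\boldsymbol{X}}=\bigotimes_i R_{\boldsymbol{X}_i}$. On morphisms, send $(f_1,\dots,f_d)$ to $f_1\otimes\cdots\otimes f_d$; this is $R_{\boldsymbol{X}}$-linear slotwise. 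An $R_{\boldsymbol{X}_i}$-homotopy $h_i$ between $f_i$ and $g_i$ tensored with identities gives an $R_{\boldsymbol{X}}$-homotopy, so the construction descends to the homotopy categories, and associativity is inherited from Bar-Natan's. One subtlety is boundedness: bounded above complexes in finitely many slots tensor to a bounded above complex, but each homological degree may be an infinite sum. Since $\Xi$ is bounded above and each graded piece is finite in each bidegree, I expect one has to track the quantum grading or pass to direct sums of finitely many pieces in each bidegree. I expect this to be harmless but would check it.

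\emph{Commuting with $\overline{D}_{\mathsf{BN}}$.} On objects, $\overline{D}_{\mathsf{BN}}$ of the composite is $C_{\mathsf{BN}}(T)\otimes R_{\boldsymbol{X}}\otimes\Xi_{\boldsymbol{X}}$ with $C_{\mathsf{BN}}(T)=\bigotimes_i C_{\mathsf{BN}}(T_i)$. This matches $\bigotimes_i\bigl(C_{\mathsf{BN}}(T_i)\otimes R_{\boldsymbol{X}_i}\otimes\Xi_{\boldsymbol{X}_i}\bigr)$ by the canonical shuffle isomorphism together with $R_{\boldsymbol{X}}\cong\bigotimes R_{\boldsymbol{X}_i}$ and $\Xi_{\boldsymbol{X}}\cong\bigotimes\Xi_{\boldsymbol{X}_i}$, exactly as in the induction step of Lemma~\ref{lem:quasi-iso}. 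The differential terms $\rho(x)\otimes\xi_x$ and $x\otimes\xi_x$ match because the basepoint action of a point in slot $i$ is $\mathrm{Id}\otimes p\otimes\mathrm{Id}$; this holds since dotted identity cobordisms compose in the planar algebra.

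On morphisms, it suffices to treat elementary movies, since the composite cobordism is, after perturbation, a sequence of elementary movies each supported in one slot. For movies (\ref{enu:movie1})--(\ref{enu:movie6}) the statement reduces to the undecorated compatibility of $C_{\mathsf{BN}}$ with planar composition. For a slide movie, I must check that the basepoint sliding homotopy $H_c$ of a crossing in slot $i$ equals $\mathrm{Id}\otimes H_c\otimes\mathrm{Id}$. This holds because the saddle maps $g_{\varepsilon_1,\varepsilon_0}$ are local. Then $\mathrm{Id}+H\otimes\xi_y$ corresponds under the shuffle isomorphism.

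I expect the main obstacle to be the well-definedness of the geometric functor with respect to the choice of time perturbation, together with making sure that the composite on the algebraic side is independent of the order in which the elementary movies from different slots are interleaved. For the second point I would use that maps in different slots tensor-commute on the nose: $(f\otimes\mathrm{Id})(\mathrm{Id}\otimes g)=(\mathrm{Id}\otimes g)(f\otimes\mathrm{Id})$. This holds even for slide maps, since their $\xi_y$ factors involve distinct colors and all the $\xi$'s commute.
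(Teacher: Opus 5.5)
Your approach is the paper's proof (which is just ``clear from the definitions''), unpacked. The ingredients are right: the slotwise $R_{\boldsymbol{X}}$-action, the sign-free shuffle isomorphism $C_{\mathsf{BN}}(T)\otimes R_{\boldsymbol{X}}\otimes\Xi_{\boldsymbol{X}}\cong\bigotimes_i\bigl(C_{\mathsf{BN}}(T_i)\otimes R_{\boldsymbol{X}_i}\otimes\Xi_{\boldsymbol{X}_i}\bigr)$, and locality of dots and of the saddles making up $H_c$. Your checks on elementary movies are also correct.

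One step is false, though: reordering two distant loci in time is \emph{not} an isotopy through generic cobordisms. In this paper, genericity requires loci at pairwise distinct times: this is part of the generic height function of Carter--Saito, and of Conds~E--J in the decorated setting. Any path from $t_1<t_2$ to $t_1>t_2$ passes through $t_1=t_2$. That is exactly why far commutation is a separate Step~3 check and reappears as Move~J. The problem is also not just the choice of perturbation. A generic isotopy of $\Sigma_1$ inside its own slot can push one of its loci past the time of a locus of $\Sigma_2$. So juxtaposition is well defined on morphisms of $\mathsf{Tang}^{\mathrm{gen}}_{\boldsymbol{e}_1,\boldsymbol{X}_1}\times\cdots\times\mathsf{Tang}^{\mathrm{gen}}_{\boldsymbol{e}_d,\boldsymbol{X}_d}$ only up to far commutations in $\mathsf{Tang}^{\mathrm{gen}}_{\boldsymbol{e},\boldsymbol{X}}$.

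The repair is the identity from your last paragraph, and it has to carry the argument rather than be a side remark. Elementary-movie maps in different slots commute on the nose, $(f\otimes\mathrm{Id})(\mathrm{Id}\otimes g)=(\mathrm{Id}\otimes g)(f\otimes\mathrm{Id})$, slide maps included. Hence $\overline{D}_{\mathsf{BN}}$ of \emph{every} interleaving equals $\bigotimes_i\overline{D}_{\mathsf{BN}}(\Sigma_i,\vec{A}_i)$ under the shuffle isomorphism. Read the geometric functor as landing in $\mathsf{Tang}^{\mathrm{gen}}_{\boldsymbol{e},\boldsymbol{X}}$ modulo far commutations; then this identity gives compatibility with $\overline{D}_{\mathsf{BN}}$. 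Note that the paper later deduces Move~J from this very proposition, so far-commutation invariance cannot be assumed when proving it.

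Separately, your boundedness worry is unfounded. For finitely many bounded above complexes, $\bigoplus_{a+b=n}C_1^a\otimes C_2^b$ has finitely many nonzero terms. Likewise, in homological degree $n$ the complex $\overline{C}$ has only finitely many summands $C^{n+k}\otimes R_{\boldsymbol{X}}\otimes V_k$, where $V_k\subset\Xi_{\boldsymbol{X}}$ is the finite-dimensional span of monomials of total exponent $-k$.
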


\begin{proof}
Clear from the definitions.
\end{proof}
\begin{prop}[Can check movie moves locally]
\label{prop:check-movie-moves-local}Let $D$ be an oriented planar
arc diagram with input endpoints $\boldsymbol{e}_{1},\cdots,\boldsymbol{e}_{d}$
and output endpoints $\boldsymbol{e}$. For $i=1,\cdots,d$, let $\boldsymbol{X}_{i}$
be a finite set, and let $(\Sigma_{i},\vec{A_{i}}),(\Sigma_{i}',\vec{A_{i}'}):(T_{i},\vec{\boldsymbol{p}_{i}})\to(T_{i}',\vec{\boldsymbol{p}_{i}'})$
be morphisms in $\mathsf{Tang}_{\boldsymbol{e}_{i},\boldsymbol{X}_{i}}^{\mathrm{gen}}$.
Let $\boldsymbol{X}=\bigcup_{i=1}^{d}\boldsymbol{X}_{i}$ and let
$(\Sigma,\vec{A}),(\Sigma',\vec{A'})\in\mathrm{Mor}_{\mathsf{Tang}_{\boldsymbol{e},\boldsymbol{X}}^{\mathrm{gen}}}((T,\vec{\boldsymbol{p}}),(T,\vec{\boldsymbol{p}'}))$
be the composition of the $(\Sigma_{i},\vec{A_{i}})$'s and $(\Sigma_{i}',\vec{A_{i}'})$'s,
respectively. If for all $i=1,\cdots d$ $\overline{D}_{\mathsf{BN}}(\Sigma_{i},\vec{A_{i}})$
and $\overline{D}_{\mathsf{BN}}(\Sigma_{i}',\vec{A_{i}'})$ are $R_{\boldsymbol{X}_{i}}$-chain
homotopic, then $\overline{D}_{\mathsf{BN}}(\Sigma,\vec{A})$ and
$\overline{D}_{\mathsf{BN}}(\Sigma',\vec{A}')$ are $R_{\boldsymbol{X}}$-chain
homotopic.
\end{prop}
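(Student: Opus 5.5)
The plan is to split the statement into two parts. The first is a \emph{disjoint-colors} case, where the sets $\boldsymbol{X}_i$ are pairwise disjoint. The second is a \emph{collapsing} step, which handles the fact that $\boldsymbol{X}=\bigcup_i\boldsymbol{X}_i$ is a union, not necessarily disjoint, so a color may appear in several pieces.

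\emph{Disjoint case.} Set $\boldsymbol{X}':=\boldsymbol{X}_1\sqcup\cdots\sqcup\boldsymbol{X}_d$. I will check directly from Definitions~\ref{def:preferred-resolution} and~\ref{def:dbarbn}, together with Proposition~\ref{prop:composing-tangles-cobordisms}, that the tensor product of the preferred resolutions $\overline{D}_{\mathsf{BN}}(T_i,\vec{\boldsymbol{p}_i})$ is the preferred free $R_{\boldsymbol{X}'}$-resolution of $C_{\mathsf{BN}}(T)$. This uses $R_{\boldsymbol{X}'}=\bigotimes_i R_{\boldsymbol{X}_i}$ and $\Xi_{\boldsymbol{X}'}=\bigotimes_i\Xi_{\boldsymbol{X}_i}$, and the fact that the differential is a sum over colors. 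Under this identification, the map of a composite elementary movie is the tensor product of the maps of its pieces. For a slide this is $\mathrm{Id}+H\otimes\xi_y$ on one factor and the identity on the others. Hence $\overline{D}_{\mathsf{BN}}(\Sigma,\vec A)=\bigotimes_i\overline{D}_{\mathsf{BN}}(\Sigma_i,\vec{A_i})$. If $f_i-f_i'=\partial h_i+h_i\partial$ with $h_i$ being $R_{\boldsymbol{X}_i}$-linear, then the telescoping identity $f_1\otimes f_2-f_1'\otimes f_2'=(f_1-f_1')\otimes f_2+f_1'\otimes(f_2-f_2')$ gives an $R_{\boldsymbol{X}'}$-linear homotopy, and induction on $d$ handles general $d$.

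\emph{Collapsing.} Let $\pi:\boldsymbol{X}'\to\boldsymbol{X}$ be the obvious surjection. I want a functor $\Phi_\pi$ from a suitable subcategory of $K^-_{R_{\boldsymbol{X}'}}(\mathsf{BN}_{\boldsymbol{e}})$ to $K^-_{R_{\boldsymbol{X}}}(\mathsf{BN}_{\boldsymbol{e}})$. It should send the preferred $R_{\boldsymbol{X}'}$-resolution of $(C,\rho)$ to the preferred $R_{\boldsymbol{X}}$-resolution of $C$ with the collapsed action $x\mapsto\sum_{x'\in\pi^{-1}(x)}\rho(x')$, and it should satisfy $\Phi_\pi\circ\overline{D}_{\mathsf{BN}}=\overline{D}_{\mathsf{BN}}$ on elementary movies.

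The subcategory consists of \emph{$\Xi$-polynomial} maps, namely maps of the form $\sum_m g_m\otimes\mathrm{Id}_{R}\otimes m$, where $m$ ranges over monomials in the $\xi$'s and each $g_m$ acts on $C$ alone. All $\overline{D}_{\mathsf{BN}}$ of elementary movies have this form. On such maps, $\Phi_\pi$ substitutes $\xi_{x'}\mapsto\xi_{\pi(x')}$ and keeps the $g_m$.

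The key observation is that the $R$-part $\mathrm{Id}_C\otimes x'\otimes\xi_{x'}$ of the differential commutes with every $\Xi$-polynomial map. So the chain-map and homotopy equations for such maps involve only $\partial_C+\sum_{x'}\rho(x')\xi_{x'}$. The substitution is a ring homomorphism of $\xi$-polynomial rings and sends this operator to $\partial_C+\sum_x\rho(x)\xi_x$. Therefore $\Phi_\pi$ preserves composition, chain maps, and homotopies between $\Xi$-polynomial maps. Note that a naive substitution on the whole differential would fail: it produces a coefficient $|\pi^{-1}(x)|\cdot x$ on the $R$-part, which is wrong mod $2$ when $|\pi^{-1}(x)|$ is even. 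Treating the $R$-part separately is what avoids this.

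\emph{Main obstacle.} The hypothesis provides arbitrary $R_{\boldsymbol{X}_i}$-linear homotopies, which need not be $\Xi$-polynomial. So I need a lemma: two $\Xi$-polynomial chain maps between preferred resolutions that are $R$-linearly homotopic are already homotopic through a $\Xi$-polynomial homotopy. Equivalently, inclusion of $\Xi$-polynomial maps into all $R$-linear maps is a quasi-isomorphism of morphism complexes.

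To prove it, I will use freeness to identify $\mathrm{Hom}_R(\overline{C},\overline{D})$ with maps $C\otimes\Xi\to D\otimes R\otimes\Xi$. I will filter by the $R$-degree, meaning the number of $x$'s, and show the associated graded of the quotient by the $\Xi$-polynomial part is acyclic. This should follow from a Koszul-type contraction that pairs $x$ with $\xi_x^{-1}$, in the same spirit as the filtration argument in Lemma~\ref{lem:quasi-iso}; boundedness above makes the filtration argument converge. This algebraic lemma is the step I expect to be hardest.

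Combining the lemma with $\Phi_\pi$ applied to the disjoint-case homotopy gives the $R_{\boldsymbol{X}}$-chain homotopy between $\overline{D}_{\mathsf{BN}}(\Sigma,\vec A)$ and $\overline{D}_{\mathsf{BN}}(\Sigma',\vec{A'})$.
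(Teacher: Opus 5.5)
Your overall architecture works and differs from the paper's. However, the argument you sketch for the step you call the main obstacle would not go through as written.

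\textbf{The filtration sketch fails.} Under $\mathrm{Hom}_{R}(\overline{C},\overline{D})\cong\mathrm{Hom}(C\otimes\Xi,D\otimes R\otimes\Xi)$, the only part of the differential that changes $R$-degree is the Koszul term $F\mapsto\sum_{x}x\,(\xi_{x}F+F\xi_{x})$. Filtering by $R$-degree therefore removes exactly the term your contraction is supposed to use. For example, take $C=D=\mathbb{F}$ with trivial action and one color. Then the associated graded of the quotient by $\Xi$-polynomial maps has zero differential and is nonzero, so it is not acyclic.

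\textbf{The lemma is still true, with a short proof.} Post-composition with $q_{D}$ maps the $\Xi$-polynomial maps bijectively onto $\mathrm{Hom}_{R}(\overline{C},D)$. Write $\xi^{a}=\prod_x\xi_x^{a_x}$ for a multi-index $a$. Then $q_{D}\circ\sum_{a}g_{a}\otimes\mathrm{Id}_{R}\otimes\xi^{a}$ is the $R$-linear map determined by $c\otimes1\otimes\xi^{-a}\mapsto g_{a}(c)$. Conversely, every $R$-linear map $\overline{C}\to D$ is determined by such a family, with finitely many nonzero terms by boundedness.

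Now suppose $F-F'=\partial h+h\partial$ with $h$ merely $R$-linear. Take the $\Xi$-polynomial $h'$ with $q_{D}h'=q_{D}h$. Since $q_{D}$ is a chain map,
\[
q_{D}(\partial h'+h'\partial)=q_{D}(\partial h+h\partial)=q_{D}(F-F').
\]
Both $\partial h'+h'\partial$ and $F-F'$ are $\Xi$-polynomial, and $q_{D}\circ-$ is injective on such maps, so $F-F'=\partial h'+h'\partial$. This gives injectivity on homology, which is all you need; the full quasi-isomorphism you state is not required.

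\textbf{Comparison with the paper.} The paper uses the same disjoint-color reduction (Proposition~\ref{prop:composing-tangles-cobordisms}), with $\boldsymbol{W}$ playing the role of your $\boldsymbol{X}'$. It then maps in the opposite direction. The map $\overline{\sigma}=\mathrm{Id}_{C}\otimes R_{\sigma}\otimes\Xi_{\sigma}:\overline{C}_{\boldsymbol{X}}\to\overline{C}_{\boldsymbol{W}}$ strictly intertwines the collapsed and uncollapsed cobordism maps (Proposition~\ref{prop:collapsing-colors}). Appendix~\ref{sec:collapsing-colors-proof} shows $\overline{\sigma}$ is an $R_{\boldsymbol{X}}$-homotopy equivalence. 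It does this using the change of basis $\psi$ of Lemma~\ref{lem:change-basis}, which moves $\rho_{C}$ from the differential into the $R$-action, together with a homotopy inverse of $F_{\boldsymbol{X}}\to F_{\boldsymbol{W}}$. The result follows by cancelling $\overline{\sigma}$.

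What each route buys:
\begin{itemize}
\item The paper's route gives an object-level equivalence of resolutions, valid for arbitrary $R_{\boldsymbol{X}}$-linear maps.
\item Your substitution $\xi_{x'}\mapsto\xi_{\pi(x')}$ only acts on $\Xi$-polynomial maps. But every $\overline{D}_{\mathsf{BN}}$ map has that form, and your observation that the Koszul part of the differential commutes with such maps does the job of the paper's change of basis.
\end{itemize}
With the $q_{D}$ argument above, your route avoids the appendix entirely and is somewhat more elementary. The paper's route yields the stronger structural statement that collapsing colors is an equivalence in $K^{-}_{R_{\boldsymbol{X}}}(\mathsf{BN}_{\boldsymbol{e}})$.
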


Of course, Proposition~\ref{prop:check-movie-moves-local} immediately
follows from Proposition~\ref{prop:composing-tangles-cobordisms}
if the $\boldsymbol{X}_{i}$'s are pairwise disjoint. We will show
that, roughly speaking, if $\boldsymbol{W},\boldsymbol{X}$ are finite
sets such that $|\boldsymbol{W}|\ge|\boldsymbol{X}|$, then studying
$\overline{D}_{\mathsf{BN}}:\mathsf{Tang}_{\boldsymbol{e},\boldsymbol{W}}^{\mathrm{gen}}\to K_{R_{\boldsymbol{W}}}^{-}(\mathsf{BN}_{\boldsymbol{e}})$
is enough to study $\overline{D}_{\mathsf{BN}}:\mathsf{Tang}_{\boldsymbol{e},\boldsymbol{X}}^{\mathrm{gen}}\to K_{R_{\boldsymbol{X}}}^{-}(\mathsf{BN}_{\boldsymbol{e}})$.
Let us fix a surjective function $\sigma:\boldsymbol{W}\to\boldsymbol{X}$.
\begin{defn}
\label{def:sigma-color}Let $R_{\sigma}:R_{\boldsymbol{X}}\to R_{\boldsymbol{W}}$
be the ring homomorphism such that $x\mapsto\sum_{\sigma(w)=x}w$
for $x\in\boldsymbol{X}$. Let $\Xi_{\sigma}:\Xi_{\boldsymbol{X}}\to\Xi_{\boldsymbol{W}}$
be the $\mathbb{F}$-linear map such that $\prod_{x\in\boldsymbol{X}}\xi_{x}^{-n_{x}}\mapsto\sum\prod_{w\in\boldsymbol{W}}\xi_{w}^{-n_{w}}$
where the sum is taken over $(n_{w})_{w\in\boldsymbol{W}}$ such that
for all $x\in\boldsymbol{X}$, $n_{x}=\sum_{\sigma(w)=x}n_{w}$.

Let $\mathsf{A}$ be an additive $\mathbb{F}$-linear category, let
$(C,\rho)\in K_{R_{\boldsymbol{W}}}^{-}(\mathsf{A})$, and let $(\overline{C}_{\boldsymbol{W}},\rho_{\boldsymbol{W}})\in K_{R_{\boldsymbol{W}}}^{-}(\mathsf{A})$
be its preferred free $R_{\boldsymbol{W}}$-resolution. Note that
$(C,\rho\circ R_{\sigma})\in K_{R_{\boldsymbol{X}}}^{-}(\mathsf{A})$;
denote as $(\overline{C}_{\boldsymbol{X}},\rho_{\boldsymbol{X}})\in K_{R_{\boldsymbol{X}}}^{-}(\mathsf{A})$
its preferred free $R_{\boldsymbol{X}}$-resolution. Define 
\[
\overline{\sigma}:=\mathrm{Id}_{C}\otimes R_{\sigma}\otimes\Xi_{\sigma}:C\otimes_{\mathbb{F}}R_{\boldsymbol{X}}\otimes_{\mathbb{F}}\Xi_{\boldsymbol{X}}\to C\otimes_{\mathbb{F}}R_{\boldsymbol{W}}\otimes_{\mathbb{F}}\Xi_{\boldsymbol{W}}.
\]
\end{defn}

Proposition~\ref{prop:sigma-cheq} is the key algebraic statement;
we show this in Appendix~\ref{sec:collapsing-colors-proof}.
\begin{prop}
\label{prop:sigma-cheq}Consider the $R_{\boldsymbol{X}}$-action
$\rho_{\boldsymbol{W}}\circ R_{\sigma}$ on $\overline{C}_{\boldsymbol{W}}$.
The map $\overline{\sigma}:\overline{C}_{\boldsymbol{X}}\to\overline{C}_{\boldsymbol{W}}$
is an $R_{\boldsymbol{X}}$-chain homotopy equivalence.
\end{prop}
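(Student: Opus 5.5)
First I verify that $\overline{\sigma}$ is an $R_{\boldsymbol{X}}$-linear chain map. Linearity is immediate, because $R_{\sigma}$ is a ring homomorphism and $\overline{\sigma}$ acts on the middle factor by $R_{\sigma}$. For the chain map property, observe that $\Xi_{\sigma}$ is dual to a coproduct. Concretely, $\Xi_{\sigma}$ intertwines multiplication by $\xi_{x}$ on $\Xi_{\boldsymbol{X}}$ with multiplication by $\xi_{w}$ on $\Xi_{\boldsymbol{W}}$ for every $w$ with $\sigma(w)=x$. Indeed, $\xi_{w}\Xi_{\sigma}(\prod\xi_{x}^{-n_{x}})$ is the sum over splittings of $n_{x}$ with $n_{w}\ge1$, shifted down by one in $n_{w}$. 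This is exactly the set of all splittings of $n_{x}-1$, which gives $\Xi_{\sigma}(\xi_{x}\prod\xi_{x}^{-n_{x}})$, with both sides zero when $n_{x}=0$.

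With this intertwining in hand, the $\boldsymbol{W}$-differential applied after $\overline{\sigma}$ is computed as follows:
\[
\sum_{w}(\rho(w)\otimes1+1\otimes w)\otimes\xi_{w}\circ\overline{\sigma}=\sum_{x}\sum_{\sigma(w)=x}(\rho(w)\otimes1+1\otimes w)R_{\sigma}\otimes\Xi_{\sigma}\xi_{x}.
\]
Summing over $w$ in the fibre gives $(\rho R_{\sigma}(x)\otimes1+1\otimes R_{\sigma}(x))$, which matches $\overline{\sigma}$ applied after the $\boldsymbol{X}$-differential. So $\overline{\sigma}$ is a chain map.

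Next I show that $\overline{\sigma}$ is a quasi-isomorphism, or rather a homotopy equivalence. Both complexes are preferred resolutions of $C$. Composing with the preferred quasi-isomorphisms gives $q_{C,\boldsymbol{W}}\circ\overline{\sigma}=q_{C,\boldsymbol{X}}$, since $\Xi_{\sigma}(1)=1$ and $\Xi_{\sigma}$ sends nonconstant monomials to sums of nonconstant monomials. In the abelian case ($\mathsf{A}$ a module category), Lemma~\ref{lem:quasi-iso} gives that $\overline{\sigma}$ is a quasi-isomorphism between bounded above complexes. The source is free over $R_{\boldsymbol{X}}$, and the target is free over $R_{\boldsymbol{W}}$, which is free over $R_{\boldsymbol{X}}$ via $R_{\sigma}$ (change variables so that one $w$ in each fibre is replaced by $R_{\sigma}(x)$). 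Bounded above complexes of projectives are K-projective, so the quasi-isomorphism is an $R_{\boldsymbol{X}}$-homotopy equivalence.

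For a general additive $\mathsf{A}$ this argument is unavailable, so I would instead build an explicit homotopy inverse that is natural in $C$, using only $\rho$ and $\partial_{C}$. This way it is uniform over $\mathsf{A}$. I would reduce, by the same inductive splitting used in Lemma~\ref{lem:quasi-iso}, to the case $|\boldsymbol{X}|=1$ and $\boldsymbol{W}=\{w_{1},w_{2}\}$; general $\sigma$ factors as a composition of such collapses tensored with identities. In that case I would use the change of basis of Remark~\ref{rem:change-basis-1} and Lemma~\ref{lem:change-basis}, writing $\overline{C}_{\boldsymbol{W}}\cong C\otimes F_{w_{1}}\otimes F_{w_{2}}$. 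Then I would change variables in $R_{\boldsymbol{W}}$ to $u=w_{1}+w_{2}$ and $w_{1}$. After this, $\overline{C}_{\boldsymbol{W}}$ should look like $\overline{C}_{\boldsymbol{X}}$ tensored with an extra Koszul-type factor in $w_{1},\xi_{w_{1}}$, which is contractible relative to $R_{\boldsymbol{X}}$ with an explicit filtration argument as in Lemma~\ref{lem:quasi-iso}. The projection killing the extra factor should serve as the inverse, and a perturbation-lemma homotopy should complete the argument.

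The main obstacle is this last step: writing down the contraction of the extra factor and checking that it is compatible with the twisted differential terms $\rho(w_{i})\otimes\xi_{w_{i}}$. Because $\rho(w_{1})$ and $\rho(w_{2})$ act separately on $C$, the splitting is not a plain tensor product. I would first try the homological perturbation lemma. The filtration by total $\xi$-degree is complete since everything is bounded above, so the perturbation series converge degreewise.
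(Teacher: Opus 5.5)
Your verification that $\overline{\sigma}$ is an $R_{\boldsymbol{X}}$-linear chain map is correct. So is your K-projectivity argument when $\mathsf{A}$ is a module category. But the proposition is needed for $\mathsf{A}=\mathsf{BN}_{\boldsymbol{e}}$, which is how it enters Proposition~\ref{prop:check-movie-moves-local}. So the general additive case is the substance, and there your proposal is only a plan whose decisive step you leave open.

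The obstacle you name is in fact illusory, and seeing why is the missing idea. After the change of basis of Lemma~\ref{lem:change-basis}, which you already invoke, the differential on $\overline{C}'=C\otimes_{\mathbb{F}}F_{\boldsymbol{X}}$ is the untwisted tensor differential $\partial_{C}\otimes\mathrm{Id}+\mathrm{Id}_{C}\otimes\partial_{F_{\boldsymbol{X}}}$. No terms $\rho(w)\otimes\xi_{w}$ remain; all dependence on $\rho$ has moved into the diagonal action. Pulled back along $R_{\sigma}$, $x$ acts on $\overline{C}'_{\boldsymbol{W}}$ as $\rho_{C}(R_{\sigma}(x))\otimes\mathrm{Id}+\mathrm{Id}_{C}\otimes R_{\sigma}(x)$. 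Only the fibre sum $\sum_{\sigma(w)=x}\rho(w)$ appears, so it does not matter that $\rho(w_{1})$ and $\rho(w_{2})$ act separately.

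Consequently you need no reduction to $|\boldsymbol{W}|=2$ (which would itself require compatibility checks), no explicit contraction, and no perturbation lemma. Take any $R_{\boldsymbol{X}}$-linear homotopy inverse $G$ of $F=R_{\sigma}\otimes\Xi_{\sigma}:F_{\boldsymbol{X}}\to F_{\boldsymbol{W}}$, with homotopies $H,H'$. These exist because $F$ is a quasi-isomorphism of bounded-above $R_{\boldsymbol{X}}$-free complexes, $R_{\boldsymbol{W}}$ being free over $R_{\boldsymbol{X}}$. Then $\mathrm{Id}_{C}\otimes G$, $\mathrm{Id}_{C}\otimes H$ and $\mathrm{Id}_{C}\otimes H'$ are $R_{\boldsymbol{X}}$-linear by Lemma~\ref{lem:simple-appendix}. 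Because the differentials are plain tensor differentials, they exhibit $\mathrm{Id}_{C}\otimes F:\overline{C}'_{\boldsymbol{X}}\to\overline{C}'_{\boldsymbol{W}}$ as an $R_{\boldsymbol{X}}$-homotopy equivalence.

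The second gap is identifying the map. Your plan produces an inverse to ``the inclusion of $\overline{C}_{\boldsymbol{X}}$ into $\overline{C}_{\boldsymbol{X}}$ tensored with an extra factor''. That proves the proposition only if $\overline{\sigma}$ itself becomes that inclusion under the change-of-basis isomorphisms and your change of variables. Otherwise you only get an abstract equivalence $\overline{C}_{\boldsymbol{X}}\simeq\overline{C}_{\boldsymbol{W}}$. The paper supplies exactly this compatibility. The same formula $\mathrm{Id}_{C}\otimes R_{\sigma}\otimes\Xi_{\sigma}$ is also an $R_{\boldsymbol{X}}$-linear chain map $\overline{C}'_{\boldsymbol{X}}\to\overline{C}'_{\boldsymbol{W}}$. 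Moreover $\psi_{\boldsymbol{W}}\overline{\sigma}\psi_{\boldsymbol{X}}=\overline{\sigma}$, checked on $C\otimes 1\otimes\Xi_{\boldsymbol{X}}$ and propagated by $R_{\boldsymbol{X}}$-linearity. Hence $\psi_{\boldsymbol{X}}(\mathrm{Id}_{C}\otimes G)\psi_{\boldsymbol{W}}$ is a homotopy inverse of $\overline{\sigma}$.

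(Alternatively, your abelian argument could be transported to general $\mathsf{A}$ through the Yoneda embedding into $\mathbb{F}$-linear functors $\mathsf{A}^{\mathrm{op}}\to\mathsf{Mod}_{R_{\boldsymbol{X}}}$. There the terms of both resolutions become projective and quasi-isomorphism can be checked objectwise. That route also has to be set up, and you dismissed it as unavailable.)
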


\begin{defn}
Define the \emph{``color collapsing''} forgetful functor $\mathsf{Tang}_{\sigma}^{\mathrm{gen}}:\mathsf{Tang}_{\boldsymbol{e},\boldsymbol{W}}^{\mathrm{gen}}\to\mathsf{Tang}_{\boldsymbol{e},\boldsymbol{X}}^{\mathrm{gen}}$
as follows. The image of an object $(T,\vec{\boldsymbol{p}})\in\mathsf{Tang}_{\boldsymbol{e},\boldsymbol{W}}^{\mathrm{gen}}$
is $(T,\vec{\boldsymbol{q}})\in\mathsf{Tang}_{\boldsymbol{e},\boldsymbol{X}}^{\mathrm{gen}}$
where $\boldsymbol{q}_{x}:=\bigsqcup_{\sigma(w)=x}\boldsymbol{p}_{w}$.
Similarly, the image of a morphism $(\Sigma,\vec{A})$ is $(\Sigma,\vec{B})\in\mathrm{Mor}(\mathsf{Tang}_{\boldsymbol{e},\boldsymbol{X}}^{\mathrm{gen}})$
where $B_{x}:=\bigsqcup_{\sigma(w)=x}A_{w}$.
\end{defn}

\begin{prop}[Collapsing colors]
\label{prop:collapsing-colors}Let $(\Sigma,\vec{A}):(T,\vec{\boldsymbol{p}})\to(T',\vec{\boldsymbol{p}'})$
be a morphism in $\mathsf{Tang}_{\boldsymbol{e},\boldsymbol{W}}^{\mathrm{gen}}$,
and denote $\mathsf{Tang}_{\sigma}^{\mathrm{gen}}(\Sigma,\vec{A})$
as $(\Sigma,\vec{B}):(T,\vec{\boldsymbol{q}})\to(T',\vec{\boldsymbol{q}'})$.
The following square commutes.
\[\begin{tikzcd}[ampersand replacement=\&]
	{\overline{D}_{\mathsf{BN}}(T,\vec{\boldsymbol{q}})} \& {\overline{D}_{\mathsf{BN}}(T,\vec{\boldsymbol{p}})} \\
	{\overline{D}_{\mathsf{BN}} (T',\vec{\boldsymbol{q}'})} \& {\overline{D}_{\mathsf{BN}}(T',\vec{\boldsymbol{p}'})}
	\arrow["{\overline{\sigma}}", from=1-1, to=1-2]
	\arrow["{\overline{D}_{\mathsf{BN}}(\Sigma ,\vec{B})}"{description}, from=1-1, to=2-1]
	\arrow["{\overline{D}_{\mathsf{BN}}(\Sigma ,\vec{A})}"{description}, from=1-2, to=2-2]
	\arrow["{\overline{\sigma}}", from=2-1, to=2-2]
\end{tikzcd}\]
\end{prop}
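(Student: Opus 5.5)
Since $\overline{D}_{\mathsf{BN}}$ on morphisms of $\mathsf{Tang}_{\boldsymbol{e},\boldsymbol{W}}^{\mathrm{gen}}$ is defined as a composition of maps assigned to elementary movies, and $\mathsf{Tang}_{\sigma}^{\mathrm{gen}}$ sends an elementary movie to an elementary movie of the same type, it suffices to check that the square commutes on the chain level for each of the seven elementary movies. Commuting squares stack vertically, so the general case follows by composing.

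For the movies (\ref{enu:movie1})--(\ref{enu:movie6}), both vertical maps have the form $C_{\mathsf{BN}}(\Sigma)\otimes\mathrm{Id}$. Since $\overline{\sigma}=\mathrm{Id}_{C}\otimes R_{\sigma}\otimes\Xi_{\sigma}$ acts only on the tensor factor $R\otimes\Xi$, the two composites are both $C_{\mathsf{BN}}(\Sigma)\otimes R_{\sigma}\otimes\Xi_{\sigma}$, so the square commutes. One small point needs checking: $C_{\mathsf{BN}}(T)=C_{\mathsf{BN}}(T')$ as underlying objects for cap, cup and swap movies, and for the others $\overline{\sigma}$ is literally the same formula on the target. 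No compatibility with differentials is needed here, since we only compare two explicit maps.

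The main case is the slide movie. Suppose the sliding basepoint has color $w\in\boldsymbol{W}$, so that in the collapsed picture it has color $y=\sigma(w)$. The two vertical maps are $\mathrm{Id}+H\otimes\mathrm{Id}\otimes\xi_{w}$ and $\mathrm{Id}+H\otimes\mathrm{Id}\otimes\xi_{y}$, with the same $H$. The identity terms clearly agree, so we must show
\[
\Xi_{\sigma}(\xi_{y}\cdot\eta)=\xi_{w}\cdot\Xi_{\sigma}(\eta)\quad\text{for all monomials }\eta\in\Xi_{\boldsymbol{X}}.
\]
We write $\eta=\prod_{x}\xi_{x}^{-n_{x}}$. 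The case $n_{y}=0$ is straightforward: the left side is $0$ because $\xi_{y}\eta\in\xi_{y}\mathbb{F}[\xi_{y}]$ is killed in $\Xi$. On the right side, every term of $\Xi_{\sigma}(\eta)$ has $n_{w}=0$, so multiplying by $\xi_{w}$ kills it as well. If $n_{y}\ge1$, the left side is $\Xi_{\sigma}$ applied to the monomial with $n_{y}$ lowered by one. This is the sum over tuples $(m_{v})$ with $\sum_{\sigma(v)=y}m_{v}=n_{y}-1$ and fixed sums over the other fibers. The right side multiplies each term of $\Xi_{\sigma}(\eta)$ by $\xi_{w}$: terms with $n_{w}=0$ vanish, and terms with $n_{w}\ge1$ become the tuple with $n_{w}$ lowered by one. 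The map $(n_{v})\mapsto(n_{v}-\delta_{vw})$ is a bijection from tuples with $n_{w}\ge1$ and fiber sums $(n_{x})$ onto tuples with fiber sum $n_{y}-1$ over $\sigma^{-1}(y)$ and unchanged fiber sums elsewhere. Hence the two sums agree term by term.

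The expected obstacle is only this combinatorial bookkeeping of the truncated Laurent monomials in $\Xi$, in particular making sure the convention $\xi\mathbb{F}[\xi]=0$ is handled consistently on both sides. With that identity established, all elementary squares commute and the proposition follows.
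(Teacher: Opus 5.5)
Your proposal is correct and matches the paper's proof: check each elementary movie, with cases (1)--(6) immediate because $\overline{\sigma}$ acts only on the $R\otimes\Xi$ factor, and the slide case reducing to the identity $\xi_{w}\circ\Xi_{\sigma}=\Xi_{\sigma}\circ\xi_{\sigma(w)}$. The paper states this identity without proof, and your bijection $(n_{v})\mapsto(n_{v}-\delta_{vw})$ is a correct verification of it.
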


\begin{proof}
Routine check for each elementary movie. It is clear for (\ref{enu:movie1})-(\ref{enu:movie6}).
For the slide movie, let the relevant basepoints have color $w\in\boldsymbol{W}$.
The commutativity of the square follows from $\xi_{w}\circ\Xi_{\sigma}=\Xi_{\sigma}\circ\xi_{\sigma(w)}$.
\end{proof}
\begin{proof}[Proof of Proposition~\ref{prop:check-movie-moves-local}]
Let $\boldsymbol{W}:=\bigsqcup_{i=1}^{d}\boldsymbol{X}_{i}$ and
let $\sigma:\boldsymbol{W}\to\boldsymbol{X}$ be the obvious surjection.
Let $(\Sigma,\vec{A}_{\boldsymbol{W}}),(\Sigma',\vec{A'}_{\boldsymbol{W}})\in\mathrm{Mor}(\mathsf{Tang}_{\boldsymbol{e},\boldsymbol{W}}^{\mathrm{gen}})$
be the composition of the $(\Sigma_{i},\vec{A_{i}})$'s and $(\Sigma_{i}',\vec{A_{i}'})$'s
where we view the $\boldsymbol{X}_{i}$'s as pairwise disjoint subsets
of $\boldsymbol{W}$. Then $(\Sigma,\vec{A})=\mathsf{Tang}_{\sigma}^{\mathrm{gen}}(\Sigma,\vec{A}_{\boldsymbol{W}})$
and $(\Sigma',\vec{A'})=\mathsf{Tang}_{\sigma}^{\mathrm{gen}}(\Sigma',\vec{A'}_{\boldsymbol{W}})$,
and by Proposition~\ref{prop:composing-tangles-cobordisms} $\overline{D}_{\mathsf{BN}}(\Sigma,\vec{A}_{\boldsymbol{W}})$
and $\overline{D}_{\mathsf{BN}}(\Sigma',\vec{A'}_{\boldsymbol{W}})$
are $R_{\boldsymbol{W}}$-chain homotopic. By Proposition~\ref{prop:collapsing-colors}
\[
\overline{\sigma}\circ\overline{D}_{\mathsf{BN}}(\Sigma,\vec{A})=\overline{D}_{\mathsf{BN}}(\Sigma,\vec{A}_{\boldsymbol{W}})\circ\overline{\sigma}\sim\overline{D}_{\mathsf{BN}}(\Sigma',\vec{A'}_{\boldsymbol{W}})\circ\overline{\sigma}=\overline{\sigma}\circ\overline{D}_{\mathsf{BN}}(\Sigma',\vec{A'})
\]
are $R_{\boldsymbol{X}}$-chain homotopic. Proposition~\ref{prop:check-movie-moves-local}
follows since $\overline{\sigma}$ is an $R_{\boldsymbol{X}}$-chain
homotopy equivalence by Proposition~\ref{prop:sigma-cheq}.
\end{proof}

\subsection{$\overline{D}_{\mathsf{BN}}$-simple tangles}

\begin{figure}[h]
\begin{centering}
\includegraphics[scale=3.3]{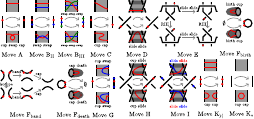}
\par\end{centering}
\caption{\label{fig:H23R3}A non-exhaustive list of decorated movie moves}
\end{figure}
In Subsection~\ref{subsec:Invariance-under-changing}, we will check
that $\overline{D}_{\mathsf{BN}}$ is invariant under certain moves,
some of which are drawn in Figure~\ref{fig:H23R3}. Move E of Figure~\ref{fig:H23R3},
i.e.\ that Reidemeister III and slide commute, turns out to be particularly
complicated to check by hand (although it is not impossible).\footnote{In fact, this is the only move that is complicated to check; all the
other moves are straightforward.} Instead of checking all the moves by hand, we adapt Bar-Natan's argument
\cite[Section~8.3]{BN1} (compare \cite{MR2171235}) to our setting.
\begin{defn}[{\cite[Lemma~8.5]{BN1}}]
An $\boldsymbol{X}$-pointed tangle $(T,\vec{\boldsymbol{p}})\in\mathsf{Tang}_{\boldsymbol{e},\boldsymbol{X}}^{\mathrm{gen}}$
is \emph{$\overline{D}_{\mathsf{BN}}$-simple} if every bidegree $(0,0)$
$R_{\boldsymbol{X}}$-linear homotopy equivalence of $\overline{D}_{\mathsf{BN}}(T,\vec{\boldsymbol{p}})$
with itself is $R_{\boldsymbol{X}}$-homotopic to the identity.
\end{defn}

\begin{lem}
Any $\boldsymbol{X}$-pointed tangle $(T,\vec{\boldsymbol{p}})\in\mathsf{Tang}_{\boldsymbol{e},\boldsymbol{X}}^{\mathrm{gen}}$
such that $T$ is a pairing (\cite[Lemma~8.6]{BN1}) is $\overline{D}_{\mathsf{BN}}$-simple.
In fact, the identity is the unique nonzero, bidegree $(0,0)$ $R_{\boldsymbol{X}}$-linear
endomorphism of $\overline{D}_{\mathsf{BN}}(T,\vec{\boldsymbol{p}})=C_{\mathsf{BN}}(T,\vec{\boldsymbol{p}})\otimes_{\mathbb{F}}(R_{\boldsymbol{X}}\otimes_{\mathbb{F}}\Xi_{\boldsymbol{X}})$.
\end{lem}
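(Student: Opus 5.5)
The plan is to compute all bidegree $(0,0)$ $R_{\boldsymbol{X}}$-linear chain endomorphisms of $\overline{D}_{\mathsf{BN}}(T,\vec{\boldsymbol{p}})$ directly and show that they are $0$ and $\mathrm{Id}$. $\overline{D}_{\mathsf{BN}}$-simplicity then follows at once. A homotopy equivalence is nonzero in the homotopy category (the complex is nonzero), so it cannot equal $0$ on the nose. By the computation it is therefore exactly the identity, with no homotopy needed.

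\emph{Step 1: structure of the complex.} Since $T$ is a pairing, it has no crossings and no closed components. So $C_{\mathsf{BN}}(T)$ is the single object $T$ in homological degree $0$ with zero differential. Hence $\overline{D}_{\mathsf{BN}}(T,\vec{\boldsymbol{p}})=T\otimes R_{\boldsymbol{X}}\otimes\Xi_{\boldsymbol{X}}$, with differential $\sum_{x}(\rho(x)\otimes\mathrm{Id}+\mathrm{Id}_{T}\otimes x)\otimes\xi_{x}$ as in Definition~\ref{def:preferred-resolution}. Here $\rho(x)$ is a sum of dotted identity cobordisms.

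\emph{Step 2: degree constraints.} By $R_{\boldsymbol{X}}$-linearity, an endomorphism $\Phi$ is determined by its components from $T\otimes1\otimes\xi^{-a}$ to $T\otimes r\otimes\xi^{-b}$, where $r$ is a monomial of degree $k$ and each component is a morphism $f\in\mathrm{End}_{\mathsf{BN}}(T)$ of some $q$-degree $d$. The source has bidegree $(-|a|,-2|a|)$ and the target has bidegree $(-|b|,-2|b|-2k)$ shifted by $d$. Preserving the homological grading forces $|a|=|b|$, and preserving the quantum grading forces $d=2k$.

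For a pairing, $\mathrm{End}_{\mathsf{BN}}(T)$ is spanned by identity cobordisms carrying dots on the arcs, since closed components and nontrivial genus reduce via Bar-Natan's relations. So it lives in $q$-degrees $\le0$, and its degree-$0$ part is $\mathbb{F}\cdot\mathrm{Id}_{T}$. Since $k\ge0$, the condition $d=2k$ gives $d=k=0$. Hence $\Phi=\mathrm{Id}_{T}\otimes\mathrm{Id}_{R_{\boldsymbol{X}}}\otimes M$ for an $\mathbb{F}$-linear map $M:\Xi_{\boldsymbol{X}}\to\Xi_{\boldsymbol{X}}$ that preserves the grading.

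\emph{Step 3: the chain condition.} Now impose that $\Phi$ commutes with the differential. Compare the components whose $R_{\boldsymbol{X}}$-part is exactly the monomial $x$ times the input. The $\rho(x)$-terms carry a nontrivial dot, and the terms for different colors have different $R$-monomials, so these components separate cleanly. They give $\mathrm{Id}_{T}\otimes x\otimes(\xi_{x}M+M\xi_{x})=0$, hence $\xi_{x}M=M\xi_{x}$ for every $x\in\boldsymbol{X}$. This separation of components is the one point I would double-check carefully; everything else is bookkeeping.

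Finally, the degree-$0$ part of $\Xi_{\boldsymbol{X}}$ is $\mathbb{F}\cdot1$, so $M(1)=c\cdot1$ for some $c\in\mathbb{F}$. For a monomial $\xi^{-a}$, the degree-$(-a)$ part of $\Xi_{\boldsymbol{X}}$ is spanned by monomials $\xi^{-a'}$ with $|a'|=|a|$. Applying $\xi^{a}$ kills every such monomial except $\xi^{-a}$ itself, and commutativity gives $\xi^{a}M(\xi^{-a})=M(\xi^{a}\xi^{-a})=c$. Therefore $M(\xi^{-a})=c\,\xi^{-a}$. So $M=c\,\mathrm{Id}$ and $\Phi\in\{0,\mathrm{Id}\}$, which proves both claims.
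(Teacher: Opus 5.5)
Your overall route is the paper's: the $q$-degree bound on $\mathrm{End}_{\mathsf{BN}}(T)$ forces $\Phi=\mathrm{Id}_T\otimes\mathrm{Id}_{R_{\boldsymbol{X}}}\otimes M$. This corresponds to the paper's maps $f_n$ on $V_n$, the span of monomials of total degree $-n$ in $\Xi_{\boldsymbol{X}}$. The chain condition then forces $\xi_x M=M\xi_x$, the paper's $\xi_x f_n=f_{n-1}\xi_x$. Your Step~3 separation is fine. In fact the $R_{\boldsymbol{X}}$-monomial alone separates the terms, because the $\rho(x)$-terms keep $R$-part $1$; the dots play no role.

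\textbf{The gap is in your last deduction.} Write $M(\xi^{-a})=\sum_{|a'|=|a|}c_{a'}\,\xi^{-a'}$. Applying $\xi^{a}$ kills exactly the terms with $a'\neq a$. So $\xi^{a}M(\xi^{-a})=c$ only shows $c_a=c$. It says nothing about the off-diagonal coefficients, and those are precisely what must vanish. For example, with $\boldsymbol{X}=\{x,y\}$, the map $M(\xi_x^{-1})=c\,\xi_x^{-1}+e\,\xi_y^{-1}$ satisfies $\xi_xM(\xi_x^{-1})=c$ for every $e$.

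The fix uses the same commutation relation with the other monomials. For $a'\neq a$ with $|a'|=|a|$, some $a'_x>a_x$, so $\xi^{a'}\xi^{-a}=0$. Hence $c_{a'}\cdot 1=\xi^{a'}M(\xi^{-a})=M(\xi^{a'}\xi^{-a})=M(0)=0$. Equivalently, as in the paper, replace $M$ by $M-c\,\mathrm{Id}$ and induct on $n$. If it vanishes on $V_{n-1}$, then its image on $V_n$ lies in $\bigcap_x\ker\xi_x=\mathbb{F}\cdot 1$, which meets $V_n$ trivially for $n\ge 1$.

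A minor point: ``the complex is nonzero'' does not imply that a homotopy equivalence is nonzero in the homotopy category, since a nonzero complex can be contractible. You don't need this claim. If $\Phi=0$ were a homotopy equivalence with inverse $g$, then $\mathrm{Id}\sim g\circ 0=0=\Phi$. So in either case $\Phi\sim\mathrm{Id}$, which is all $\overline{D}_{\mathsf{BN}}$-simplicity requires.
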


\begin{proof}
First, \cite[Lemma~8.6]{BN1} shows that the identity is the unique
nonzero, bidegree $(0,0)$ endomorphism of $C_{\mathsf{BN}}(T)$.
Also, a similar argument shows that $\mathrm{End}_{\mathsf{BN}_{\boldsymbol{e}}}(C_{\mathsf{BN}}(T))$
is supported in bidegree $(0,q)$ for $q\le0$. Hence, the identity
is the unique nonzero, bidegree $(0,0)$ $R_{\boldsymbol{X}}$-linear
endomorphism of $C_{\mathsf{BN}}(T)\otimes_{\mathbb{F}}R_{\boldsymbol{X}}$.

Let $\varphi$ be a nonzero, bidegree $(0,0)$ $R_{\boldsymbol{X}}$-linear
endomorphism of $\overline{D}_{\mathsf{BN}}(T,\vec{\boldsymbol{p}})$.
For $n\ge0$, let $V_{n}$ be the $\mathbb{F}$-vector subspace of
$\Xi_{\boldsymbol{X}}$ spanned by $\{\prod_{x\in\boldsymbol{X}}\xi_{x}^{-n_{x}}:\sum_{x\in\boldsymbol{X}}n_{x}=n\}$.
Since $\overline{D}_{\mathsf{BN}}(T,\vec{\boldsymbol{p}})=\bigoplus_{n\ge0}C_{\mathsf{BN}}(T)\otimes_{\mathbb{F}}R_{\boldsymbol{X}}\otimes_{\mathbb{F}}V_{n}$,
we have that $\varphi$ consists of maps ${\rm Id}_{C_{\mathsf{BN}}(T)}\otimes{\rm Id}_{R_{\boldsymbol{X}}}\otimes f_{n}$
for some $\mathbb{F}$-linear maps $f_{n}:V_{n}\to V_{n}$ of bidegree
$(0,0)$, by the previous paragraph. Now, $\partial\varphi=\varphi\partial$
in particular gives 
\[
\sum_{x}{\rm Id}_{C_{\mathsf{BN}}(T)}\otimes x\otimes(\xi_{x}f_{n})=\sum_{x}{\rm Id}_{C_{\mathsf{BN}}(T)}\otimes x\otimes(f_{n-1}\xi_{x}),
\]
and so $\xi_{x}f_{n}=f_{n-1}\xi_{x}$ for all $x,n$.

We claim that if $f_{0}=0$, then $f_{n}=0$ for all $n$, and if
$f_{0}=\mathrm{Id}$, then $f_{n}=\mathrm{Id}$ for all $n$. If $f_{0}=\mathrm{Id}$,
then replace $f_{n}$ by $f_{n}-\mathrm{Id}_{V_{n}}$; hence we may
assume $f_{0}=0$. Let us induct on $n$. Let $n\ge1$ and assume
that $f_{n-1}=0$; then $\xi_{x}f_{n}=0$ for all $x$. Now, $\bigcap_{x\in\boldsymbol{X}}\mathrm{ker}(\xi_{x})$
is disjoint from $V_{n}$ since it is the dimension $1$ subspace
spanned by $1\in\Xi_{\boldsymbol{X}}$. Hence $f_{n}=0$ and the claim
follows. Thus if $\varphi$ is nonzero, then it is the identity, i.e.\ the
lemma follows.
\end{proof}
\begin{prop}[{\cite[Lemmas~8.7-8.9]{BN1}}]
\label{prop:dkh-simple}If $(T,\vec{\boldsymbol{p}})$ is $\overline{D}_{\mathsf{BN}}$-simple,
then any tangle obtained from it by adding a crossing along the boundary
is $\overline{D}_{\mathsf{BN}}$-simple. If there exists some $(\Sigma,\vec{A}):(T,\vec{\boldsymbol{p}})\to(T',\vec{\boldsymbol{p}}')$
such that $\Sigma$ is induced by an isotopy $T\to T'$, then $(T',\vec{\boldsymbol{p}'})$
is also $\overline{D}_{\mathsf{BN}}$-simple. Moreover, any two bidegree
$(0,0)$ $R_{\boldsymbol{X}}$-homotopy equivalences $f,g:\overline{D}_{\mathsf{BN}}(T,\vec{\boldsymbol{p}})\to\overline{D}_{\mathsf{BN}}(T',\vec{\boldsymbol{p}'})$
are $R_{\boldsymbol{X}}$-homotopic.
\end{prop}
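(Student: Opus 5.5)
We follow Bar-Natan's proofs of \cite[Lemmas~8.7-8.9]{BN1} and check that each step is compatible with the $R_{\boldsymbol{X}}$-action and the $\Xi_{\boldsymbol{X}}$-tensor factor.

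\textbf{The last two assertions.} These are formal and we treat them first. Suppose $(\Sigma,\vec{A})$ is induced by an isotopy $T\to T'$. By Lemma~\ref{lem:homotopy-equivalence}, $F:=\overline{D}_{\mathsf{BN}}(\Sigma,\vec{A})$ is a bidegree $(0,0)$ $R_{\boldsymbol{X}}$-homotopy equivalence with a bidegree $(0,0)$ $R_{\boldsymbol{X}}$-homotopy inverse $G$. Let $\psi$ be a bidegree $(0,0)$ $R_{\boldsymbol{X}}$-self-equivalence of $\overline{D}_{\mathsf{BN}}(T',\vec{\boldsymbol{p}'})$. Then $G\psi F$ is a bidegree $(0,0)$ self-equivalence of $\overline{D}_{\mathsf{BN}}(T,\vec{\boldsymbol{p}})$, hence homotopic to the identity by simplicity. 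Therefore $\psi\sim FG\psi FG\sim F G\sim\mathrm{Id}$. For two equivalences $f,g$ as in the statement, $g^{-1}f$ is a bidegree $(0,0)$ self-equivalence of the simple object $\overline{D}_{\mathsf{BN}}(T,\vec{\boldsymbol{p}})$, so $g^{-1}f\sim\mathrm{Id}$ and hence $f\sim g$.

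\textbf{Adding a crossing: setup.} Let $T'=T\otimes X$, where $X$ is a crossing glued along the boundary via a planar arc diagram, and the basepoints all lie on the $T$ part. Then
\[
C_{\mathsf{BN}}(T')=\mathrm{Cone}\bigl(C_{\mathsf{BN}}(T\otimes X_0)\to C_{\mathsf{BN}}(T\otimes X_1)\bigr)
\]
up to shifts, and tensoring with $R_{\boldsymbol{X}}\otimes\Xi_{\boldsymbol{X}}$ with the preferred differential commutes with forming the cone. The reason is that the basepoint actions lie on $T$ and commute with the saddle map, by Lemma~\ref{lem:homotopy-identities}, since $H_c$ commutes with $r$ for $r$ away from the crossing. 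Hence $\overline{D}_{\mathsf{BN}}(T')$ is the cone of $\overline{D}_{\mathsf{BN}}(T\otimes X_0)\to\overline{D}_{\mathsf{BN}}(T\otimes X_1)$ in $K^{-}_{R_{\boldsymbol{X}}}$. The pieces $T\otimes X_i$ are planar compositions of $T$ with a pairing; by Proposition~\ref{prop:composing-tangles-cobordisms}, $\overline{D}_{\mathsf{BN}}(T\otimes X_i)=\overline{D}_{\mathsf{BN}}(T)\otimes C_{\mathsf{BN}}(X_i)$.

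\textbf{Adding a crossing: Bar-Natan's argument.} Bar-Natan's argument then goes through verbatim. Let $\varphi$ be a bidegree $(0,0)$ self-equivalence of $\overline{D}_{\mathsf{BN}}(T')$. Write $\varphi$ as a $2\times2$ matrix with respect to the cone decomposition. Using the grading argument, with the off-diagonal term from $X_1$ to $X_0$ of wrong degree modulo homotopy, we reduce to the diagonal entries. One then uses Bar-Natan's observation that the closures and adjunctions relating $T\otimes X_0$, $T\otimes X_1$ and $T$ (via the delooping and Gaussian elimination isomorphisms inside $\mathsf{BN}$) identify bidegree $(0,0)$ endomorphisms of $\overline{D}_{\mathsf{BN}}(T)\otimes C_{\mathsf{BN}}(X_i)$ with those of $\overline{D}_{\mathsf{BN}}(T)$, up to $q$-shifts. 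All of these operations act only on the Bar-Natan factor and so are $R_{\boldsymbol{X}}$-linear, and by simplicity the diagonal entries are homotopic to $\epsilon_i\,\mathrm{Id}$ for some $\epsilon_i\in\mathbb{F}$. Compatibility with the cone differential, which is nonzero, forces $\epsilon_0=\epsilon_1$. Invertibility of $\varphi$ then forces $\epsilon_0=\epsilon_1=1$. Finally, $\varphi$ minus the identity is a strictly upper triangular map, which is null-homotopic by the same degree considerations as in \cite[Lemma~8.7]{BN1}.

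\textbf{Main obstacle.} The main obstacle is this last adjunction/degree step. The $\Xi_{\boldsymbol{X}}$ factor is infinite, and the differential $\sum_x(\rho(x)\otimes1+1\otimes x)\otimes\xi_x$ has components that lower the $\xi$-filtration, so the homotopies need not be finite sums. I would handle this by filtering by $\xi$-degree, using the decomposition $\bigoplus_n(\cdot)\otimes V_n$ from the preceding lemma. I would construct the homotopies inductively in $n$, using at each stage only the undecorated statement that $\mathrm{End}(C_{\mathsf{BN}})$ is supported in $q\le0$. The induction converges because the complexes are bounded above, so each homological degree sees only finitely many $n$.
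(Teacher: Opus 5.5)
Your treatment of the last two assertions is correct and is essentially the paper's argument. The paper writes out only the final sentence, exactly as you do, and defers the rest to \cite[Section~8.3]{BN1} together with Lemma~\ref{lem:homotopy-equivalence}. The gap is in the ``adding a crossing'' step: the cone/matrix/degree argument you sketch is not Bar-Natan's, and as written it does not work.

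Concretely, there are three failures.
\begin{enumerate}
\item Write a chain endomorphism $\varphi$ of $\mathrm{Cone}(f\colon A\to B)$ with entries $a\colon A\to A$, $b\colon B\to B$, $k\colon A\to B$ and backward entry $h\colon B\to A$. The chain-map condition gives $[\partial,a]=hf$ and $[\partial,b]=fh$. So the diagonal entries are not chain maps unless $h$ is first removed up to homotopy. But $h$ is a chain map $C_{\mathsf{BN}}(T\otimes X_1)\to C_{\mathsf{BN}}(T\otimes X_0)$ (up to a grading shift), and for a general $T$, whose complex is spread over many homological degrees, no degree consideration forces it to be null-homotopic.
\item Only one of the two resolutions $T\otimes X_i$ is isotopic to $T$. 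The other is $T$ with two adjacent endpoints capped off and a new cup attached. Adjunction relates its endomorphisms to those of the capped-off tangle, about which simplicity of $T$ says nothing.
\item Simplicity only constrains bidegree $(0,0)$ self-\emph{equivalences}. A diagonal entry of an automorphism of a cone need not be one, so ``by simplicity the diagonal entries are $\epsilon_i\mathrm{Id}$'' does not follow even for the resolution isotopic to $T$.
\end{enumerate}
The closing ``strictly upper triangular, hence null-homotopic'' step is likewise unsupported.

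The missing idea is Bar-Natan's formal trick with the inverse crossing. Let $\bar X$ be the crossing such that $X\bar X$ and $\bar X X$ are each related to two trivial strands by an undecorated Reidemeister~II movie, and put no basepoints on $X,\bar X$. By Proposition~\ref{prop:composing-tangles-cobordisms}, $\overline{D}_{\mathsf{BN}}(T\otimes X\otimes\bar X)=\overline{D}_{\mathsf{BN}}(T)\otimes C_{\mathsf{BN}}(X)\otimes C_{\mathsf{BN}}(\bar X)$, and gluing is functorial on $K^{-}_{R_{\boldsymbol{X}}}$.

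Let $\varphi$ be a bidegree $(0,0)$ self-equivalence of $\overline{D}_{\mathsf{BN}}(T\otimes X)$.
\begin{enumerate}
\item $\varphi\otimes\mathrm{Id}_{C_{\mathsf{BN}}(\bar X)}$ is a self-equivalence of $\overline{D}_{\mathsf{BN}}(T\otimes X\otimes\bar X)$. Conjugating by the $R_{\boldsymbol{X}}$-linear bidegree $(0,0)$ Reidemeister~II equivalence of Lemma~\ref{lem:homotopy-equivalence} turns it into a self-equivalence of $\overline{D}_{\mathsf{BN}}(T)$. Simplicity of $T$ then gives $\varphi\otimes\mathrm{Id}_{C_{\mathsf{BN}}(\bar X)}\sim\mathrm{Id}$.
\item Tensoring with $\mathrm{Id}_{C_{\mathsf{BN}}(X)}$ gives $\varphi\otimes\mathrm{Id}_{C_{\mathsf{BN}}(\bar X X)}\sim\mathrm{Id}$.
\item Let $\rho=\mathrm{Id}\otimes C_{\mathsf{BN}}(\mathrm{R2})$ be the Reidemeister~II equivalence acting on the $\bar X X$ factor, with homotopy inverse $\rho'$. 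By the interchange law, $\rho$ strictly intertwines $\varphi\otimes\mathrm{Id}$ with $\varphi$. Hence $\varphi\sim\varphi\rho\rho'=\rho(\varphi\otimes\mathrm{Id})\rho'\sim\rho\rho'\sim\mathrm{Id}$.
\end{enumerate}
Every map here is an undecorated Bar-Natan map tensored with something already controlled. So no filtration or convergence argument in the $\Xi_{\boldsymbol{X}}$ direction is needed, and the ``main obstacle'' in your last paragraph does not arise.
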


\begin{proof}
This follows from the same arguments as \cite[Section~8.3]{BN1},
using Lemma~\ref{lem:homotopy-equivalence}. E.g.\ for the last
sentence: let $\varphi:\overline{D}_{\mathsf{BN}}(T,\vec{\boldsymbol{p}})\to\overline{D}_{\mathsf{BN}}(T',\vec{\boldsymbol{p}'})$
and $\psi:\overline{D}_{\mathsf{BN}}(T',\vec{\boldsymbol{p}'})\to\overline{D}_{\mathsf{BN}}(T,\vec{\boldsymbol{p}})$
be bidegree $(0,0)$ $R_{\boldsymbol{X}}$-homotopy inverses. Since
$(T,\vec{\boldsymbol{p}})$ is $\overline{D}_{\mathsf{BN}}$-simple,
$\psi\circ f,\psi\circ g\sim{\rm Id}_{\overline{D}_{\mathsf{BN}}(T,\vec{\boldsymbol{p}})}$,
and so they are homotopic. Hence, $f\sim\varphi\circ\psi\circ f\sim\varphi\circ\psi\circ g\sim g$.
\end{proof}

\subsection{\label{subsec:Invariance-under-changing}Invariance under changing
$\vec{A}$}

We first study which moves need to be checked to show invariance under
homotoping $\vec{A}$, which may be of independent interest. Then,
we will show that $\overline{D}_{\mathsf{BN}}(\Sigma,\vec{A})$ for
fixed $\Sigma$ only depends on the homology class of $\vec{A}$ by
checking two extra moves.

It will be convenient to split \ref{enu:conde-j} into six conditions
\ref{enu:conde}-\ref{enu:condj}. \ref{enu:conde}-\ref{enu:condh}
ensure that the Reidemeister, Morse, cap, cup, swap, and slide loci
are pairwise distinct; \ref{enu:condi} and \ref{enu:condj} ensure
that they are supported in pairwise distinct times.
\begin{enumerate}
[label=(Cond~\Alph*)]\addtocounter{enumi}{4}
\item \label{enu:conde}$A$ is disjoint from the Reidemeister loci.
\item $A$ is disjoint from the Morse loci.
\item The cap, cup, and slide loci are not double points.
\item \label{enu:condh}$Cr$ does not intersect the cap, cup, swap loci.
\item \label{enu:condi}For each crossing $c$, the slide loci for $c$
are supported in distinct times.
\item \label{enu:condj}Distant Reidemeister, Morse, cap, cup, swap, and
slide loci are supported in distinct times.
\end{enumerate}

For each of \ref{enu:conda}-\ref{enu:condj}, failing it is a codimension
$2$ condition. Hence, given a smooth homotopy $(\varphi_{t}):A\to\Sigma$
between two smooth maps $\varphi_{0},\varphi_{1}:A\to\Sigma$ that
satisfy \ref{enu:conda}-\ref{enu:condj}, we may homotope $(\varphi_{t})$
rel.\ $\partial$ such that for each of \ref{enu:conda}-\ref{enu:condj},
$(\varphi_{t})$ fails it only at a finite number of points. Examining
$\varphi_{t}$ for $t$ that are near these points, we see that we
are left to show that $\overline{D}_{\mathsf{BN}}(\Sigma_{1},\vec{A}_{1})$,
$\overline{D}_{\mathsf{BN}}(\Sigma_{2},\vec{A}_{2})$ are the same
for $(\Sigma_{1},\vec{A_{1}})$ and $(\Sigma_{2},\vec{A_{2}})$ that
differ in the following ways: (some (non-exhaustive) examples are
given in Figure~\ref{fig:H23R3}.)
\begin{enumerate}
[label=(Move~\Alph*)]
\item Reidemeister I type moves (Move A of Figure~\ref{fig:H23R3})
\item Reidemeister II and III type moves (Moves $\mathrm{B_{II}}$ and $\mathrm{B_{III}}$
of Figure~\ref{fig:H23R3})
\item Creating or annihilating a cap and cup locus 
\item Creating or annihilating two cancelling slide moves
\item An arc goes across a Reidemeister locus
\item \label{enu:move-rm-strand}An arc goes across a Morse locus
\item An arc goes across a cap, cup, swap, or slide locus
\item A cap, cup, or swap locus goes across $Cr$
\item Swapping the order of two slide movies for the same crossing that
are adjacent in time
\item \label{enu:move-far-commutation}Swapping the order of two distant
Reidemeister, Morse, cap, cup, swap, or slide movies that are adjacent
in time
\end{enumerate}
Finally, to check that $\overline{D}_{\mathsf{BN}}(\Sigma,\vec{A})$
only depends on the mod $2$ homology class of $\vec{A}$, we need
to check the following:
\begin{enumerate}
[resume, label=(Move~\Alph*)]
\item \label{enu:move-homology}Moves $\mathrm{K}_{)(}$ and $\mathrm{K}_{\circ}$
of Figure~\ref{fig:H23R3}
\end{enumerate}
All the moves except \ref{enu:move-rm-strand} and \ref{enu:move-far-commutation}
are supported in a region whose underlying surface is given by an
undecorated Reidemeister movie, or $I\times T$ where $T$ is either
a $2$-ended tangle without any crossings or a $4$-ended tangle with
one crossing. Hence, by Lemma~\ref{lem:homotopy-equivalence}, for
all these cases, the corresponding cobordism map $\overline{D}_{\mathsf{BN}}$
is a homotopy equivalence, and so $\overline{D}_{\mathsf{BN}}$ is
invariant under these moves by Proposition~\ref{prop:dkh-simple}.

\ref{enu:move-far-commutation} and \ref{enu:move-rm-strand} are
easy to check. \ref{enu:move-far-commutation} strictly commutes on
the chain level by Proposition~\ref{prop:composing-tangles-cobordisms}.
Assuming the moves we have checked, \ref{enu:move-rm-strand} for
Morse loci reduces to Moves $\mathrm{F_{birth}}$, $\mathrm{F_{band}}$,
and $\mathrm{F_{death}}$ of Figure~\ref{fig:H23R3}, which strictly
commute on the chain level.

\subsection{\label{subsec:The-general-case}The general case}

In this subsection we complete the proof of \ref{enu:step3} for $\overline{D}_{\mathsf{BN}}$.
By Carter and Saito \cite{CS} and \ref{enu:step2} for $\overline{D}_{\mathsf{BN}}$,
we reduce to showing $\overline{D}_{\mathsf{BN}}(\Sigma_{1},\vec{A_{1}})\sim\overline{D}_{\mathsf{BN}}(\Sigma_{2},\vec{A_{2}})$
for generic $\boldsymbol{X}$-decorated tangle cobordisms $(\Sigma_{1},\vec{A_{1}})$
and $(\Sigma_{2},\vec{A_{2}})$ such that they represent the same
morphism in $\mathsf{Tang}_{\boldsymbol{e},\boldsymbol{X}}$, and
$\Sigma_{1}$ and $\Sigma_{2}$ are compositions of undecorated Reidemeister
and Morse movies that are related by a single far commutation or movie
move. Assume that these moves occur in $[a,b]\times D$ for some region
$D\subset D^{3}$.

We say that $\vec{A_{1}}$ \emph{does not interact with the far commutation
or movie move} if $\vec{A_{1}}$ does not intersect with $[a,b]\times D$
and is vertical between the Reidemeister and Morse movies that are
involved in the far commutation or movie move.
\begin{lem}
\label{lem:no-interaction}If $\vec{A_{1}}$ does not interact with
the far commutation or movie move, and if $(\Sigma_{2},\vec{A}_{2}')$
is the decorated cobordism obtained by performing the far commutation
or movie move, then $\overline{D}_{\mathsf{BN}}(\Sigma_{1},\vec{A_{1}})$
and $\overline{D}_{\mathsf{BN}}(\Sigma_{2},\vec{A_{2}}')$ are $R_{\boldsymbol{X}}$-homotopic.
\end{lem}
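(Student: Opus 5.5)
Since $\vec{A_{1}}$ does not interact with the move, the decorated cobordism $(\Sigma_{1},\vec{A_{1}})$ factors as a composition $(\Sigma_{1},\vec{A_{1}})=\mathcal{B}\circ\mathcal{M}_{1}\circ\mathcal{A}$. Here $\mathcal{A}$ and $\mathcal{B}$ are the portions before time $a$ and after time $b$. The middle portion $\mathcal{M}_{1}$ is supported in $[a,b]\times D^{3}$, and within it the decoration $\vec{A_{1}}$ is vertical and lies outside $[a,b]\times D$. Performing the far commutation or movie move only changes $\mathcal{M}_{1}$ to some $\mathcal{M}_{2}$, so $(\Sigma_{2},\vec{A_{2}}')=\mathcal{B}\circ\mathcal{M}_{2}\circ\mathcal{A}$. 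Because $\overline{D}_{\mathsf{BN}}$ is a functor on $\mathsf{Tang}_{\boldsymbol{e},\boldsymbol{X}}^{\mathrm{gen}}$ (Step~2), it suffices to show $\overline{D}_{\mathsf{BN}}(\mathcal{M}_{1})\sim\overline{D}_{\mathsf{BN}}(\mathcal{M}_{2})$ $R_{\boldsymbol{X}}$-linearly.

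To do this, I would decompose $\mathcal{M}_{i}$ via a planar arc diagram with two inputs: the region $D$, and its complement $D^{3}\setminus D$. (If $D$ is not a ball, first isotope or shrink it to a ball, or use several inputs.) In the complement, the tangle together with its basepoints is unchanged and the decoration is vertical, so that piece is an identity cobordism $\mathrm{Id}_{(T_{0},\vec{\boldsymbol{p}})}$ carrying all the colors $\boldsymbol{X}$. Inside $D$, $\mathcal{M}_{i}$ is an undecorated far commutation or movie move $\Sigma_{i}^{D}$ with no basepoints; regard it as a morphism of $\mathsf{Tang}_{\boldsymbol{e}_{1},\emptyset}^{\mathrm{gen}}$. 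Since the color sets $\emptyset$ and $\boldsymbol{X}$ are disjoint, Proposition~\ref{prop:composing-tangles-cobordisms} applies directly, and Proposition~\ref{prop:check-movie-moves-local} is not even needed. It gives
\[
\overline{D}_{\mathsf{BN}}(\mathcal{M}_{i})=\overline{D}_{\mathsf{BN}}(\Sigma_{i}^{D})\otimes\overline{D}_{\mathsf{BN}}(\mathrm{Id}).
\]
With empty color set, $R_{\emptyset}=\Xi_{\emptyset}=\mathbb{F}$, so $\overline{D}_{\mathsf{BN}}(\Sigma_{i}^{D})=C_{\mathsf{BN}}(\Sigma_{i}^{D})$. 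By Bar-Natan's invariance under far commutation and movie moves (Step~3 in the undecorated case, \cite{BN1}), we have $C_{\mathsf{BN}}(\Sigma_{1}^{D})\sim C_{\mathsf{BN}}(\Sigma_{2}^{D})$ via a homotopy $h$. The functor of Proposition~\ref{prop:composing-tangles-cobordisms} descends to homotopy categories. Concretely, $h\otimes\mathrm{Id}$ is an $R_{\boldsymbol{X}}$-linear homotopy, because the $R_{\boldsymbol{X}}$-action and the $\xi_{x}$-terms of the differential act only on the second tensor factor and commute with $h\otimes\mathrm{Id}$. This yields the desired homotopy, and precomposing and postcomposing with $\overline{D}_{\mathsf{BN}}(\mathcal{A})$ and $\overline{D}_{\mathsf{BN}}(\mathcal{B})$ finishes the argument.

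The step needing most care is the decomposition. Two points must be checked. First, the tensor product $\overline{D}_{\mathsf{BN}}(\Sigma^{D})\otimes\overline{D}_{\mathsf{BN}}(\mathrm{Id})$ must really agree with $\overline{D}_{\mathsf{BN}}(\mathcal{M}_{i})$ as $R_{\boldsymbol{X}}$-complexes, including the $\xi_{x}$-part of the differential. This holds because all basepoints lie outside $D$, so $\rho(x)$ is the identity on the $D$-factor. Second, the elementary movies composing $\mathcal{M}_{i}$ must all be Reidemeister or Morse movies supported in $D$, or planar isotopies, with no slide, cap, cup, or swap loci in $[a,b]\times D^{3}$. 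This is exactly what the verticality hypothesis guarantees.
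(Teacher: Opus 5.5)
Your proposal is correct and is essentially the paper's argument: restrict to the time slab $[a,b]\times D^{3}$, where both maps are $C_{\mathsf{BN}}(\Sigma_{i})\otimes\mathrm{Id}_{R_{\boldsymbol{X}}\otimes\Xi_{\boldsymbol{X}}}$, and tensor Bar-Natan's homotopy with the identity. Your additional spatial decomposition via Proposition~\ref{prop:composing-tangles-cobordisms} makes explicit a point the paper leaves implicit: why the homotopy commutes with the basepoint actions, and hence with the $\rho(x)\otimes\xi_{x}$ terms of the differential. In that decomposition, $D$ is a colorless input, and its complement (cut along an arc avoiding crossings and basepoints so that it becomes a disk) carries all the colors.
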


\begin{proof}
The decorated cobordisms $(\Sigma_{1},\vec{A_{1}})$ and $(\Sigma_{2},\vec{A_{2}'})$
agree outside of $[a,b]\times D^{3}$. Hence, we may restrict to $[a,b]\times D^{3}$,
in which case 
\[
\overline{D}_{\mathsf{BN}}(\Sigma_{1},\vec{A_{1}})=C_{\mathsf{BN}}(\Sigma_{1})\otimes\mathrm{Id}_{R_{\boldsymbol{X}}\otimes\Xi_{\boldsymbol{X}}}\ \mathrm{and}\ \overline{D}_{\mathsf{BN}}(\Sigma_{2},\vec{A_{2}'})=C_{\mathsf{BN}}(\Sigma_{2})\otimes\mathrm{Id}_{R_{\boldsymbol{X}}\otimes\Xi_{\boldsymbol{X}}},
\]
and so they are $R_{\boldsymbol{X}}$-homotopic since $C_{\mathsf{BN}}(\Sigma_{1})$
and $C_{\mathsf{BN}}(\Sigma_{2})$ are homotopic.
\end{proof}
Now, the general case follows from the following lemma.
\begin{lem}
\label{lem:isotope-as}Given any $\vec{A}_{1}$, there exists an $\vec{A_{1}'}$
that is isotopic rel.\ $\partial$ to $\vec{A_{1}}$ such that $\vec{A_{1}'}$
does not interact with the far commutation or movie move.
\end{lem}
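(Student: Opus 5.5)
We push $\vec{A_{1}}$ off the time slab $[a,b]$ by an isotopy in the time direction, then move it out of the spatial region $D$ within each time slice.

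\textbf{Step 1 (clearing the slab).} Since $\Sigma_{1}$ is generic, its Reidemeister and Morse loci in $[a,b]\times D^{3}$ are finitely many points. These are exactly the loci involved in the far commutation or movie move. Slightly enlarge $[a,b]$ to $[a-\epsilon,b+\epsilon]$ so that nothing else happens in the enlarged slab. Now fix a time $t_{0}\in(a-\epsilon,a)$ at which $\Sigma_{1}$ is vertical. Isotope each component of $\vec{A_{1}}$, rel.\ $\partial$, by dragging every portion of $A_{1}$ lying in $[a,b]\times D^{3}$ backwards in time. The dragged pieces are routed through $[t_{0},b]\times(\Sigma_{1}\cap\text{slice})$, staying away from the finitely many Reidemeister and Morse loci.

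\textbf{Step 2 (arranging the slab).} After Step 1, $\vec{A_{1}'}$ inside the slab $[t_{0},b+\epsilon]$ should consist of vertical strands $[t_{0},b+\epsilon]\times\{\text{points}\}$ whose endpoints avoid $D$. All turning, swapping, capping, cupping and sliding of the decoration then happens before $t_{0}$ or after $b+\epsilon$. To achieve this:
\begin{enumerate}
\item Choose the vertical strands to lie on parts of $\Sigma_{1}$ of the form $[a,b]\times T'$, where $T'$ is the part of the tangle outside $D$. This is possible because the move is supported in $D$, so outside $D$ the surface is a product.
\item For any strand of $A_{1}$ that enters $D$ (on a component of the tangle that passes through $D$), use an isotopy within the surface to slide the point along the tangle component until it leaves $D$, carried out in the time just before $t_{0}$. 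Symmetrically, slide it back just after $b+\epsilon$. Sliding along the tangle at a fixed time level is an isotopy of the immersed $1$-manifold inside $\Sigma_{1}$.
\item Since $[A_{x},\partial A_{x}]$ only matters up to isotopy rel.\ $\partial$, the new decoration represents the same morphism.
\end{enumerate}

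\textbf{Main obstacle.} The difficulty is a tangle component lying entirely inside $D$, such as a small circle created by a birth inside the move region. A strand of $\vec{A_{1}}$ traversing such a component cannot be pushed off $D$ at a fixed time. For these I would instead push the arc in time entirely out of the slab: the component is born inside $[a,b]$, so the portion of $A_{1}$ on it can be shrunk, via the disk or sphere structure of the Morse pieces, and moved to a time after $b$ along the surface.

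This works if the part of $\Sigma_{1}\cap([a,b]\times D)$ deformation retracts, rel.\ its intersection with the complement, onto something lying outside $D$ or outside $[a,b]$. I expect this to follow by inspecting each of Carter--Saito's movie moves, together with far commutations. In all of them $\Sigma_{1}\cap([a,b]\times D)$ is a union of disks and product strips attached to the boundary of the region. Such a piece lets any arc inside it be isotoped rel.\ endpoints onto its boundary, and then further out of the region. Using immersions rather than embeddings removes any obstruction from self-intersections created along the way.
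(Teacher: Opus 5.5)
Your proposal is essentially the paper's argument. For movie moves, the decisive input is that $\Sigma_1\cap([a,b]\times D)$ is a disjoint union of disks, so $\vec{A_1}$ can be pushed off $[a,b]\times D$; the product structure of $\Sigma_1$ on $[a,b]\times(D^3\setminus D)$ then lets you make the decoration vertical there. The paper does this last step by stretching a thin vertical collar $[a,a+\varepsilon]\times(D^3\setminus D)$ across the slab with an ambient isotopy of $\Sigma_1$. For far commutations it uses genericity near the two loci plus the same stretching trick inside $D_1$ and $D_2$.

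Two things need fixing. First, the order of your steps. Step~1 cannot be carried out as written: arcs joining basepoints at times $0$ and $1$ must cross the slab. Also, merely avoiding the finitely many Reidemeister and Morse loci does not let you drag the decoration backwards through the move region, e.g.\ past a birth or through a saddle. So the disk argument of your last paragraph has to come first, applied to every arc meeting $[a,b]\times D$ and not only to components lying inside $D$. Time-dragging should then be used only in the product region outside $D$.

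Second, the recipe in Step~2(2) (slide out at the bottom, one vertical strand, slide back in at the top) only works when the bottom and top tangle arcs are adjacent on the boundary of the disk. In general, pushing an arc onto the disk's boundary produces a path that crosses the slab several times, so you must allow several vertical strands per arc, joined horizontally just before $a$ or just after $b$.
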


\begin{proof}
We first claim that if $D\subset D^{3}$ is some region, $\Sigma$
is vertical in $[a,b]\times(D^{3}\setminus D)$, and $\vec{A}$ is
disjoint from $[a,b]\times D$, then we can isotope $\vec{A}$ in
$\Sigma$ rel.\ $[a,b]\times D$ such that it becomes vertical in
$[a,b]\times D^{3}$. Indeed, this can be achieved by first applying
a small isotopy to $\vec{A}$ to make it vertical in $[a,a+\varepsilon]\times(D^{3}\setminus D)$,
and then applying an ambient isotopy of $\Sigma$ that expands $[a,a+\varepsilon]\times(D^{3}\setminus D)$
to $[a,b]\times(D^{3}\setminus D)$.

Let us show the lemma. Let us first consider the case where $\Sigma_{1}$
corresponds to the far commutation move. Say the two Reidemeister
or Morse movies happen in $[t_{1}-\varepsilon,t_{1}+\varepsilon]\times D_{1}$
and $[t_{2}-\varepsilon,t_{2}+\varepsilon]\times D_{2}$, respectively,
where $t_{1}<t_{2}$ and $D_{1},D_{2}\subset D^{3}$ are disjoint.
We may assume that $\vec{A_{1}}$ is disjoint from $[t_{i}-2\varepsilon,t_{i}+2\varepsilon]\times D_{i}$
and is vertical in $[t_{i}-2\varepsilon,t_{i}+2\varepsilon]\times(D^{3}\setminus D_{i})$.
Now, apply an ambient isotopy of $\Sigma_{1}$ that expands $[t_{1}+\varepsilon,t_{1}+2\varepsilon]\times D_{1}$
to $[t_{1}+\varepsilon,t_{2}+2\varepsilon]\times D_{1}$ and expands
$[t_{2}-2\varepsilon,t_{2}-\varepsilon]\times D_{2}$ to $[t_{1}-2\varepsilon,t_{2}-\varepsilon]\times D_{2}$.
Then we can apply the above claim for $D=D_{1}\sqcup D_{2}$ and $[a,b]=[t_{1}-2\varepsilon,t_{2}+2\varepsilon]$.

Let us consider the case where $\Sigma_{1}$ corresponds to a movie
move, say supported in $[a,b]\times D$. Then since the ambient surfaces
of each of the movie moves are disjoint unions of disks, we can isotope
$\vec{A}$ so that it is disjoint from $[a,b]\times D$, and we can
then apply the above claim.
\end{proof}
\begin{proof}[Proof of \ref{enu:step3} for $\overline{D}_{\mathsf{BN}}$]
Let $\vec{A_{1}'}$ be as in Lemma~\ref{lem:isotope-as}, and let
$(\Sigma_{2},\vec{A}_{2}')$ be the decorated cobordism obtained by
performing the far commutation or movie move. Then $\overline{D}_{\mathsf{BN}}(\Sigma_{1},\vec{A_{1}})\sim\overline{D}_{\mathsf{BN}}(\Sigma_{1},\vec{A_{1}'})\sim\overline{D}_{\mathsf{BN}}(\Sigma_{2},\vec{A_{2}'})\sim\overline{D}_{\mathsf{BN}}(\Sigma_{2},\vec{A_{2}})$,
where the first and third follow from Subsection~\ref{subsec:Invariance-under-changing}
and the second follows from Lemma~\ref{lem:no-interaction}.
\end{proof}

\subsection{\label{subsec:The-sweep-around-move}The sweep-around move}

\begin{figure}[h]
\begin{centering}
\includegraphics[scale=3.5]{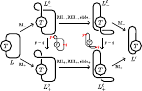}
\par\end{centering}
\caption{\label{fig:sweeparound}A sweep-around move}
\end{figure}

In this subsection we carry out \ref{enu:step4} for $\overline{D}_{\mathsf{BN}}$.
By Subsection~\ref{subsec:Invariance-under-changing}, it reduces
to checking that $\overline{D}_{\mathsf{BN}}$ of Figure~\ref{fig:sweeparound}
commutes for all $\boldsymbol{X}$-pointed tangle $T$ with two endpoints.

Our proof is a straightforward adaptation of Morrison, Walker, and
Wedrich's proof \cite{MWW} to our setting. Let us briefly recall
their proof, following the notations of \cite[Proposition~3.7]{MR4504654},\footnote{We note a mistake in the proof of \cite[Proposition~3.7]{MR4504654}:
the leftmost and rightmost triangles in the commutative diagram of
\cite[page~8818]{MR4504654} do not commute. The vertical maps should
be as in Diagram~(\ref{eq:sweep-around-comdiag}), and an additional
argument like \cite[Lemma~3.9]{MWW} is necessary (this corresponds
to that the top squares of Diagram~(\ref{eq:sweep-around-comdiag})
commute).}\footnote{We also note that \cite[page~8817]{MR4504654} has an unneeded hypothesis:
it is unnecessary to start with a braid closure. In particular, the
claim ${\cal C}_{0}^{-}(L_{+}^{i})\cong{\cal C}_{0}^{-}(L_{-}^{i})$
holds even when one starts with an arbitrary tangle $T$ as in Figure~\ref{fig:sweeparound}.} where a concise proof for $C_{\mathsf{BN}}$ is written. Let $T$
be an arbitrary tangle with two endpoints, and consider Figure~\ref{fig:sweeparound}.
The horizontal arrows are the composition of Reidemeister II and III
movies $\Sigma_{\pm}^{i}:L_{\pm}^{i}\to L_{\pm}^{i+1}$. They show
that the two maps $C_{\mathsf{BN}}(L)\to C_{\mathsf{BN}}(L')$ given
by the composition of the top maps and the bottom maps of Figure~\ref{fig:sweeparound}
are homotopic. To show this, rather than working merely up to chain
homotopy, they choose specific chain level representatives for the
undecorated Reidemeister chain maps to prove that the two maps $C_{\mathsf{BN}}(L)\to C_{\mathsf{BN}}(L')$
agree on the chain level; we use the same choices for the Reidemeister
maps.

For each of $L,L_{\pm}^{i},L'$, define the \emph{external grading}
of $C_{\mathsf{BN}}$ as the contribution to the homological grading
given by the crossings that involve the horizontal arc that moves.
Denote as $C_{\mathsf{BN},0}$ the summand with external grading $0$.
Then, $C_{\mathsf{BN}}(L)$ and $C_{\mathsf{BN}}(L')$ are supported
in external grading $0$, and for each $i$, there exists a natural
isomorphism $C_{\mathsf{BN},0}(L_{-}^{i})\cong C_{\mathsf{BN},0}(L_{+}^{i})$.
Now, \cite{MWW} reduces checking the sweep-around move to local checks:
they check that every undecorated Reidemeister move preserves or decreases
the external grading, and that every quadrilateral of Diagram~(\ref{eq:sweep-around-comdiag})
strictly commutes, where the horizontal maps are the $C_{\mathsf{BN},0}(L_{\pm}^{i})\to C_{\mathsf{BN},0}(L_{\pm}^{i+1})$
components of the Reidemeister maps. 
\begin{equation}\label{eq:sweep-around-comdiag}
\begin{tikzcd}[ampersand replacement=\&,column sep=tiny,row sep=small]
	\& {C_{\mathsf{BN},0}(L^0_-)} \& {\cdots } \& {C_{\mathsf{BN},0}(L^i_-)} \& \cdots \& {C_{\mathsf{BN},0}(L^\ell_-)} \& \\
	{C_{\mathsf{BN},0}(L)} \& {C_{\mathsf{BN},0}(L^0_-)} \& \cdots \& {C_{\mathsf{BN},0}(L^i_-)} \& \cdots \& {C_{\mathsf{BN},0}(L^\ell_-)} \& {C_{\mathsf{BN},0}(L')} \\
	\& {C_{\mathsf{BN},0}(L^0_+)} \& \cdots \& {C_{\mathsf{BN},0}(L^i_+)} \& \cdots \& {C_{\mathsf{BN},0}(L^\ell_+)}
	\arrow[from=1-2, to=1-3]
	\arrow["{p-q}", from=1-2, to=2-2]
	\arrow[from=1-3, to=1-4]
	\arrow[from=1-4, to=1-5]
	\arrow["{p-q}", from=1-4, to=2-4]
	\arrow[from=1-5, to=1-6]
	\arrow["{p-q}", from=1-6, to=2-6]
	\arrow["{\mathrm{RI}_-}", from=1-6, to=2-7]
	\arrow["{\mathrm{RI}_-}", from=2-1, to=1-2]
	\arrow["{\mathrm{RI}_+}"', from=2-1, to=3-2]
	\arrow[from=2-2, to=2-3]
	\arrow["\cong", from=2-2, to=3-2]
	\arrow[from=2-3, to=2-4]
	\arrow[from=2-4, to=2-5]
	\arrow["\cong", from=2-4, to=3-4]
	\arrow[from=2-5, to=2-6]
	\arrow["\cong", from=2-6, to=3-6]
	\arrow[from=3-2, to=3-3]
	\arrow[from=3-3, to=3-4]
	\arrow[from=3-4, to=3-5]
	\arrow[from=3-5, to=3-6]
	\arrow["{\mathrm{RI}_+}"', from=3-6, to=2-7]
\end{tikzcd}
\end{equation}

Now, let us consider the case where $T$ is an $\boldsymbol{X}$-pointed
tangle. The horizontal arrows of Figure~\ref{fig:sweeparound} are
compositions of Reidemeister II, Reidemeister III, and slide movies
$(\Sigma_{\pm}^{i},\vec{A_{\pm}^{i}}):(L_{\pm}^{i},\vec{\boldsymbol{p}_{\pm}^{i}})\to(L_{\pm}^{i+1},\vec{\boldsymbol{p}_{\pm}^{i+1}})$.
To run the same argument, we also need to choose specific chain level
representatives for the decorated link cobordism map $\overline{D}_{\mathsf{BN}}$
for Reidemeister and slide movies. If $(\Sigma,\vec{A})$ is a Reidemeister
movie, let the chain level representative be $C_{\mathsf{BN}}(\Sigma)\otimes\mathrm{Id}_{R_{\boldsymbol{X}}\otimes\Xi_{\boldsymbol{X}}}$
where $C_{\mathsf{BN}}(\Sigma)$ is the chain level representative
of \cite{MWW}. If $(\Sigma_{\pm}^{i},\vec{A_{\pm}^{i}})$ is a slide
movie, recall that the underlying $\mathbb{F}$-modules of $\overline{D}_{\mathsf{BN}}(L_{\pm}^{i})$
and $\overline{D}_{\mathsf{BN}}(L_{\pm}^{i+1})$ are equal. Choose
the following chain level representative (which is the same as Definition~\ref{def:dbarbn}):
\begin{equation}
\overline{D}_{\mathsf{BN}}(\Sigma_{\pm}^{i},\vec{A_{\pm}^{i}})=\mathrm{Id}_{C_{\mathsf{BN}}(L_{\pm}^{i})}\otimes\mathrm{Id}_{R_{\boldsymbol{X}}\otimes\Xi_{\boldsymbol{X}}}+H\otimes\mathrm{Id}_{R_{\boldsymbol{X}}}\otimes\xi_{y}\label{eq:slide-map}
\end{equation}
where $H$ is the basepoint sliding homotopy for the corresponding
crossing.

Define the external grading of $\overline{D}_{\mathsf{BN}}(L_{\pm}^{i},\vec{\boldsymbol{p}_{\pm}^{i}})=C_{\mathsf{BN}}(L_{\pm}^{i})\otimes R_{\boldsymbol{X}}\otimes\Xi_{\boldsymbol{X}}$
by declaring $R_{\boldsymbol{X}}\otimes\Xi_{\boldsymbol{X}}$ to be
supported in external grading $0$. Reidemeister maps on $\overline{D}_{\mathsf{BN}}$
preserve or decrease the external grading by the corresponding statement
for $C_{\mathsf{BN}}$. Slide maps also preserve or decrease the external
grading: the first summand of Equation~(\ref{eq:slide-map}) preserves
the external grading, and the second summand decreases the external
grading by $1$.

Consider Diagram~(\ref{eq:sweep-around-comdiag}), but where all
the $C_{\mathsf{BN},0}$'s are replaced by $\overline{D}_{\mathsf{BN},0}$.
We claim that every quadrilateral strictly commutes. That the leftmost
and rightmost quadrilateral, and all the squares that involve Reidemeister
II or III moves strictly commute follows from the corresponding statements
for $C_{\mathsf{BN}}$. The squares that involve slide moves commute
since the horizontal maps are $\mathrm{Id}_{\overline{D}_{\mathsf{BN},0}}$:
indeed, the component of Equation~(\ref{eq:slide-map}) that preserves
the external grading is $\mathrm{Id}_{\overline{D}_{\mathsf{BN}}}$.
This completes the proof of the sweep-around move, and hence \ref{enu:step4},
for $\overline{D}_{\mathsf{BN}}$.
\begin{proof}[Proof of Theorem~\ref{thm:enhanced-khovanov}]
Let us work in the setting of Section~\ref{sec:Pointed-links-and},
which is more general than Subsection~\ref{subsec:An-enhancement-of}.
(\ref{enu:141}) is Definition~\ref{def:dkh-objects}, and this section
proves (\ref{enu:142}). Let us show (\ref{enu:144}). Let $(\Sigma,\vec{A}):(L,\vec{\boldsymbol{p}})\to(L',\vec{\boldsymbol{p}'})$
be an $\boldsymbol{X}$-decorated link cobordism, let $f_{DKh(L,\vec{\boldsymbol{p}})}:DKh(L,\vec{\boldsymbol{p}})\to\overline{D}Kh(L,\vec{\boldsymbol{p}})$
be the $\mathbb{F}$-linear map from Remark~\ref{rem:f-linear-inverse},
and let $q_{DKh(L',\vec{\boldsymbol{p}'})}:\overline{D}Kh(L',\vec{\boldsymbol{p}'})\to DKh(L',\vec{\boldsymbol{p}'})$
be our preferred $R_{\boldsymbol{X}}$-quasi-isomorphism from Definition~\ref{def:preferred}.
Then, the image of $DKh(\Sigma,\vec{A})$ under the forgetful functor
$D^{b}(\mathsf{Mod}_{R_{\boldsymbol{X}}})\to K^{b}(\mathsf{Mod}_{\mathbb{F}})$
is $q_{DKh(L',\vec{\boldsymbol{p}'})}\circ\overline{D}Kh(\Sigma,\vec{A})\circ f_{DKh(L,\vec{\boldsymbol{p}})}$,
and it is straightforward to check that this agrees with $CKh(\Sigma)$
for all the elementary movies $(\Sigma,\vec{A})$.

In Appendix~\ref{sec:Comparison-with-previous} we relate our construction
with the constructions of Baldwin-Levine-Sarkar and Lipshitz-Sarkar
and prove (\ref{enu:For-the-decorated}) for these cases. Note that
Hedden-Ni considers the $R_{\boldsymbol{X}}$-action on \emph{homology},
and so the statement for Hedden-Ni is equivalent to saying that $DKh$
and $CKh$ agree on homology, which is (\ref{enu:144}).
\end{proof}

\section{\label{sec:Standard-'s-with}The standard $\mathbb{RP}^{2}$ with
Euler number $-2$}

In this section, we study the effect of taking the connected sum with
the standard $\mathbb{RP}^{2}$ with Euler number $-2$. Then, we
define a specialized version of the decorated Khovanov TQFT that behaves
particularly nicely under this operation.
\begin{lem}
\label{lem:model-rp2-computation}Let $\boldsymbol{X}:=\{x\}$ and
let $((I\times T)\#\mathbb{RP}^{2},\mathbb{RP}^{1}):(T,\emptyset)\to(T,\emptyset)$
be the $\boldsymbol{X}$-decorated cobordism of Figure~\ref{fig:rp2-tangle},
where the decoration has color $x$. Then,
\[
\overline{D}_{\mathsf{BN}}((I\times T)\#\mathbb{RP}^{2},\mathbb{RP}^{1}):C_{\mathsf{BN}}(T)\otimes_{\mathbb{F}}R_{x}\otimes_{\mathbb{F}}\Xi_{x}\to C_{\mathsf{BN}}(T)\otimes_{\mathbb{F}}R_{x}\otimes_{\mathbb{F}}\Xi_{x}
\]
is multiplication by $\xi_{x}$.
\end{lem}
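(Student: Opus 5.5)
The idea is to localize the computation to a single arc, and then show that only one term of the composite can survive.

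\emph{Localization.} The cobordism $(I\times T)\#\mathbb{RP}^{2}$ of Figure~\ref{fig:rp2-tangle} agrees with $I\times T$ except in a small ball around a point of one strand of $T$. So I would write $T$ as the composition, via a planar arc diagram, of a trivial $2$-ended tangle $\alpha$ (a single crossingless arc) with the rest of $T$. The decorated cobordism is then the composition of $I\times(\text{rest of }T)$ (undecorated, with no colors) and a local decorated cobordism $(I\times\alpha)\#\mathbb{RP}^{2}$ with closed decoration $\mathbb{RP}^{1}$ of color $x$. By Proposition~\ref{prop:composing-tangles-cobordisms}, which is a strict chain-level statement, it suffices to prove the lemma for $T=\alpha$. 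Here $\overline{D}_{\mathsf{BN}}(\alpha,\emptyset)=\alpha\otimes R_{x}\otimes\Xi_{x}$, with differential $\mathrm{Id}_{\alpha}\otimes x\otimes\xi_{x}$.

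\emph{Decomposition into movies.} I would realize the local cobordism as a composition of elementary movies:
\begin{itemize}
\item a Reidemeister I movie creating a kink with crossing $c$;
\item a cup movie creating two basepoints $p,q$ of color $x$ on the kink, on either side of $c$;
\item one slide movie moving $p$ across $c$;
\item the saddle (Morse) movie and the inverse Reidemeister moves producing the crosscap;
\item a cap movie annihilating the two basepoints.
\end{itemize}
All of these except the slide act as $C_{\mathsf{BN}}(\cdot)\otimes\mathrm{Id}_{R_{x}\otimes\Xi_{x}}$, and the cup and cap maps are the identity. The slide acts as $\mathrm{Id}+H_{c}\otimes\mathrm{Id}\otimes\xi_{x}$. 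Expanding the composite therefore gives
\[
C_{\mathsf{BN}}(\Sigma)\otimes\mathrm{Id}\;+\;C_{\mathsf{BN}}(\Sigma;H_{c})\otimes\mathrm{Id}_{R_{x}}\otimes\xi_{x}.
\]
Here $\Sigma$ is the undecorated cobordism, and $C_{\mathsf{BN}}(\Sigma;H_{c})$ is the same composite with $H_{c}$ inserted at the slide time. There are no $\xi_{x}^{k}$ terms with $k\ge2$, because there is only one slide.

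\emph{Grading argument.} Both terms are morphisms between copies of the one-object complex $C_{\mathsf{BN}}(\alpha)=\alpha$ in homological degree $0$.
\begin{itemize}
\item The first term has $q$-degree $\chi=-1$. But $\mathrm{Hom}_{\mathsf{BN}}(\alpha,\alpha)$ is spanned over $\mathbb{F}$ by the identity sheet and its dotted versions, which live in $q$-degrees $0,-2,\dots$. So this term vanishes. (This is also the familiar vanishing of $CKh$ for $\mathbb{RP}^{2}$ summands.)
\item The second term has bidegree $(1,2)-(1,2)=(0,0)$ on the $C_{\mathsf{BN}}$ factor, since the slide map has total bidegree $(0,0)$ and $\xi_{x}$ has bidegree $(1,2)$. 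Hence it equals $\lambda\cdot\mathrm{Id}_{\alpha}$ for some $\lambda\in\mathbb{F}$.
\end{itemize}
So the map is $\lambda\,\xi_{x}$, and it remains to show $\lambda=1$. I would use the simplicity lemma for pairings only to confirm that any bidegree-$(0,0)$ endomorphism has this form.

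\emph{Main obstacle: $\lambda=1$.} This is the step I expect to be hardest. I would evaluate the chain-level composite using the explicit Reidemeister I chain maps (the \cite{MWW} representatives). The inclusion into the kinked complex is followed by $H_{c}$, which is the saddle from the $1$-resolution to the $0$-resolution at $c$. It is then followed by the crosscap saddle and the Reidemeister I projection. The composite should be a planar cobordism $\alpha\to\alpha$ consisting of the identity sheet with extra tubes and dots. Using the neck-cutting and sphere relations of $\mathcal{C}ob_{\bullet/l}^{3}$ over $\mathbb{F}$, I expect it to simplify to exactly the identity sheet. This is where the sign of the Euler number enters: for Euler number $+2$, the opposite Reidemeister I move puts $H_{c}$ in the wrong homological position and the term dies. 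As a cross-check, I would close $\alpha$ to the unknot and compare with the computation in Subsection~\ref{subsec:An-example}, where the answer is known to be nonzero. Since any nonzero $\lambda\in\mathbb{F}$ equals $1$, that comparison alone would suffice.
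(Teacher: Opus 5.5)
There is a genuine gap: the step that carries the content of the lemma, $\lambda=1$, is never carried out.

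Your reduction is sound in outline, and it matches what Figure~\ref{fig:rp2-tangle} records. You localize to a crossingless arc $\alpha$ via Proposition~\ref{prop:composing-tangles-cobordisms}. You expand the composite into the term with $\mathrm{Id}$ at the slide and the single term with $H_c\otimes\mathrm{Id}_{R_x}\otimes\xi_x$. You observe that only the latter survives and that its $C_{\mathsf{BN}}$-factor is $\lambda\,\mathrm{Id}_\alpha$. But you leave $\lambda=1$ as an expectation.

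The paper's proof is exactly this chain-level computation. One follows the $\xi_x^{-1}$-layer through:
\begin{itemize}
\item the Reidemeister~I map into the kinked complex,
\item the $H_c\otimes\xi_x$ component of the slide map,
\item the band saddle,
\item the inverse Reidemeister~I map,
\end{itemize}
and checks that the resulting planar cobordism $\alpha\to\alpha$ is the identity sheet (the dashed arrows in Figure~\ref{fig:rp2-tangle}).

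Your proposed shortcut cannot replace this. The example in Subsection~\ref{subsec:An-example} is only asserted in the introduction (``it turns out''), and it is a consequence of this very computation, so invoking it is circular. It is also a different decorated cobordism: two arcs of two colors ending at basepoints on $U$, so the objects carry a nontrivial $R_2$-action. Nonvanishing there does not transfer to the closed $\mathbb{RP}^1$ decoration on $(T,\emptyset)$ without a further argument.

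Your degree bookkeeping also needs repair. The band in this movie is not compatible with the orientations, so $C_{\mathsf{BN}}(\Sigma)$ does not have $q$-degree $\chi=-1$. The normalization shifts $[-n_-]\{n_+-2n_-\}$ of the intermediate diagrams change across the band. Every elementary map other than the slide is $C_{\mathsf{BN}}(\cdot)\otimes\mathrm{Id}$, and the slide map has bidegree $(0,0)$. So the lemma itself forces $C_{\mathsf{BN}}(\Sigma)$ to have bidegree $(1,2)$.

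If $C_{\mathsf{BN}}(\Sigma)$ really had $q$-degree $-1$, your $H_c$-term would have $q$-degree $-3$ and vanish, and the whole map would be $0$. Your step ``$(1,2)-(1,2)=(0,0)$'' silently assumes the answer.

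The correct order is:
\begin{itemize}
\item Compute the bidegree of $C_{\mathsf{BN}}(\Sigma)$ by tracking $n_\pm$ through the movie. This is where the choice of crossing (Euler number $-2$ rather than $+2$) enters.
\item The first term then vanishes simply because it has homological degree $1$ on a complex concentrated in degree $0$.
\item The $H_c$-term has bidegree $(0,0)$ on $\alpha$, hence equals $\lambda\,\mathrm{Id}_\alpha$.
\item The explicit composite then gives $\lambda=1$.
\end{itemize}
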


\begin{proof}
Direct computation. See Figure~\ref{fig:rp2-tangle}: the dashed
arrows compose to the identity.
\end{proof}
\begin{figure}[h]
\begin{centering}
\includegraphics[scale=2.2]{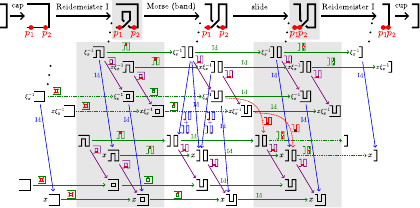}
\par\end{centering}
\caption{\label{fig:rp2-tangle}The decorated cobordism map for taking the
connected sum with a standard $\mathbb{RP}^{2}$ with Euler number
$-2$. Only the subcomplexes $C_{\mathsf{BN}}\otimes_{\mathbb{F}}R_{x}\otimes_{\mathbb{F}}(\xi_{x}^{-1}\mathbb{F}\oplus\mathbb{F})$
are drawn, and the grading shifts are omitted.}
\end{figure}

\begin{figure}[h]
\begin{centering}
\includegraphics[scale=2.2]{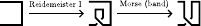}
\par\end{centering}
\caption{\label{fig:rp2-tangle-2}A movie that corresponds to the standard
$\mathbb{RP}^{2}$ with Euler number $2$.}
\end{figure}

\begin{rem}[Standard $\mathbb{RP}^{2}$ with Euler number $2$]
Note that $C_{\mathsf{BN}}$ for the decorated cobordism of Figure~\ref{fig:rp2-tangle-2}
is identically $0$ strictly on the chain level. This corresponds
to the standard $\mathbb{RP}^{2}$ with Euler number $2$; using this,
one can show that $\overline{D}_{\mathsf{BN}}=0$ for any decorated
cobordism $(\Sigma,\vec{A})$ for which $\Sigma$ has a standard Euler
number $2$ $\mathbb{RP}^{2}$-summand.
\end{rem}

\begin{defn}
\label{def:hkh}Consider the full subcategory $\mathsf{HLink}$ of
$\mathsf{Link}_{\{x\}}$ whose objects are links without any basepoints;
then the morphisms are isotopy rel.\ $\partial$ classes of link
cobordisms $\Sigma$ decorated by a first homology class $w\in H_{1}(\Sigma;\mathbb{F})$.
Define 
\[
HKh:=H(DKh\otimes_{\mathbb{F}[x]/(x^{2})}^{L}\mathbb{F}[x]/(x)):\mathsf{HLink}\to\mathsf{Mod}_{\mathbb{F}}
\]
where $H$ means homology and $\mathsf{Mod}_{\mathbb{F}}$ is the
category of bigraded $\mathbb{F}$-modules.
\end{defn}

\begin{lem}
We have $HKh(L)=Kh(L)\otimes_{\mathbb{F}}\Xi_{x}$.
\end{lem}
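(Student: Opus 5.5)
To compute $HKh(L)$, I will compute the derived tensor product $DKh(L)\otimes^{L}_{\mathbb{F}[x]/(x^{2})}\mathbb{F}$ by replacing $DKh(L)$ with its preferred free resolution $\overline{D}Kh(L)$ from Definition~\ref{def:preferred}. By Lemma~\ref{lem:quasi-iso}, $q_{DKh(L)}:\overline{D}Kh(L)\to DKh(L)$ is an $R_{x}$-quasi-isomorphism from a bounded-above complex of free $R_{x}$-modules. Therefore the derived tensor product is represented by the honest tensor product $\overline{D}Kh(L)\otimes_{R_{x}}\mathbb{F}$.

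Since $L$ carries no basepoints, $\boldsymbol{p}_{x}=\emptyset$ and $x$ acts on $CKh(L)$ as the empty sum, i.e.\ as $0$. So the differential on $\overline{D}Kh(L)=CKh(L)\otimes_{\mathbb{F}}R_{x}\otimes_{\mathbb{F}}\Xi_{x}$ reduces to
\[
\partial(c\otimes r\otimes\xi)=(\partial c)\otimes r\otimes\xi+c\otimes(xr)\otimes(\xi_{x}\xi).
\]
Equivalently, by Remark~\ref{rem:change-basis-1}, $\overline{D}Kh(L)$ is $CKh(L)\otimes_{\mathbb{F}}F_{x}$, with the $R_{x}$-action coming only from the $F_{x}$ factor, because $x$ is zero on $CKh(L)$.

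Next I will tensor over $R_{x}$ with $\mathbb{F}=R_{x}/(x)$. This replaces $R_{x}$ by $\mathbb{F}$ and sends $x$ to $0$, giving the complex $CKh(L)\otimes_{\mathbb{F}}\Xi_{x}$ with differential $\partial_{CKh(L)}\otimes\mathrm{Id}_{\Xi_{x}}$. This is a direct sum of bigrading-shifted copies $\xi_{x}^{-n}CKh(L)$, $n\ge0$, with no maps between them. Since homology commutes with this direct sum, its homology is $\bigoplus_{n\ge0}\xi_{x}^{-n}Kh(L)=Kh(L)\otimes_{\mathbb{F}}\Xi_{x}$, and the bigradings match because $\xi_{x}$ was assigned bidegree $(1,2)$ throughout.

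The only point needing care is justifying that the preferred resolution computes the derived tensor product even though it is merely bounded above rather than bounded. This is standard: bounded-above complexes of free (hence projective) modules are K-projective, so $-\otimes_{R_{x}}\mathbb{F}$ applied to them computes $\otimes^{L}$. The rest is bookkeeping of gradings.
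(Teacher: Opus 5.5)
Your proposal is correct and follows the paper's proof. Both arguments replace $DKh(L)$ by the preferred resolution $\overline{D}Kh(L)$, observe that $x$ acts by zero on $CKh(L)$ so the differential is $\partial_{CKh(L)}\otimes\mathrm{Id}+\mathrm{Id}\otimes x\otimes\xi_{x}$, and tensor with $\mathbb{F}[x]/(x)$ to get $CKh(L)\otimes_{\mathbb{F}}\Xi_{x}$ with differential $\partial_{CKh(L)}\otimes\mathrm{Id}_{\Xi_{x}}$; your remark that bounded-above complexes of free modules are K-projective simply makes explicit a step the paper leaves implicit.
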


\begin{proof}
Since $L$ does not have any basepoints, $x\in R_{x}$ acts trivially
on $CKh(L)$. Hence, the differential on $\overline{D}Kh(L,\emptyset)=CKh(L)\otimes_{\mathbb{F}}R_{x}\otimes_{\mathbb{F}}\Xi_{x}$
is $\partial_{CKh(L)}\otimes\mathrm{Id}_{R_{x}\otimes_{\mathbb{F}}\Xi_{x}}+\mathrm{Id}_{CKh(L)}\otimes x\otimes\xi_{x}$,
and so $\overline{D}Kh(L,\emptyset)\otimes_{\mathbb{F}[x]/(x^{2})}\mathbb{F}[x]/(x)=CKh(L)\otimes_{\mathbb{F}}\Xi_{x}$
with differential $\partial_{CKh(L)}\otimes\mathrm{Id}_{\Xi_{x}}$.
\end{proof}
\begin{cor}
\label{cor:Let--be}Let $K$ be a knot in $S^{3}$ and let $\Sigma,\Sigma'$
be two properly embedded orientable surfaces in $D^{4}$ with boundary
$K$, such that $Kh(\Sigma)\neq Kh(\Sigma')$. Then for any $N\ge0$,
we have
\[
HKh(\Sigma\#N\mathbb{RP}^{2},w_{1})=Kh(\Sigma)\otimes\xi_{x}^{N}\neq Kh(\Sigma')\otimes\xi_{x}^{N}=HKh(\Sigma'\#N\mathbb{RP}^{2},w_{1}),
\]
where $\mathbb{RP}^{2}$ denotes the standard $\mathbb{RP}^{2}$ with
Euler number $-2$ and $w_{1}$ is the Poincar\'{e} dual of the first
Stiefel-Whitney class (note that $H_{1}(\Sigma\#N\mathbb{RP}^{2},K;\mathbb{F})\cong H_{1}(\Sigma\#N\mathbb{RP}^{2};\mathbb{F})$).
In particular, $\Sigma\#N\mathbb{RP}^{2}$ and $\Sigma'\#N\mathbb{RP}^{2}$
are not isotopic rel.\ $\partial$.
\end{cor}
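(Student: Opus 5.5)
We will compute $HKh(\Sigma\#N\mathbb{RP}^{2},w_{1})$ by decomposing the cobordism so that the $N$ crosscaps appear as $N$ copies of the local model of Lemma~\ref{lem:model-rp2-computation}. Since $DKh$ is functorial (Theorem~\ref{thm:enhanced-khovanov}) and depends only on the isotopy class of the surface and the mod $2$ class of the decoration, we may choose a convenient representative. We isotope $\Sigma\#N\mathbb{RP}^{2}$ rel.\ $\partial$ so that it is a composite: first $\Sigma$, viewed as a cobordism $\emptyset\to K$ with empty decoration, followed by $N$ successive copies of $(I\times K)\#\mathbb{RP}^{2}$ performed locally near a small arc of $K$. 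We decorate the $j$-th copy by its core $\mathbb{RP}^{1}$. Because $\Sigma$ is orientable, the Poincar\'e dual of $w_{1}(\Sigma\#N\mathbb{RP}^{2})$ is the sum of the $N$ core classes. Hence the decoration of the composite represents $w_{1}$ in $H_{1}(\Sigma\#N\mathbb{RP}^{2};\mathbb{F})$, and by (\ref{enu:142}) the resulting map equals $DKh(\Sigma\#N\mathbb{RP}^{2},w_{1})$.

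Next we compute each piece at the level of $\overline{D}_{\mathsf{BN}}$. For the first piece, the decoration is empty and there are no basepoints, so $\overline{D}_{\mathsf{BN}}(\Sigma,\emptyset)=C_{\mathsf{BN}}(\Sigma)\otimes\mathrm{Id}_{R_{x}\otimes\Xi_{x}}$. For the crosscap pieces, we write $K$ as the composition, via a planar arc diagram, of a small two-ended tangle $T$ near where the crosscap is added with the rest of the knot. By Proposition~\ref{prop:composing-tangles-cobordisms} and Lemma~\ref{lem:model-rp2-computation}, each crosscap piece acts on $\overline{D}_{\mathsf{BN}}(K,\emptyset)=C_{\mathsf{BN}}(K)\otimes R_{x}\otimes\Xi_{x}$ as multiplication by $\xi_{x}$, at least up to $R_{x}$-homotopy; the identity tensor factor from the complementary tangle poses no issue, since all basepoints lie in the local picture and there are none to begin with. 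Composing gives
\[
C_{\mathsf{BN}}(\Sigma)\otimes\mathrm{Id}_{R_{x}}\otimes\xi_{x}^{N}.
\]
Applying the TQFT $\mathcal{F}$ and then $-\otimes_{R_{x}}\mathbb{F}$, which on preferred resolutions is literally $\otimes_{R_{x}}$ and so computes the derived tensor, yields $CKh(\Sigma)\otimes\xi_{x}^{N}$ on $CKh(K)\otimes\Xi_{x}$ with differential $\partial\otimes\mathrm{Id}$, as in the preceding lemma. Taking homology then gives $Kh(\Sigma)\otimes\xi_{x}^{N}$. Throughout, the source is $\emptyset$ and $HKh(\emptyset)=\Xi_{x}$, so the map is determined by the image of $\xi_x^{-m}$ for all $m$.

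For the inequality, note that $\xi_{x}^{N}\colon\Xi_{x}\to\Xi_{x}$ sends $\xi_{x}^{-N}$ to $1$. Therefore $Kh(\Sigma)\otimes\xi_{x}^{N}$ applied to $1\otimes\xi_{x}^{-N}$ gives $Kh(\Sigma)(1)\otimes1$, and similarly for $\Sigma'$. Since $Kh(\Sigma)\neq Kh(\Sigma')$, these elements differ, so the two maps differ. Non-isotopy rel.\ $\partial$ then follows from the invariance in (\ref{enu:142}), provided we know that any isotopy carries $w_{1}$ to $w_{1}$. This holds because $w_{1}$ is intrinsic to the surface.

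The main obstacle is the first step: justifying that the crosscap pieces can be placed locally, independently of $\Sigma$ and of each other, with decoration class exactly $w_{1}$. This requires that the connected sum with standard $\mathbb{RP}^{2}$'s be performed in a ball meeting the surface in a disc, which can then be isotoped to sit near $\{1\}\times K$. We must also check that the identification of $H_{1}(\Sigma\#N\mathbb{RP}^{2},K)$ with $H_{1}(\Sigma\#N\mathbb{RP}^2)$ is compatible with the way the decoration enters. The remaining steps are bookkeeping using the stated lemmas.
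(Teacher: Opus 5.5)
Your proposal is correct and follows essentially the same route as the paper. The paper decomposes the cobordism as $\Sigma$ followed by $N$ local crosscaps. It then applies Proposition~\ref{prop:composing-tangles-cobordisms} and Lemma~\ref{lem:model-rp2-computation} to obtain $CKh(\Sigma)\otimes\mathrm{Id}_{R_x}\otimes\xi_x^N$, tensors down over $R_x$, and takes homology. Your extra details fill in steps the paper leaves implicit, and they are correct: evaluating at $1\otimes\xi_x^{-N}$ to get the inequality, identifying the sum of the cores with the Poincar\'e dual of $w_1$, and using naturality of $w_1$ under isotopy to conclude non-isotopy.
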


\begin{proof}
By Proposition~\ref{prop:composing-tangles-cobordisms} and Lemma~\ref{lem:model-rp2-computation},
we have
\[
\overline{D}Kh(\Sigma\#N\mathbb{RP}^{2},w_{1})=CKh(\Sigma)\otimes\mathrm{Id}_{R_{x}}\otimes\xi_{x}^{N},\ \overline{D}Kh(\Sigma'\#N\mathbb{RP}^{2},w_{1})=CKh(\Sigma')\otimes\mathrm{Id}_{R_{x}}\otimes\xi_{x}^{N}.
\]
Hence, $HKh(\Sigma\#N\mathbb{RP}^{2},w_{1})=Kh(\Sigma)\otimes\xi_{x}^{N}$
and $HKh(\Sigma'\#N\mathbb{RP}^{2},w_{1})=Kh(\Sigma')\otimes\xi_{x}^{N}$.
\end{proof}
\begin{proof}[Proof of Corollary~\ref{cor:examples}]
Note that our convention for Khovanov homology differs from that
of Hayden and Sundberg \cite{MR4726569}; compare \cite[Conventions]{nahm2025khovanov}.
If $\Sigma,\Sigma'$ are the surfaces of \cite[Theorem 1.1]{MR4726569}
in $D^{4}$ with boundary $K\subset S^{3}$, then Hayden and Sundberg
study the \emph{mirrors} $m(\Sigma),m(\Sigma'):K\to\emptyset$ and
the induced link cobordism maps 
\[
Kh(m(\Sigma);\mathbb{Z}),Kh(m(\Sigma');\mathbb{Z}):Kh(K;\mathbb{Z})\to\mathbb{Z}.
\]
They find a homology class $\phi\in Kh(K;\mathbb{Z})$ such that $Kh(m(\Sigma);\mathbb{Z})(\phi)=\pm1$
but $Kh(m(\Sigma');\mathbb{Z})(\phi)=0$ \cite[Section~3]{MR4726569}.
Hence, if $\phi_{\mathbb{F}}\in Kh(K;\mathbb{F})$ is the mod $2$
reduction of $\phi$, then $Kh(m(\Sigma);\mathbb{F})(\phi_{\mathbb{F}})=1$
but $Kh(m(\Sigma');\mathbb{F})(\phi_{\mathbb{F}})=0$. In particular,
$Kh(m(\Sigma);\mathbb{F})\neq Kh(m(\Sigma');\mathbb{F})$, and so
$m(\Sigma)\#N\mathbb{RP}^{2}$ and $m(\Sigma')\#N\mathbb{RP}^{2}$
are not isotopic rel.\ $\partial$ by Corollary~\ref{cor:Let--be}.
\end{proof}

\section{\label{sec:remarks}Two remarks on the enhanced Khovanov TQFT}

\begin{figure}[h]
\begin{centering}
\includegraphics[scale=2.2]{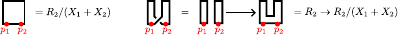}
\par\end{centering}
\caption{\label{fig:two-unknots}The Khovanov chain complexes of these two
link diagrams of the unknot with two basepoints are \emph{not} $R_{2}$-chain
homotopy equivalent.}
\end{figure}

In this paragraph we explain that in order to define the decorated
Khovanov TQFT, it is necessary to work in the derived category $D^{b}(\mathsf{Mod}_{R_{\boldsymbol{X}}})$
instead of the homotopy category $K^{b}(\mathsf{Mod}_{R_{\boldsymbol{X}}})$.
Consider the two generic $2$-pointed links of Figure~\ref{fig:two-unknots}
(the underlying links are the unknot). We would like the Khovanov
TQFT to assign isomorphic objects to them, but the Khovanov chain
complexes of these pointed links are \emph{not} $R_{2}$-homotopy
equivalent (but are $R_{2}$-quasi-isomorphic). Indeed, they are respectively
$R_{2}/(X_{1}+X_{2})$ and the mapping cone of the $R_{2}$-linear
map $R_{2}\rightarrow R_{2}/(X_{1}+X_{2})$ that sends $1$ to $1$;
there is no $R_{2}$-linear chain map from the latter to the former
that is a quasi-isomorphism. The author learned this from Robert Lipshitz;
note that this answers \cite[Remark~2.5]{MR3604486} in the negative.

\begin{figure}[h]
\begin{centering}
\includegraphics[scale=2.2]{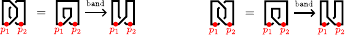}
\par\end{centering}
\caption{\label{fig:u2-h}The Khovanov chain complexes of the unlink with two
components $U_{2}$ and the Hopf link $H$ are mapping cones of a
nonorientable band between two unknots.}
\end{figure}

Next, we explain an observation that led us to Theorems~\ref{thm:main-thm}~and~\ref{thm:enhanced-khovanov}.
Reduced Khovanov homology and knot Floer homology \cite{MR2065507,MR2704683}
are related by spectral sequences \cite{MR4777638,nahm2025spectral}.
However, as discussed in \cite{MR3604486}, it was particularly tricky
to construct such a spectral sequence because, roughly speaking, Khovanov
homology is \emph{unpointed} and link Floer homology is \emph{pointed}.
This difference is more pronounced for links with two or more components,
and one ends up having to reconcile this difference in order to construct
such a spectral sequence (we do not discuss this further in this paper).
Motivated by the need to reconcile this difference, Baldwin, Levine,
and Sarkar defined pointed Khovanov homology, which behaves more similarly
to link Floer homology.

A key example where this distinction between Khovanov and link Floer
homology is visible is the comparison between the two component unlink
$U_{2}$ and the Hopf link $H$. Indeed, the reduced Khovanov homology
of these two links have the same rank, but the link Floer homology
of these two links have different ranks. Pointed Khovanov homology
remedies this difference; if we put one basepoint on each component,
then the ranks of the pointed Khovanov homology of $U_{2}$ and $H$
differ in the same way that the ranks of link Floer homology differ.

The Khovanov chain complexes of $U_{2}$ and $H$ are both the mapping
cone of a standard nonorientable band between two unknots (see Figure~\ref{fig:u2-h});
the only difference is the Euler number of the band. In the unpointed
case, $\mathrm{rk}Kh(U_{2})=\mathrm{rk}Kh(H)=2\mathrm{rk}Kh(U)$ where
$U$ is the unknot (this is another way to see that these nonorientable
bands induce the zero map on unpointed Khovanov homology). On the
other hand, in the pointed Khovanov homology of Baldwin, Levine, and
Sarkar, we have $\mathrm{rk}Kh(U_{2},(p_{1},p_{2}))\neq\mathrm{rk}Kh(H,(p_{1},p_{2}))$.
Hence, if functoriality of pointed Khovanov homology were established,
then the two corresponding \emph{band maps} $Kh(U,(p_{1},p_{2}))\to Kh(U,(p_{1},p_{2}))$
ought to have different ranks. In other words, a functorial theory
for pointed links should distinguish the two standard nonorientable
bands with Euler number $\pm2$ on the unknot. (Functoriality of pointed
Khovanov homology follows from functoriality of $DKh$; see Appendix~\ref{sec:Comparison-with-previous}.)

\appendix

\section{\label{sec:collapsing-colors-proof}A proof of Proposition~\ref{prop:sigma-cheq}}

In this appendix we prove Proposition~\ref{prop:sigma-cheq}. First,
let us prove that the two chain complexes of Remark~\ref{rem:change-basis-1}
are $R_{\boldsymbol{X}}$-isomorphic. We work in a more general setting,
over an additive $\mathbb{F}$-linear category $\mathsf{A}$.
\begin{lem}[Change of basis]
\label{lem:change-basis}Let $(C,\rho_{C})\in K_{R_{\boldsymbol{X}}}^{-}(\mathsf{A})$
be a $R_{\boldsymbol{X}}$-chain complex, and let $(\overline{C},\rho_{\overline{C}})\in K_{R_{\boldsymbol{X}}}^{-}(\mathsf{A})$
be its preferred free $R_{\boldsymbol{X}}$-resolution (Definition~\ref{def:preferred-resolution}).
Recall that $\overline{C}=C\otimes_{\mathbb{F}}R_{\boldsymbol{X}}\otimes_{\mathbb{F}}\Xi_{\boldsymbol{X}}$
and its differential is $\partial_{\overline{C}}:=\partial+\phi$
where 
\[
\partial:=\partial_{C}\otimes\mathrm{Id}_{R_{\boldsymbol{X}}}\otimes\mathrm{Id}_{\Xi_{\boldsymbol{X}}}+\sum_{x\in\boldsymbol{X}}\mathrm{Id}_{C}\otimes x\otimes\xi_{x},\ \phi:=\sum_{x\in\boldsymbol{X}}\rho_{C}(x)\otimes\mathrm{Id}_{R_{\boldsymbol{X}}}\otimes\xi_{x}.
\]
Let $\overline{C}'$ be the $R_{\boldsymbol{X}}$-chain complex with
underlying object $C\otimes_{\mathbb{F}}R_{\boldsymbol{X}}\otimes_{\mathbb{F}}\Xi_{\boldsymbol{X}}$,
differential $\partial_{\overline{C}'}:=\partial$, and $R_{\boldsymbol{X}}$-action
$\rho_{\overline{C}'}$ given by 
\[
\rho_{\overline{C}'}(x):=\rho_{C}(x)\otimes\mathrm{Id}_{R_{\boldsymbol{X}}}\otimes\mathrm{Id}_{\Xi_{\boldsymbol{X}}}+\mathrm{Id}_{C}\otimes x\otimes\mathrm{Id}_{\Xi_{\boldsymbol{X}}}
\]
for $x\in\boldsymbol{X}$.

Define an $\mathbb{F}$-linear map $\psi:C\otimes_{\mathbb{F}}R_{\boldsymbol{X}}\otimes_{\mathbb{F}}\Xi_{\boldsymbol{X}}\to C\otimes_{\mathbb{F}}R_{\boldsymbol{X}}\otimes_{\mathbb{F}}\Xi_{\boldsymbol{X}}$
as follows. For $\boldsymbol{Z}\subset\boldsymbol{X}$, let $x_{\boldsymbol{Z}}:=\prod_{z\in\boldsymbol{Z}}x_{z}$.
As an $\mathbb{F}$-vector space, $R_{\boldsymbol{X}}$ has basis
$\{x_{\boldsymbol{Z}}\}_{\boldsymbol{Z}\subset\boldsymbol{X}}$; hence
$C\otimes_{\mathbb{F}}R_{\boldsymbol{X}}\otimes_{\mathbb{F}}\Xi_{\boldsymbol{X}}$
splits as $\bigoplus_{\boldsymbol{Z}\subset\boldsymbol{X}}C\otimes_{\mathbb{F}}x_{\boldsymbol{Z}}\mathbb{F}\otimes_{\mathbb{F}}\Xi_{\boldsymbol{X}}$.
The $C\otimes_{\mathbb{F}}x_{\boldsymbol{Z}}\mathbb{F}\otimes_{\mathbb{F}}\Xi_{\boldsymbol{X}}\to C\otimes_{\mathbb{F}}R_{\boldsymbol{X}}\otimes_{\mathbb{F}}\Xi_{\boldsymbol{X}}$
component of $\psi$ is the composition 
\[
C\otimes_{\mathbb{F}}x_{\boldsymbol{Z}}\mathbb{F}\otimes_{\mathbb{F}}\Xi_{\boldsymbol{X}}\xrightarrow{\mathrm{Id}_{C}\otimes x_{\boldsymbol{Z}}^{-1}\otimes\mathrm{Id}_{\Xi_{\boldsymbol{X}}}}C\otimes_{\mathbb{F}}1\mathbb{F}\otimes_{\mathbb{F}}\Xi_{\boldsymbol{X}}\xrightarrow{\rho_{\overline{C}'}(x_{\boldsymbol{Z}})}C\otimes_{\mathbb{F}}R_{\boldsymbol{X}}\otimes_{\mathbb{F}}\Xi_{\boldsymbol{X}},
\]
where the first map is given by the $\mathbb{F}$-linear isomorphism
between $x_{\boldsymbol{Z}}\mathbb{F}$ and $1\mathbb{F}$ which we
denote as $x_{\boldsymbol{Z}}^{-1}$.

Then, $\psi$ induces $R_{\boldsymbol{X}}$-chain isomorphisms $(\overline{C},\rho_{\overline{C}})\to(\overline{C}',\rho_{\overline{C}'})$
and $(\overline{C}',\rho_{\overline{C}'})\to(\overline{C},\rho_{\overline{C}})$.
In fact, it is its own inverse.
\end{lem}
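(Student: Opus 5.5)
We need to show three things about $\psi$: (i) $\psi\circ\psi=\mathrm{Id}$; (ii) $\psi$ intertwines the two $R_{\boldsymbol{X}}$-actions in both directions; (iii) $\psi$ intertwines the two differentials in both directions, i.e.\ $\psi\circ\partial_{\overline{C}}=\partial\circ\psi$. By (i), it suffices to prove (ii) and (iii) in one direction each.

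\textbf{Closed formula for $\psi$.} Since $\mathbb{F}$ has characteristic $2$ and the $\rho_{C}(x)$ commute with each other and square to zero, expanding $\rho_{\overline{C}'}(x_{\boldsymbol{Z}})=\prod_{z\in\boldsymbol{Z}}(\rho_{C}(z)\otimes1+1\otimes z)$ gives
\[
\psi(c\otimes x_{\boldsymbol{Z}}\otimes\xi)=\sum_{\boldsymbol{W}\subset\boldsymbol{Z}}\rho_{C}(x_{\boldsymbol{Z}\setminus\boldsymbol{W}})c\otimes x_{\boldsymbol{W}}\otimes\xi .
\]
Equivalently, $\psi=\prod_{x\in\boldsymbol{X}}(1+\rho_{C}(x)\otimes\partial_{x})$, where $\partial_{x}$ is the $\mathbb{F}$-linear ``derivative'' on $R_{\boldsymbol{X}}$ sending $x_{\boldsymbol{Z}}\mapsto x_{\boldsymbol{Z}\setminus\{x\}}$ if $x\in\boldsymbol{Z}$ and to $0$ otherwise. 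These factors commute. Each factor squares to $1+\rho_{C}(x)^{2}\otimes\partial_{x}^{2}=1$, because $\rho_{C}(x)^{2}=0$ and we are in characteristic $2$. This gives (i).

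\textbf{Intertwining the actions.} We check $\psi\circ(\mathrm{Id}\otimes y\otimes\mathrm{Id})=\rho_{\overline{C}'}(y)\circ\psi$ on a basis element $c\otimes x_{\boldsymbol{Z}}\otimes\xi$.
\begin{itemize}
\item If $y\in\boldsymbol{Z}$, the left side vanishes. The right side is $\rho_{\overline{C}'}(y)\rho_{\overline{C}'}(x_{\boldsymbol{Z}})(c\otimes1\otimes\xi)$, which contains the factor $\rho_{\overline{C}'}(y)^{2}=\rho_{C}(y)^{2}\otimes1+1\otimes y^{2}=0$, so it also vanishes.
\item If $y\notin\boldsymbol{Z}$, both sides equal $\rho_{\overline{C}'}(x_{\boldsymbol{Z}\cup\{y\}})(c\otimes1\otimes\xi)$ directly from the definition.
\end{itemize}

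\textbf{Intertwining the differentials.} This is the main step, though it is still a direct calculation; I would use the product form of $\psi$ to keep it short.
\begin{itemize}
\item $\partial_{C}\otimes1\otimes1$ commutes with $\psi$, since $\partial_{C}$ commutes with every $\rho_{C}(x)$ (the $\rho_{C}(x)$ are chain maps).
\item Using the commutation relation $[\partial_{x},\,y\cdot]=\delta_{xy}(1-x\partial_{x})$ on $R_{\boldsymbol{X}}$ (multiplication by $y$ and $\partial_{x}$ commute for $y\neq x$), each factor $(1+\rho_{C}(x)\otimes\partial_{x})$ commutes with $1\otimes y\otimes\xi_{y}$ for $y\neq x$. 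Hence
\[
\psi\circ(1\otimes x\otimes\xi_{x})\circ\psi=(1\otimes x\otimes\xi_{x})+\rho_{C}(x)\otimes(\partial_{x}x+x\partial_{x})\otimes\xi_{x}+(\text{terms with }\rho_{C}(x)^{2}).
\]
\item In characteristic $2$ we have $\partial_{x}x+x\partial_{x}=\mathrm{Id}$ on $R_{\boldsymbol{X}}$, and the last terms vanish because $\rho_{C}(x)^{2}=0$.
\end{itemize}
Therefore
\[
\psi\,\partial\,\psi=\partial+\sum_{x}\rho_{C}(x)\otimes1\otimes\xi_{x}=\partial+\phi=\partial_{\overline{C}},
\]
and by $\psi^{2}=1$ this is exactly $\partial\circ\psi=\psi\circ\partial_{\overline{C}}$. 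This gives (iii).

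Since $\psi$ is an involution, (ii) and (iii) in the reverse direction follow immediately, and so $\psi$ gives $R_{\boldsymbol{X}}$-chain isomorphisms in both directions, each being the inverse of the other.
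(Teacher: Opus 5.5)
Your proof is correct, but it is organized differently from the paper's.

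The paper works directly from the definition of $\psi$ through $\rho_{\overline{C}'}$:
\begin{enumerate}
\item It proves $\psi^{2}=\mathrm{Id}$ by computing the $x_{\boldsymbol{Z}}\to x_{\boldsymbol{Y}}$ components of $\psi$, namely $\rho_{C}(x_{\boldsymbol{Z}\setminus\boldsymbol{Y}})$ for $\boldsymbol{Y}\subset\boldsymbol{Z}$. It then observes that the $2^{|\boldsymbol{Z}\setminus\boldsymbol{Y}|}$ intermediate terms cancel mod $2$ unless $\boldsymbol{Y}=\boldsymbol{Z}$.
\item It checks $R_{\boldsymbol{X}}$-linearity exactly as you do.
\item For the chain-map property, it only verifies $\partial_{\overline{C}'}\psi=\psi\partial_{\overline{C}}$ on the summand $C\otimes_{\mathbb{F}}1\mathbb{F}\otimes_{\mathbb{F}}\Xi_{\boldsymbol{X}}$, calling this straightforward. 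It then propagates to every $C\otimes_{\mathbb{F}}x_{\boldsymbol{Z}}\mathbb{F}\otimes_{\mathbb{F}}\Xi_{\boldsymbol{X}}$ using $R_{\boldsymbol{X}}$-linearity of $\psi$ and of both differentials. This is why it first checks separately that $\overline{C}'$ is an $R_{\boldsymbol{X}}$-chain complex.
\end{enumerate}

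Your factorization $\psi=\prod_{x}(1+\rho_{C}(x)\otimes\partial_{x})$ into commuting involutions is a cleaner repackaging of that binomial count. Your global conjugation identity $\psi\,\partial\,\psi=\partial+\phi$ is driven by $\partial_{x}x+x\partial_{x}=\mathrm{Id}$ and by $\partial_{x}$ commuting with multiplication by $y\neq x$. It makes explicit the computation the paper leaves to the reader, and it removes the need for the propagation step.

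It also gives you something for free that you did not state but should. Since $\partial=\psi\,\partial_{\overline{C}}\,\psi$ and $\rho_{\overline{C}'}(y)=\psi\,\rho_{\overline{C}}(y)\,\psi$, two facts follow by transport of structure from $\overline{C}$: $\partial^{2}=0$, and $\partial$ commutes with $\rho_{\overline{C}'}$. Together these say that $\overline{C}'$ really is an $R_{\boldsymbol{X}}$-chain complex, as the statement asserts.

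The paper's route has one advantage: its ``check on the $x_{\emptyset}$-summand, then extend by $R_{\boldsymbol{X}}$-linearity'' trick is reused immediately afterwards to prove $\psi_{\boldsymbol{W}}\overline{\sigma}\psi_{\boldsymbol{X}}=\overline{\sigma}$.

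One small slip: your stated relation $[\partial_{x},x\cdot]=1-x\partial_{x}$ is not right. The correct identity is $\partial_{x}\circ x=1-x\partial_{x}$, so the commutator (which equals the anticommutator in characteristic $2$) is $\mathrm{Id}$. What you actually use is $\partial_{x}x+x\partial_{x}=\mathrm{Id}$, which is correct, so nothing downstream is affected.
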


\begin{proof}
First, let us check that $\overline{C}'$ is an $R_{\boldsymbol{X}}$-chain
complex. It is straightforward to check $\partial^{2}=0$. To check
$R_{\boldsymbol{X}}$-linearity, it is sufficient to check $\partial\rho_{\overline{C}'}(x)=\rho_{\overline{C'}}(x)\partial$
for all $x\in\boldsymbol{X}$, which follows from $\partial_{C}\rho_{C}(x)=\rho_{C}(x)\partial_{C}.$%

Now, let us check that $\psi^{2}=\mathrm{Id}_{C}\otimes\mathrm{Id}_{R_{\boldsymbol{X}}}\otimes\mathrm{Id}_{\Xi_{\boldsymbol{X}}}$.
For this, let us study the $C\otimes_{\mathbb{F}}x_{\boldsymbol{Z}}\mathbb{F}\otimes_{\mathbb{F}}\Xi_{\boldsymbol{X}}\to C\otimes_{\mathbb{F}}x_{\boldsymbol{Y}}\mathbb{F}\otimes_{\mathbb{F}}\Xi_{\boldsymbol{X}}$
component of $\psi$. Under the identification $x_{\boldsymbol{Z}}\mathbb{F}\cong1\mathbb{F}\cong x_{\boldsymbol{Y}}\mathbb{F}$
as $\mathbb{F}$-vector spaces, we view this as a map $C\otimes_{\mathbb{F}}1\mathbb{F}\otimes_{\mathbb{F}}\Xi_{\boldsymbol{X}}\to C\otimes_{\mathbb{F}}1\mathbb{F}\otimes_{\mathbb{F}}\Xi_{\boldsymbol{X}}$.
This map is $0$ if $\boldsymbol{Y}\not\subset\boldsymbol{Z}$, and
is $\rho_{C}(x_{\boldsymbol{Z}\setminus\boldsymbol{Y}})\otimes\mathrm{Id}_{1\mathbb{F}}\otimes\mathrm{Id}_{\Xi_{\boldsymbol{X}}}$
otherwise. Hence, the $C\otimes_{\mathbb{F}}x_{\boldsymbol{Z}}\mathbb{F}\otimes_{\mathbb{F}}\Xi_{\boldsymbol{X}}\to C\otimes_{\mathbb{F}}x_{\boldsymbol{Y}}\mathbb{F}\otimes_{\mathbb{F}}\Xi_{\boldsymbol{X}}$
component of $\psi^{2}$ is $0$ if $\boldsymbol{Y}\not\subset\boldsymbol{Z}$,
and is otherwise (similarly viewed as $C\otimes_{\mathbb{F}}1\mathbb{F}\otimes_{\mathbb{F}}\Xi_{\boldsymbol{X}}\to C\otimes_{\mathbb{F}}1\mathbb{F}\otimes_{\mathbb{F}}\Xi_{\boldsymbol{X}}$)
\[
\sum_{\boldsymbol{Y}\subset\boldsymbol{Y}'\subset\boldsymbol{Z}}\rho_{C}(x_{\boldsymbol{Y}'\setminus\boldsymbol{Y}})\rho_{C}(x_{\boldsymbol{Z}\setminus\boldsymbol{Y}'})\otimes\mathrm{Id}_{1\mathbb{F}}\otimes\mathrm{Id}_{\Xi_{\boldsymbol{X}}}=2^{|\boldsymbol{Z}\setminus\boldsymbol{Y}|}\rho_{C}(x_{\boldsymbol{Z}\setminus\boldsymbol{Y}})\otimes\mathrm{Id}_{1\mathbb{F}}\otimes\mathrm{Id}_{\Xi_{\boldsymbol{X}}}=\begin{cases}
\mathrm{Id} & \mathrm{if\ }\boldsymbol{Y}=\boldsymbol{Z}\\
0 & {\rm otherwise}
\end{cases}.
\]

Finally, let us check that $\psi:(\overline{C},\rho_{\overline{C}})\to(\overline{C}',\rho_{\overline{C}'})$
is (1) $R_{\boldsymbol{X}}$-linear, and (2) a chain map. From this
and the previous paragraph, that $\psi:(\overline{C}',\rho_{\overline{C}'})\to(\overline{C},\rho_{\overline{C}})$
is an $R_{\boldsymbol{X}}$-linear chain map and that they are isomorphisms
follow automatically.

For (1), it is sufficient to check $\psi\rho_{\overline{C}}(x)=\rho_{\overline{C}'}(x)\psi$
for all $x\in\boldsymbol{X}$. Let us check this on each summand $C\otimes_{\mathbb{F}}x_{\boldsymbol{Z}}\mathbb{F}\otimes_{\mathbb{F}}\Xi_{\boldsymbol{X}}$
separately. If $xx_{\boldsymbol{Z}}\neq0$, then 
\begin{align*}
\rho_{\overline{C}'}(x)\psi & =\rho_{\overline{C}'}(x)\rho_{\overline{C}'}(x_{\boldsymbol{Z}})(\mathrm{Id}_{C}\otimes x_{\boldsymbol{Z}}^{-1}\otimes\mathrm{Id}_{\Xi_{\boldsymbol{X}}})\\
 & =\rho_{\overline{C}'}(xx_{\boldsymbol{Z}})(\mathrm{Id}_{C}\otimes(xx_{\boldsymbol{Z}})^{-1}\otimes\mathrm{Id}_{\Xi_{\boldsymbol{X}}})(\mathrm{Id}_{C}\otimes x\otimes\mathrm{Id}_{\Xi_{\boldsymbol{X}}})=\psi\rho_{\overline{C}}(x).
\end{align*}
If $xx_{\boldsymbol{Z}}=0$, then they both vanish.

For (2), we would like to check $\partial_{\overline{C}'}\psi=\psi\partial_{\overline{C}}$.
First, it is straightforward to check that they agree on $C\otimes_{\mathbb{F}}1\mathbb{F}\otimes_{\mathbb{F}}\Xi_{\boldsymbol{X}}$.
Given this, they agree on $C\otimes_{\mathbb{F}}x_{\boldsymbol{Z}}\mathbb{F}\otimes_{\mathbb{F}}\Xi_{\boldsymbol{X}}$
since on $C\otimes_{\mathbb{F}}1\mathbb{F}\otimes_{\mathbb{F}}\Xi_{\boldsymbol{X}}$
we have (the first and fourth equalities follow from the previous
paragraph)
\[
\partial_{\overline{C}'}\psi\rho_{\overline{C}}(x_{\boldsymbol{Z}})=\partial_{\overline{C}'}\rho_{\overline{C}'}(x_{\boldsymbol{Z}})\psi=\rho_{\overline{C}'}(x_{\boldsymbol{Z}})\partial_{\overline{C}'}\psi=\rho_{\overline{C}'}(x_{\boldsymbol{Z}})\psi\partial_{\overline{C}}=\psi\rho_{\overline{C}}(x_{\boldsymbol{Z}})\partial_{\overline{C}}=\psi\partial_{\overline{C}}\rho_{\overline{C}}(x_{\boldsymbol{Z}})
\]
and $\rho_{\overline{C}}(x_{\boldsymbol{Z}})$ maps $C\otimes_{\mathbb{F}}1\mathbb{F}\otimes_{\mathbb{F}}\Xi_{\boldsymbol{X}}$
isomorphically to $C\otimes_{\mathbb{F}}x_{\boldsymbol{Z}}\mathbb{F}\otimes_{\mathbb{F}}\Xi_{\boldsymbol{X}}$.
\end{proof}
Let us use the notations of Definition~\ref{def:sigma-color}. First
we record the following equations.
\begin{gather}
\xi_{w}\circ\Xi_{\sigma}=\Xi_{\sigma}\circ\xi_{\sigma(w)},\label{eq:xi-identity}\\
R_{\sigma}\circ x=R_{\sigma}(x)\circ R_{\sigma}=\sum_{\sigma(w)=x}w\circ R_{\sigma}.\label{eq:r-identity}
\end{gather}

Let us first show that $\overline{\sigma}:\overline{C}_{\boldsymbol{X}}\to\overline{C}_{\boldsymbol{W}}$
is an $R_{\boldsymbol{X}}$-linear chain map. That it is $R_{\boldsymbol{X}}$-linear
follows from Equation~(\ref{eq:r-identity}). Let us check that it
is a chain map.
\begin{align}
\partial_{\overline{C}_{\boldsymbol{W}}}\overline{\sigma} & =\partial_{C}\otimes R_{\sigma}\otimes\Xi_{\sigma}+\sum_{w\in\boldsymbol{W}}(\rho_{C}(w)\otimes R_{\sigma}+\mathrm{Id}_{C}\otimes(wR_{\sigma}))\otimes(\xi_{w}\Xi_{\sigma})\label{eq:sigma-chain}\\
 & =\partial_{C}\otimes R_{\sigma}\otimes\Xi_{\sigma}+\sum_{x\in\boldsymbol{X}}\left(\sum_{\sigma(w)=x}(\rho_{C}(w)\otimes R_{\sigma}+\mathrm{Id}_{C}\otimes(wR_{\sigma}))\right)\otimes(\Xi_{\sigma}\xi_{x})\nonumber \\
 & =\partial_{C}\otimes R_{\sigma}\otimes\Xi_{\sigma}+\sum_{x\in\boldsymbol{X}}(\rho_{C}(R_{\sigma}(x))\otimes R_{\sigma}+\mathrm{Id}_{C}\otimes(R_{\sigma}x))\otimes(\Xi_{\sigma}\xi_{x})=\overline{\sigma}\partial_{\overline{C}_{\boldsymbol{X}}}.\nonumber 
\end{align}
Here the second equality follows from Equation~(\ref{eq:xi-identity}).

Note that $R_{\sigma}\otimes\Xi_{\sigma}:F_{\boldsymbol{X}}\to F_{\boldsymbol{W}}$
is an $R_{\boldsymbol{X}}$-chain homotopy equivalence, since $R_{\sigma}\otimes\Xi_{\sigma}$
is a quasi-isomorphism and $F_{\boldsymbol{X}},F_{\boldsymbol{W}}$
are $R_{\boldsymbol{X}}$-free. Let $G:F_{\boldsymbol{W}}\to F_{\boldsymbol{X}}$
be an $R_{\boldsymbol{X}}$-homotopy inverse; one may hope that $\mathrm{Id}_{C}\otimes G:\overline{C}_{\boldsymbol{W}}\to\overline{C}_{\boldsymbol{X}}$
is an $R_{\boldsymbol{X}}$-homotopy inverse of $\overline{\sigma}$.
However, $\mathrm{Id}_{C}\otimes G$ is a priori not even a chain
map. The trick is to use Lemma~\ref{lem:change-basis} and perform
a change of basis.

Denote as $(\overline{C}_{\boldsymbol{W}}',\rho_{\overline{C}_{\boldsymbol{W}}'})$
(resp.\ $(\overline{C}_{\boldsymbol{X}}',\rho_{\overline{C}_{\boldsymbol{X}}'})$)
the chain complex defined as in Lemma~\ref{lem:change-basis} for
$\overline{C}_{\boldsymbol{W}}$ (resp.\ $\overline{C}_{\boldsymbol{X}}$),
and denote as $\psi_{\boldsymbol{W}}:C\otimes_{\mathbb{F}}R_{\boldsymbol{W}}\otimes_{\mathbb{F}}\Xi_{\boldsymbol{W}}\to C\otimes_{\mathbb{F}}R_{\boldsymbol{W}}\otimes_{\mathbb{F}}\Xi_{\boldsymbol{W}}$
(resp.\ $\psi_{\boldsymbol{X}}:C\otimes_{\mathbb{F}}R_{\boldsymbol{X}}\otimes_{\mathbb{F}}\Xi_{\boldsymbol{X}}\to C\otimes_{\mathbb{F}}R_{\boldsymbol{X}}\otimes_{\mathbb{F}}\Xi_{\boldsymbol{X}}$)
the corresponding isomorphism.

Interestingly, $\overline{\sigma}:=\mathrm{Id}_{C}\otimes R_{\sigma}\otimes\Xi_{\sigma}:\overline{C}_{\boldsymbol{X}}'\to\overline{C}_{\boldsymbol{W}}'$
is also an $R_{\boldsymbol{X}}$-linear chain map. To check that it
is $R_{\boldsymbol{X}}$-linear, it is sufficient to check $\overline{\sigma}\rho_{\overline{C}_{\boldsymbol{X}}'}(x)=\rho_{\overline{C}_{\boldsymbol{W}}'}(R_{\sigma}(x))\overline{\sigma}$
for all $x\in\boldsymbol{X}$. We have
\begin{align*}
\overline{\sigma}\rho_{\overline{C}_{\boldsymbol{X}}'}(x) & =\rho_{C}(R_{\sigma}(x))\otimes R_{\sigma}\otimes\Xi_{\sigma}+\mathrm{Id}_{C}\otimes R_{\sigma}x\otimes\Xi_{\sigma}\\
 & =\rho_{C}(R_{\sigma}(x))\otimes R_{\sigma}\otimes\Xi_{\sigma}+\mathrm{Id}_{C}\otimes R_{\sigma}(x)R_{\sigma}\otimes\Xi_{\sigma}=\rho_{\overline{C}_{\boldsymbol{W}}'}(R_{\sigma}(x))\overline{\sigma}
\end{align*}
where the second equality follows from Equation~(\ref{eq:r-identity}).
That it is a chain map is similar to Equation~(\ref{eq:sigma-chain})
but simpler:
\begin{align*}
\partial_{\overline{C}_{\boldsymbol{W}}'}\overline{\sigma} & =\partial_{C}\otimes R_{\sigma}\otimes\Xi_{\sigma}+\sum_{w\in\boldsymbol{W}}\mathrm{Id}_{C}\otimes(wR_{\sigma})\otimes(\xi_{w}\Xi_{\sigma})\\
 & =\partial_{C}\otimes R_{\sigma}\otimes\Xi_{\sigma}+\sum_{x\in\boldsymbol{X}}\left(\sum_{\sigma(w)=x}\mathrm{Id}_{C}\otimes(wR_{\sigma})\right)\otimes(\Xi_{\sigma}\xi_{x})\\
 & =\partial_{C}\otimes R_{\sigma}\otimes\Xi_{\sigma}+\sum_{x\in\boldsymbol{X}}\mathrm{Id}_{C}\otimes(R_{\sigma}x)\otimes(\Xi_{\sigma}\xi_{x})=\overline{\sigma}\partial_{\overline{C}_{\boldsymbol{X}}'}.
\end{align*}

We claim that $\psi_{\boldsymbol{W}}\overline{\sigma}\psi_{\boldsymbol{X}}=\overline{\sigma}$.
It is easy to show that they agree on $C\otimes_{\mathbb{F}}1\mathbb{F}\otimes_{\mathbb{F}}\Xi_{\boldsymbol{X}}$.
Given this, on $C\otimes_{\mathbb{F}}1\mathbb{F}\otimes_{\mathbb{F}}\Xi_{\boldsymbol{X}}$
we have for any $\boldsymbol{Z}\subset\boldsymbol{X}$
\begin{multline*}
\psi_{\boldsymbol{W}}\overline{\sigma}\psi_{\boldsymbol{X}}\rho_{\overline{C}_{\boldsymbol{X}}'}(x_{\boldsymbol{Z}})=\psi_{\boldsymbol{W}}\overline{\sigma}\rho_{\overline{C}_{\boldsymbol{X}}}(x_{\boldsymbol{Z}})\psi_{\boldsymbol{X}}=\psi_{\boldsymbol{W}}\rho_{\overline{C}_{\boldsymbol{W}}}(x_{\boldsymbol{Z}})\overline{\sigma}\psi_{\boldsymbol{X}}\\
=\rho_{\overline{C}_{\boldsymbol{W}}'}(x_{\boldsymbol{Z}})\psi_{\boldsymbol{W}}\overline{\sigma}\psi_{\boldsymbol{X}}=\rho_{\overline{C}_{\boldsymbol{W}}'}(x_{\boldsymbol{Z}})\overline{\sigma}=\overline{\sigma}\rho_{\overline{C}_{\boldsymbol{X}}'}(x_{\boldsymbol{Z}}),
\end{multline*}
and so $\psi_{\boldsymbol{W}}\overline{\sigma}\psi_{\boldsymbol{X}}=\overline{\sigma}$
on $C\otimes_{\mathbb{F}}x_{\boldsymbol{Z}}\mathbb{F}\otimes_{\mathbb{F}}\Xi_{\boldsymbol{X}}$.

Let us record the following simple lemma.
\begin{lem}
\label{lem:simple-appendix}Let $D_{1},D_{2}\in Ch^{-}(\mathsf{Mod}_{R_{\boldsymbol{X}}})$
be chain complexes of $R_{\boldsymbol{X}}$-modules, and let $f:D_{1}\to D_{2}$
be an $R_{\boldsymbol{X}}$-linear map. Let $(C,\rho_{C})\in K_{R_{\boldsymbol{X}}}^{-}(\mathsf{A})$
be an $R_{\boldsymbol{X}}$-chain complex over $\mathsf{A}$. Equip
$C\otimes_{\mathbb{F}}D_{i}$ with the $R_{\boldsymbol{X}}$-action
$\rho_{i}$ given by $\rho_{i}(x):=\rho_{C}(x)\otimes\mathrm{Id}_{D_{i}}+\mathrm{Id}_{C}\otimes x$
for $x\in\boldsymbol{X}$. Then, the map $\mathrm{Id}_{C}\otimes f:C\otimes_{\mathbb{F}}D_{1}\to C\otimes_{\mathbb{F}}D_{2}$
is $R_{\boldsymbol{X}}$-linear.
\end{lem}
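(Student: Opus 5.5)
It suffices to check the defining relation of $R_{\boldsymbol{X}}$-linearity on generators. Since $R_{\boldsymbol{X}}$ is generated as an $\mathbb{F}$-algebra by the elements $x\in\boldsymbol{X}$, and both $\rho_{1},\rho_{2}$ are ring homomorphisms (they are, because $\rho_{C}(x)\otimes\mathrm{Id}$ and $\mathrm{Id}\otimes x$ commute and each squares to zero; I take this as given from the setup, as the statement equips $C\otimes_{\mathbb{F}}D_{i}$ with these actions), I only need to show
\[
(\mathrm{Id}_{C}\otimes f)\circ\rho_{1}(x)=\rho_{2}(x)\circ(\mathrm{Id}_{C}\otimes f)
\]
for each $x\in\boldsymbol{X}$. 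Indeed, if $\mathrm{Id}_{C}\otimes f$ intertwines the generators, it intertwines all products and sums of them, hence every $r\in R_{\boldsymbol{X}}$.

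Expanding the left-hand side gives $\rho_{C}(x)\otimes f+\mathrm{Id}_{C}\otimes(f\circ x)$. Expanding the right-hand side gives $\rho_{C}(x)\otimes f+\mathrm{Id}_{C}\otimes(x\circ f)$. These use only the interchange law $(a\otimes b)(c\otimes d)=ac\otimes bd$ for tensor products of morphisms over $\mathbb{F}$ in the setting of objects of $\mathsf{A}$ tensored with $\mathbb{F}$-modules, which holds just as in the additive enlargement conventions of Subsection~\ref{subsec:Bar-Natan's-tangle-invariant}.

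The first terms agree trivially. The second terms agree because $f$ is $R_{\boldsymbol{X}}$-linear, so $f\circ x=x\circ f$ as maps $D_{1}\to D_{2}$. This completes the verification.

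There is no real obstacle. The only point deserving a sentence is making sense of $C\otimes_{\mathbb{F}}D_{i}$ when $D_{i}$ is a possibly infinite chain complex of $\mathbb{F}$-modules and $C$ lives in $\mathsf{A}$. I would handle this degreewise, as in the preferred resolutions, where $D_{i}$ is a direct sum of graded pieces, and note that the identity is checked componentwise on each $C\otimes V$ summand. Notably, no chain-map property of $f$ is needed; the lemma concerns only linearity.
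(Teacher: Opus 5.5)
Your proposal is correct and follows the paper's proof. The paper checks $(\mathrm{Id}_{C}\otimes f)\rho_{1}(x)=\rho_{2}(x)(\mathrm{Id}_{C}\otimes f)$ on the generators $x\in\boldsymbol{X}$ and calls it immediate; your expansion of both sides, using $f\circ x=x\circ f$, supplies exactly that omitted detail. Your remark that $f$ need not be a chain map is also apt, since the paper later applies the lemma to the homotopies $H,H'$.
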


\begin{proof}
We check $(\mathrm{Id}_{C}\otimes f)\rho_{1}(x)=\rho_{2}(x)(\mathrm{Id}_{C}\otimes f)$
for all $x\in\boldsymbol{X}$, which is immediate.
\end{proof}
Now, let $F:=R_{\sigma}\otimes\Xi_{\sigma}:F_{\boldsymbol{X}}\to F_{\boldsymbol{W}}$,
let $G:F_{\boldsymbol{W}}\to F_{\boldsymbol{X}}$ be an $R_{\boldsymbol{X}}$-homotopy
inverse of $F$, and let $H,H'$ be $R_{\boldsymbol{X}}$-linear maps
such that $[\partial_{F_{\boldsymbol{X}}},H]=GF+\mathrm{Id}$ and
$[\partial_{F_{\boldsymbol{W}}},H']=FG+\mathrm{Id}$. Then, consider
the maps $\overline{\sigma}=\mathrm{Id}_{C}\otimes F:\overline{C}_{\boldsymbol{X}}'\to\overline{C}_{\boldsymbol{W}}'$,
$\mathrm{Id}_{C}\otimes G:\overline{C}_{\boldsymbol{W}}'\to\overline{C}_{\boldsymbol{X}}'$,
$\mathrm{Id}_{C}\otimes H:\overline{C}_{\boldsymbol{X}}'\to\overline{C}_{\boldsymbol{X}}'$,
and $\mathrm{Id}_{C}\otimes H':\overline{C}_{\boldsymbol{W}}'\to\overline{C}_{\boldsymbol{W}}'$;
they are all $R_{\boldsymbol{X}}$-linear by Lemma~\ref{lem:simple-appendix}.
Furthermore, $[\partial_{\overline{C}_{\boldsymbol{X}}'},\mathrm{Id}_{C}\otimes H]=(\mathrm{Id}_{C}\otimes G)(\mathrm{Id}_{C}\otimes F)+\mathrm{Id}_{\overline{C}_{\boldsymbol{X}}'}$,
and $[\partial_{\overline{C}_{\boldsymbol{W}}'},\mathrm{Id}_{C}\otimes H']=(\mathrm{Id}_{C}\otimes F)(\mathrm{Id}_{C}\otimes G)+\mathrm{Id}_{\overline{C}_{\boldsymbol{W}}'}$.

Hence, $\psi_{\boldsymbol{X}}(\mathrm{Id}_{C}\otimes G)\psi_{\boldsymbol{W}}$
is an $R_{\boldsymbol{X}}$-homotopy inverse of $\overline{\sigma}$,
and $\psi_{\boldsymbol{X}}(\mathrm{Id}_{C}\otimes H)\psi_{\boldsymbol{X}}$
and $\psi_{\boldsymbol{W}}(\mathrm{Id}_{C}\otimes H')\psi_{\boldsymbol{W}}$
are the homotopies. This completes the proof of Proposition~\ref{prop:sigma-cheq}.

\section{\label{sec:Comparison-with-previous}Comparison with previous works}

In this appendix we do some bookkeeping to show that our Khovanov
decorated link cobordism maps extend Baldwin-Levine-Sarkar and Lipshitz-Sarkar's
theories \cite{MR3604486,MR4521052} into TQFTs. In particular, we
prove Theorem~\ref{thm:enhanced-khovanov}~(\ref{enu:For-the-decorated}).

\subsection{Pointed Khovanov homology}

Let us recall the definition of pointed Khovanov homology of Baldwin,
Levine, and Sarkar \cite{MR3604486} for generic $\boldsymbol{X}$-pointed
links $(L,\vec{\boldsymbol{p}})$ (they consider the case where all
the $\boldsymbol{p}_{x}$'s are singletons, but this will not be important).
They first define a chain complex $K_{\boldsymbol{X}}:=\bigotimes_{x\in\boldsymbol{X}}(R_{\boldsymbol{X}}\xrightarrow{x}\xi_{x}R_{\boldsymbol{X}})$
where the tensor product is taken over $R_{\boldsymbol{X}}$ and $\xi_{x}$
is a formal variable of bidegree $(1,2)$. Let $\Lambda_{\boldsymbol{X}}:=\Lambda^{\ast}(\{\xi_{x}\}_{x\in\boldsymbol{X}})$
be the exterior algebra\footnote{Since we are in characteristic $2$, we have $\Lambda_{\boldsymbol{X}}=R_{\boldsymbol{X}}$.}
on generators $\{\xi_{x}\}_{x\in\boldsymbol{X}}$; then $K_{\boldsymbol{X}}$
is a $\Lambda_{\boldsymbol{X}}$-chain complex. The\emph{ pointed
Khovanov chain complex} is the $\Lambda_{\boldsymbol{X}}$-chain complex
$DKh(L,\vec{\boldsymbol{p}})\otimes_{R_{\boldsymbol{X}}}K_{\boldsymbol{X}}$.

Let us explain how to define decorated link cobordism maps for $DKh(L,\vec{\boldsymbol{p}})\otimes_{R_{\boldsymbol{X}}}K_{\boldsymbol{X}}$.
If $(\Sigma,\vec{A}):(L,\vec{\boldsymbol{p}})\to(L',\vec{\boldsymbol{p}}')$
is an $\boldsymbol{X}$-decorated link cobordism, then our $R_{\boldsymbol{X}}$-linear
Khovanov link cobordism chain map $\overline{D}Kh(\Sigma,\vec{A}):\overline{D}Kh(L,\vec{\boldsymbol{p}})\to\overline{D}Kh(L',\vec{\boldsymbol{p}}')$
induces a $\Lambda_{\boldsymbol{X}}$-linear chain map 
\begin{equation}
\overline{D}Kh(\Sigma,\vec{A})\otimes\mathrm{Id}_{K_{\boldsymbol{X}}}:\overline{D}Kh(L,\vec{\boldsymbol{p}})\otimes_{R_{\boldsymbol{X}}}K_{\boldsymbol{X}}\to\overline{D}Kh(L',\vec{\boldsymbol{p}}')\otimes_{R_{\boldsymbol{X}}}K_{\boldsymbol{X}}.\label{eq:dkhkx}
\end{equation}
Recall from Definition~\ref{def:preferred} the $R_{\boldsymbol{X}}$-quasi-isomorphism
$q_{DKh(L,\vec{\boldsymbol{p}})}:\overline{D}Kh(L,\vec{\boldsymbol{p}})\to DKh(L,\vec{\boldsymbol{p}})$;
this induces an $\Lambda_{\boldsymbol{X}}$-quasi-isomorphism 
\begin{equation}
q_{DKh(L,\vec{\boldsymbol{p}})}\otimes\mathrm{Id}_{K_{\boldsymbol{X}}}:\overline{D}Kh(L,\vec{\boldsymbol{p}})\otimes_{R_{\boldsymbol{X}}}K_{\boldsymbol{X}}\to DKh(L,\vec{\boldsymbol{p}})\otimes_{R_{\boldsymbol{X}}}K_{\boldsymbol{X}}.\label{eq:qiso-kx}
\end{equation}

Since these chain complexes are $\Lambda_{\boldsymbol{X}}$-free,
Equation~(\ref{eq:qiso-kx}) is a $\Lambda_{\boldsymbol{X}}$-chain
homotopy equivalence. Let us define (although this is unnecessary)
a $\Lambda_{\boldsymbol{X}}$-homotopy inverse $f:DKh(L,\vec{\boldsymbol{p}})\otimes_{R_{\boldsymbol{X}}}K_{\boldsymbol{X}}\to\overline{D}Kh(L,\vec{\boldsymbol{p}})\otimes_{R_{\boldsymbol{X}}}K_{\boldsymbol{X}}$
of Equation~(\ref{eq:qiso-kx}) as follows. For $\boldsymbol{Z}\subset\boldsymbol{X}$,
let $\xi_{\boldsymbol{Z}}:=\prod_{z\in\boldsymbol{Z}}\xi_{z}$. For
$d\in DKh(L,\vec{\boldsymbol{p}})$, define
\[
f(d\otimes\xi_{\boldsymbol{Z}}):=\sum_{\boldsymbol{Y}\subset\boldsymbol{X}\setminus\boldsymbol{Z}}(d\otimes1\otimes\xi_{\boldsymbol{Y}}^{-1})\otimes\xi_{\boldsymbol{Y}\sqcup\boldsymbol{Z}}.
\]
One can check that this is a $\Lambda_{\boldsymbol{X}}$-chain map
and that $(q_{DKh(L,\vec{\boldsymbol{p}})}\otimes\mathrm{Id}_{K_{\boldsymbol{X}}})\circ f=\mathrm{Id}_{DKh(L,\vec{\boldsymbol{p}})\otimes_{R_{\boldsymbol{X}}}K_{\boldsymbol{X}}}$.

Define the decorated link cobordism map for $(\Sigma,\vec{A})$ on
the pointed Khovanov chain complex as the $\Lambda_{\boldsymbol{X}}$-chain
map 
\begin{equation}
(q_{DKh(L,\vec{\boldsymbol{p}})}\otimes\mathrm{Id}_{K_{\boldsymbol{X}}})\circ(\overline{D}Kh(\Sigma,\vec{A})\otimes\mathrm{Id}_{K_{\boldsymbol{X}}})\circ f:DKh(L,\vec{\boldsymbol{p}})\otimes_{R_{\boldsymbol{X}}}K_{\boldsymbol{X}}\to DKh(L',\vec{\boldsymbol{p}}')\otimes_{R_{\boldsymbol{X}}}K_{\boldsymbol{X}}.\label{eq:pointed-map}
\end{equation}

Baldwin, Levine, and Sarkar show \cite[Proposition~2.9]{MR3604486}
that if $(L',\vec{\boldsymbol{p}'})$ is a generic $\boldsymbol{X}$-pointed
link that is isotopic to $(L,\vec{\boldsymbol{p}})$, then $DKh(L,\vec{\boldsymbol{p}})\otimes_{R_{\boldsymbol{X}}}K_{\boldsymbol{X}}$
and $DKh(L',\vec{\boldsymbol{p}}')\otimes_{R_{\boldsymbol{X}}}K_{\boldsymbol{X}}$
are $\Lambda_{\boldsymbol{X}}$-chain homotopy equivalent by constructing
maps for Reidemeister and slide movies. It is straightforward to check
that their maps agree with Equation~(\ref{eq:pointed-map}) for Reidemeister
and slide movies $(\Sigma,\vec{A})$.

\subsection{Khovanov homology as an $A_{\infty}$-module}

Lipshitz and Sarkar \cite[Theorem~4.2]{MR4521052} consider generic
$\boldsymbol{X}$-pointed links $(L,\vec{\boldsymbol{p}})$ for $|\boldsymbol{X}|=2$;
let $\boldsymbol{X}=\{w,x\}$. (Also, $\boldsymbol{p}_{w}$ and $\boldsymbol{p}_{x}$
are singletons, but this will not be important.) They show that if
$(L',\vec{\boldsymbol{p}'})$ is also such a generic $\boldsymbol{X}$-pointed
link that is isotopic to $(L,\vec{\boldsymbol{p}})$, then $DKh(L,\vec{\boldsymbol{p}})$
and $DKh(L',\vec{\boldsymbol{p}'})$ are $R_{\boldsymbol{X}}$-quasi-isomorphic,
by considering them as $A_{\infty}$-bimodules over $\mathbb{F}[w]/(w^{2})$
and $\mathbb{F}[x]/(x^{2})$ (with trivial higher actions), and defining
an $A_{\infty}$-homotopy equivalence between them.

We are left to compare our map with theirs for slide movies (that
the Reidemeister movies give the same maps are clear). For this, we
specify our preferred $A_{\infty}$-homotopy inverse $f:DKh(L,\vec{\boldsymbol{p}})\to\overline{D}Kh(L,\vec{\boldsymbol{p}})$
of our preferred $R_{\boldsymbol{X}}$-quasi-isomorphism $q_{DKh(L,\vec{\boldsymbol{p}})}:\overline{D}Kh(L,\vec{\boldsymbol{p}})\to DKh(L,\vec{\boldsymbol{p}})$
from Definition~\ref{def:preferred}. Let
\[
f_{m,1,n}:(\mathbb{F}[w]/(w^{2}))^{\otimes m}\otimes DKh(L,\vec{\boldsymbol{p}})\otimes(\mathbb{F}[x]/(x^{2}))^{\otimes n}\to\overline{D}Kh(L,\vec{\boldsymbol{p}})
\]
be such that for $w_{i}\in\{1,w\}$, $x_{j}\in\{1,x\}$, and $d\in DKh(L,\vec{\boldsymbol{p}})$,
we have
\[
f_{m,1,n}(w_{1}\otimes\cdots\otimes w_{m}\otimes d\otimes x_{1}\otimes\cdots\otimes x_{n})=\begin{cases}
d\otimes1\otimes\xi_{w}^{-m}\xi_{x}^{-n} & \mathrm{if}\ \forall i\ w_{i}=w,\ \forall j\ x_{j}=x\\
0 & \mathrm{otherwise}
\end{cases}.
\]

Now, we can check directly that if $(\Sigma,\vec{A}):(L,\vec{\boldsymbol{p}})\to(L',\vec{\boldsymbol{p}}')$
is a slide movie, then $q_{DKh(L,\vec{\boldsymbol{p}}')}\circ\overline{D}Kh(\Sigma,\vec{A})\circ f$
agrees with Lipshitz and Sarkar's $A_{\infty}$-homotopy equivalence
$DKh(L,\vec{\boldsymbol{p}})\to DKh(L',\vec{\boldsymbol{p}'})$.

\bibliographystyle{amsalpha}
\bibliography{/Users/gheehyun/Documents/writings/bib}

\end{document}